\documentclass[10pt]{article}
\usepackage{comment}
\usepackage{amsmath}
\usepackage{amsthm}
\usepackage{amssymb}
\usepackage{multicol}
\usepackage[active]{srcltx}
\usepackage{algorithm}
\usepackage{array}
\usepackage[dvipsnames,svgnames]{xcolor}
\usepackage{url}
\usepackage{enumitem}
\usepackage{tikz}
\usepackage{graphicx}
\usepackage{subcaption} 
\usepackage{pgfplots}
\pgfplotsset{compat=1.17} % <-- Add this line
\usepackage{tabularx}
\usepackage{booktabs} % for better tables
\usepackage{xcolor}
\DeclareMathOperator{\Prox}{Prox}
\usepackage{comment}
 \usepackage[final, colorlinks = true,
            linkcolor = blue,
            urlcolor  = blue,
            citecolor = blue,
            anchorcolor = blue]{hyperref}
 \usepackage[a4paper,margin=1in,footskip=0.2in]{geometry}          
\providecommand{\U}[1]{\protect \rule{.1in}{.1in}}
\newtheorem{theorem}{Theorem}

\newtheorem{corollary}[theorem]{Corollary}

\newtheorem{definition}[theorem]{Definition}
\newtheorem{example}[theorem]{Example}

\newtheorem{lemma}[theorem]{Lemma}

\newtheorem{proposition}[theorem]{Proposition}
\newtheorem{remark}[theorem]{Remark}

\newcommand{\R}{\mathbb{R}}
\newcommand{\dom}{\operatorname{dom}}
\newcommand{\prox}{\operatorname{Prox}}

\newcommand{\argmin}{\operatorname{argmin}}

\usepackage{orcidlink} 

\usepackage{pgfplots}
\pgfplotsset{compat=1.17}
\usepackage{tikz}

\title{Weakly Quasar-Convex Optimization}%: Theory and Algorithm} 
\author{
Abbas Khademi\orcidlink{0000-0002-8276-6821}
  \thanks{Department of Decision Sciences, HEC Montr\'{e}al, Montr\'{e}al, H3T 2A7, Qu\'{e}bec, Canada.\\ E-mail: abbaskhademi92@gmail.com; abbas.khademi@hec.ca}
  \and
Felipe Lara\orcidlink{0000-0002-9965-0921}
  \thanks{Instituto de Alta Investigaci\'on (IAI), Universidad de Tarapac\'a, Arica, Chile.\\ E-mail: felipelaraobreque@gmail.com; flarao@academicos.uta.cl. Web: felipelara.cl }
}
\date{\today}
\begin{document}
\maketitle

%_____________________%
%     abstract        %
%_____________________%
\begin{abstract}
\noindent 
\small 
This paper introduces the class of weakly quasar-convex functions, a notion that simultaneously generalizes weak convexity (and hence convexity) and quasar-convexity. We establish the theoretical foundation for this new class by developing its basic calculus, deriving characterizations, and systematically investigating its closure and structural properties. Building on these tools, we analyze the proximal point algorithm for minimizing a weakly quasar-convex function under local quadratic growth, proving $Q$-linear convergence of the iterates and $R$-linear convergence of the function values to a global minimizer, with
iteration complexity $O(\ln(\varepsilon^{-1}))$; we also give a fully computable, minimizer-free stopping rule via the proximal residual with matching complexity. Finally, we construct a weakly quasar-convex function, satisfying all our standing assumptions, that is neither weakly convex,
quasi-convex, star quasi-convex, nor quasar-convex for any modulus, showing that our convergence theory applies to instances outside every prior framework in the literature.

\medskip

%_____________________%
%     Keywords        %
%_____________________%
\noindent{\small \emph{Keywords}: Nonconvex optimization; Quasar-convexity; Moreau envelope; Proximal point method; Structured nonconvexity.}

\medskip
%________________%
%     MSC        %
%________________%
\noindent {\bf Mathematics Subject Classification: 90C26, 49J52, 49M37.} 
\end{abstract}
%%==========================%%

%%==========================%%
%%       Introduction       %%
%%==========================%%
\section{Introduction}\label{sec:Introduction}

Many of the most important problems in modern optimization give rise to highly nonconvex objectives, including deep neural network training \cite{liang2022revisiting}, low-rank matrix recovery \cite{li2025low,rahimi2026projected}, phase retrieval \cite{liu2026solving},  robust statistics \cite{la2026robot}, quantum process tomography \cite{quiroga2023using}, and dynamical systems \cite{gao2026inertial,hardt2018gradient}. Since arbitrary nonconvexity rules out general-purpose guarantees, attention has turned to structured nonconvexity, which is weaker than convexity yet strong enough to support rigorous analysis of stochastic gradient methods \cite{sadiev2024stochastic}, momentum schemes \cite{chen2024distributed,khanh2026convergence}, and proximal point methods \cite{grad2024relaxed}. Identifying which structural assumptions yield which guarantees remains an active area of research \cite{cartis2022evaluation}. This paper contributes to this effort by introducing a unified framework that bridges the prominent notions of weak convexity and quasar-convexity.

Identifying tractable problem classes between convexity and full nonconvexity is a central question in optimization. One prominent example is star-convexity, in which a global minimizer remains visible from every point of the domain \cite{NP-2006}. A further relaxation, quasar-convexity (originally called weak quasi-convexity), was proposed in the study of linear dynamical systems \cite{hardt2018gradient}. For a quasar-convex function, the negative gradient always points toward a region of smaller objective value, and this observation has led to near-optimal algorithms for star-convex functions and their extensions \cite{hinder2020near}. In recent years, algorithmic developments based on quasar-convexity have attracted considerable attention \cite{ahookhosh2026quasar,gbaguidi2026accelerated,khademi2025adaptive,nandhan2026boosted,nguyen2026projected,valdivia2025convergence}; see the recent survey \cite{khademi2026convexity} for a broad taxonomy of quasar-convexity and related concepts.

Weak convexity occupies a different corner of this landscape. Dating back to early work in nonsmooth optimization \cite{nurminskii1973quasigradient}, it is characterized by the fact that adding a sufficiently large quadratic penalty to the objective renders it convex. This permits a controlled amount of nonconvexity while preserving enough structure for proximal and subgradient methods, together with smoothing techniques for composite and stochastic problems, to retain rigorous convergence guarantees \cite{davis2019proximally,goujon2024learning,khanh2025inexact,lopez2026projected,silveti2026frank,van2024weak}.

Weak convexity and quasar-convexity describe different forms of
structure in nonconvex optimization. Weak convexity controls
nonconvexity through a uniform quadratic curvature bound, whereas
quasar-convexity imposes a geometric condition relative to a global
minimizer. These complementary viewpoints motivate a common framework
that accommodates both types of structure. We introduce
\emph{weakly quasar-convex} functions through a parametrized inequality
anchored at a global minimizer. This class includes weakly convex functions  and quasar-convex functions, while also admitting functions
outside both classes. We investigate its structural properties and
identify conditions under which it supports convergence of
the proximal point algorithm.

Our contributions are both theoretical and algorithmic and can be
summarized as follows. First, we introduce the class of weakly
quasar-convex functions, defined via a parametrized inequality anchored
at a global minimizer that simultaneously generalizes weak convexity and quasar-convexity, and we prove that this generalization is strict via explicit examples that fail to be weakly convex, quasi-convex, or
quasar-convex. Second, we develop the basic calculus of this new class, together with first-order characterizations, and show that, under a local quadratic growth condition, weak quasar-convexity rules out spurious stationary points and implies a restricted secant inequality (also called star quasiconvexity in \cite{PQK-L}). Furthermore, we prove that the Moreau envelope preserves weak quasar-convexity, with an explicit modulus formula, yielding coercivity and characterization results for the associated proximity operator. Finally, we analyze the proximal point algorithm under local quadratic growth, proving $Q$-linear convergence of the iterates and $R$-linear convergence of the function values to a global minimizer; we illustrate this theory on an example lying outside every prior framework in the literature, where no other existing convergence result applies.

The remainder of the paper is organized as follows. Section~\ref{sec:Preliminaries} recalls notation and preliminary background. Section~\ref{sec:wqc} introduces weakly quasar-convex functions and develops their theoretical foundations, detailing their basic calculus, characterizations, preservation under the Moreau envelope, and the properties of the associated proximity operator. Section~\ref{sec:PPM} establishes convergence of the proximal point algorithm under local quadratic growth. Section~\ref{sec:Conclusion} concludes the paper.

%---------

%%==========================%%
%%     Preliminaries        %%
%%==========================%%
\section{Preliminaries and Basic Definitions}\label{sec:Preliminaries}

The inner product in $\mathbb{R}^{n}$ and the Euclidean norm are denoted by $\langle \cdot,\cdot \rangle$ and $\| \cdot \|$, respectively. We denote  $\mathbb{R}_{+} := [0, + \infty[$, $\mathbb{R}_{++} :=   ]0, + \infty[$, $\overline{\mathbb{R}} := \mathbb{R} \cup\{\pm \infty\}$, and $\mathbb{N}_{0} := \mathbb{N} \cup \{0\}$. 

Given any $x,y,z\in\mathbb{R}^{n}$ and any $\beta \in \mathbb{R}$, the following relations hold
\begin{equation}\label{id:3points}
\langle x-z,y-x \rangle=\frac{1}{2}\|z-y\|^{2}-\frac{1}{2}\|x-z\|^{2}-\frac{1}{2}\|y-x\|^{2},
\end{equation}
\begin{equation}\label{iden:1}
 \|\beta x+(1-\beta)y\|^{2} = \beta\|x\|^{2} + (1-\beta)\|y\|^{2} - \beta(1-\beta) \|x-y\|^{2}.
\end{equation}

Given any extended-valued function $h: \mathbb{R}^{n} \rightarrow \overline{\mathbb{R}}$, the effective domain of $h$ is defined by $\dom(h) := \{x \in \mathbb{R}^{n}: ~h(x) < + \infty \}$. It is said that $h$ is proper if $\dom(h)$ is nonempty and $h(x) > - \infty$ for all $x \in \mathbb{R}^{n}$. The notion of properness is important when dealing with minimization problems.
The set ${\rm epi} (h) := \{(x,t) \in \mathbb{R}^{n} \times \mathbb{R}: ~h(x) \leq t\}$ is the epigraph of $h$, %$S_{\delta} (h) := \{x \in \mathbb{R}^{n}: h(x) \leq \delta\}$ is the sublevel set of $h$ at the height $\delta \in \mathbb{R}$, 
and  $\argmin_{ \mathbb{R}^{n}} h$ is the set of all minimal points of $h$. A function $h$ is lower semicontinuous at $\overline{x} \in \mathbb{R}^{n}$ if for any sequence $\{x^k\}_{k} \subseteq \mathbb{R}^{n}$ with $x^k \rightarrow \overline{x}$, $h(\overline{x}) \leq \liminf_{k \rightarrow + \infty} h(x^k)$. %Furthermore, the current convention $\sup_{\emptyset} h := - \infty$ and $\inf_{\emptyset} h := + \infty$ is adopted.

For $h \colon \mathbb{R}^n \to \mathbb{R}$, $\mathcal{K} \subseteq \mathbb{R}^n$, $\overline{x} \in \mathcal{K}$, 
and $\zeta > 0$, the $\zeta$-\textit{level set} of $h$ over $\mathcal{K}$ at $\overline{x}$ is
$$\mathcal{L}_{\zeta}^h(\mathcal{K},\overline{x}) := \bigl\{x \in \mathcal{K} :~ h(x) \leq h(\overline{x}) + \zeta \bigr\}.$$

A function $h$ satisfies a {$\mu$-local Quadratic Growth (QG) condition} with parameter $\zeta>0$ if
\begin{equation}\label{eq:QG}
 h(x)-h(\overline{x}) \geq \tfrac{\mu}{2}\|x-\overline{x}\|^2, ~ \forall  x\in \mathcal{L}_{\zeta}^h (\mathbb{R}^n,\overline{x}). 
\end{equation}
In the literature, there exist several interesting examples of functions that satisfy the (local) quadratic growth condition (e.g., \cite[Example~E.5]{takahashi2026fast}).

Let $h: \mathbb{R}^n \to \mathbb{R}$ be a differentiable function and let $\overline{x} \in \argmin_{\mathbb{R}^n} h$. A point $x \in \mathbb{R}^n$ is called a spurious stationary point of $h$ (with respect to $\overline{x}$) if $\nabla h(x) = 0$ but $x \neq \overline{x}$.

Let us recall the following generalized convexity notions, which have been introduced and developed in virtue of its interesting properties in machine learning and continuous optimization (see \cite{GGK,hinder2020near,NP-2006,WW}).

\medskip

Let $h: \mathbb{R}^{n} \rightarrow \overline{\mathbb{R}}$ be a proper function with $\overline{x} \in \argmin_{\mathbb{R}^{n}}h$. Then $h$ is said to be:
\begin{itemize}
 \item[$(i)$] (Strongly) star-convex \cite{NP-2006} if there exists $\rho \geq 0$ such that for every $y \in \dom(h)$ and every $\lambda \in [0, 1]$, we have
 \begin{equation*}\label{star:convex}
  h(\lambda \overline{x} + (1-\lambda)y) \leq \lambda h(\overline{x}) + (1 - \lambda) h(y) - \lambda (1 - \lambda) \tfrac{\rho}{2} \| y - \overline{x} \|^{2}.
 \end{equation*}

 \item[$(ii)$] (Strongly) quasar-convex \cite{GGK,hinder2020near} if there exists $\gamma \in  ]0, 1]$ and $\rho \geq 0$ such that for every $y \in \dom(h)$, we have
 \begin{equation*}%\label{eq:qconvex}
  h(\lambda \overline{x} + (1-\lambda)y) \leq \lambda \gamma h(\overline{x}) + (1 - \lambda \gamma) h(y) - \lambda \left(1 - \tfrac{\lambda}{2 - \gamma} \right) \tfrac{\gamma \rho}{2} \| y - \overline{x} \|^{2}. \notag
 \end{equation*}
\end{itemize}
Every (strongly) star-convex function is (strongly) quasar-convex, while the reverse statement does not hold in general (see \cite[Example 8]{de2025extending} with $0<p<1$).

Furthermore, it follows from the definition that $\argmin_{\mathbb{R}^{n}}h$ is a singleton when $h$ is strongly ($\rho > 0$) quasar-convex (resp. star-convex). Moreover, if $\gamma = 1$, then a quasar-convex function reduces to a strongly star-convex function ($\rho > 0$) and star-convex function ($\rho = 0$).

In the differentiable case, we have the following useful characterization for (strongly) quasar-convex functions. Let $\mathcal{K} \subseteq \mathbb{R}^{n}$ be a 
convex and open set and $h: \mathbb{R}^{n} \rightarrow \mathbb{R}$ be a differentiable function with $\argmin_{\mathcal{K}}h \neq \emptyset$. Then $h$ 
is (strongly) quasar-convex with modulus $\gamma \in  ]0, 1]$ and $\rho \geq 0$ on $\mathcal{K}$ if and only if (see \cite[Lemma 10]{hinder2020near})
\begin{equation}\label{char:qconvex}
 h(\overline{x}) \geq h(y) + \tfrac{1}{\gamma} \langle \nabla h(y), \overline{x} - y \rangle + \tfrac{\rho}{2} \| y - \overline{x} \|^{2}, ~ \forall  y \in \mathcal{K}.
\end{equation}

Given a nonempty, closed, and convex set $\mathcal{K} \subset \mathbb{R}^{n}$, the proximity operator on $\mathcal{K}$ of parameter $\beta > 0$ of a proper lower semicontinuous function $h: \mathbb{R}^{n} \rightarrow \overline{\mathbb{R}}$ at $z \in \mathbb{R}^{n}$ is defined as the operator ${\rm Prox}_{\beta h} (\mathcal{K}, \cdot): \mathbb{R}^{n} \rightrightarrows \mathbb{R}^{n}$ for which
\begin{equation}\label{prox:oper}
 {\rm Prox}_{\beta h} (\mathcal{K}, z) := \argmin_{x \in \mathcal{K}} \left\{ h(x) + \tfrac{1}{2 \beta} \Vert z - x \|^{2} \right\}.
\end{equation}
When $\mathcal{K} = \mathbb{R}^{n}$, we simply write ${\rm Prox}_{\beta h} (z) := {\rm Prox}_{\beta h} (\mathbb{R}^{n}, z)$. If $h$ is proper, lower semicontinuous, and convex, then ${\rm Prox}_{\beta h}$ turns out to be a single-valued operator (see, for instance, \cite[Proposition 12.15]{BC-2}). Closely related is the \textit{Moreau envelope} $h^{\beta} \colon \mathbb{R}^n \to \mathbb{R}$ of $h$ with parameter $\beta > 0$, defined by
\begin{equation}
 h^{\beta}(x) := \inf_{y \in \mathbb{R}^n}\left\{ h(y) + \tfrac{1}{2\beta}\|x-y\|^2 \right\}.
\end{equation}

Let us recall some notions related to the Clarke directional derivative \cite{Clarke1990}. Let $h: \R^n \to \overline{\R}$ be proper and locally Lipschitz around $x\in\dom(h)$. Then the Clarke directional derivative of $h$ at $x$ in direction $d \in \mathbb{R}^n$ is defined by
\begin{equation*}
 h^\circ (x; d) = \mathop{\limsup} \limits_{ \substack{y \to x \\  t \downarrow 0}} \frac{h(y+td) - h(y)}{t},
\end{equation*}
 and the Clarke subdifferential of $h$ at $x$ is given by
 \begin{equation*}%\label{def:clarke} 
  \partial^{C} h(x) := \{v \in \mathbb R^n: ~  \langle v, d \rangle \leq h^\circ(x; d), ~ \forall  d \in \mathbb R^n\},
 \end{equation*} 
By convention, $\partial^C h(x):=\emptyset$ for $x\notin\dom(h)$. Moreover,
\begin{equation}\label{prop:direc}
 h^{\circ} (x; d) := \max_{\xi \in \partial^{C} h(x)} \langle \xi, d \rangle, ~ \forall  d \in \mathbb R^{n}.
\end{equation} 
 A point $x \in \dom(h)$ is called a Clarke critical point of $h$ if $0 \in \partial^{C} h(x)$.

\begin{definition} \label{def:q-r-linear}
 Let $\{x^k\}_{k} \subset \mathbb{R}^n$ be a sequence converging to $x^* \in \mathbb{R}^n$. It is said that:
\begin{enumerate}
 \item[$(a)$] $\{x^k\}_{k}$ converges $Q$-linearly to $x^*$ if there exists a constant $r \in   ]0, 1[$ such that for all sufficiently large $k$, we have
 \begin{equation} \label{eq:q-linear}
  \|x^{k+1} - x^*\| \leq r \|x^k - x^*\|.
 \end{equation}

 \item[$(b)$] $\{x^k\}_{k}$ converges $R$-linearly to $x^*$ if there exists a constant $r \in   ]0, 1[$ and a constant $C > 0$ such that for all sufficiently large $k$, we have
 \begin{equation}\label{eq:r-linear}
  \|x^k - x^*\| \leq C r^k.
 \end{equation}
\end{enumerate}
Clearly, $Q$-linear convergence implies $R$-linear convergence, but the converse is not 
true in general. 
\qed
\end{definition}

%%==========================%%
%%        Quasar            %%
%%==========================%%
\section{Weak Quasar-Convexity}\label{sec:wqc}

In this section, we introduce and develop the notions of weak quasar-convexity, a notion that extends weak convexity and quasar-convexity, simultaneously.

%________________________%
%     definition         %
%________________________%
\begin{definition}[Weakly Quasar-Convex]\label{def:wquasar-convex}
A function $h: \R^n \to \R$ is said to be $(\rho, \gamma)$-weakly quasar-convex with respect to $\overline{x} \in \argmin_{\R^n} h$ if for every $x \in \R^n$ and every $\lambda \in [0,1]$, we have
\begin{equation}\label{eq:quasar-wconvex}
 h(\lambda \overline{x} + (1-\lambda)x) \leq \gamma \lambda h(\overline{x}) + (1-\gamma \lambda) h(x) + \frac{\gamma\rho}{2} \lambda \left(1-\frac{ \lambda}{2 -\gamma}\right)\|x - \overline{x}\|^2,
 \end{equation}
where $\gamma \in ]0,1]$ and $\rho \geq 0$. 
\qed
\end{definition}

Definition \ref{def:wquasar-convex} can be extended to to any proper function $h: \mathbb{R}^n \to \overline{\mathbb{R}}$, with $\overline{x}\in \argmin_{\mathbb{R}^n}\,h$, once \eqref{eq:quasar-wconvex} is read under the usual conventions of extended arithmetic. In that setting, the inequality is automatically satisfied (and imposes no constraint) whenever $x \notin \dom(h)$, while for $x \in \dom(h)$ it forces $\lambda \overline{x} + (1-\lambda) x \in \dom(h)$ for every $\lambda \in [0, 1]$; that is, weak quasar-convexity forces $\dom(h)$ to be star-shaped with respect to $\overline{x}$.

In particular, when $\gamma = 1$, Definition \ref{def:wquasar-convex} reduces to $\rho$-weak star-convexity, that is,
\begin{equation}\label{eq:swconvex}
 h(\lambda \overline{x} + (1-\lambda)x) \leq \lambda h(\overline{x}) + (1-\lambda)h(x) + \frac{\rho}{2} \lambda(1-\lambda)\|x - \overline{x} \|^2, ~ \forall  x \in \mathbb{R}^{n}, ~ \forall  \lambda \in [0, 1].
\end{equation}
 These definitions generalize standard weak convexity by anchoring the relaxation to a specific minimizer $\overline{x}$ rather than requiring the property to hold for all pairs of points. When $\rho = 0$ and $\gamma = 1$, definitions recover standard star-convexity with respect to $\overline{x}$. When $\rho = 0$, Definition \ref{def:wquasar-convex} connects to quasar-convexity concepts from recent optimization literature \cite{de2025extending, hinder2020near, khademi2026convexity, WW}. Then the following relationship holds
\begin{align}
  \begin{array}{ccccccc}
  {\rm Convex} & \overset{\star}{\Rightarrow}  &  {\rm \text{Quasar-Convex}} \notag \\
  \Downarrow &   & \Downarrow  \notag \\
  {\rm Weakly ~ Convex} & \overset{\star}{\Rightarrow} & {\rm \text{Weakly Quasar-Convex}} \notag 
 \end{array}
\end{align}
%--------------------------%
where ``$\star$" denotes that $\argmin_{ \mathbb{R}^{n}} h \neq \emptyset$ is required.

All the reverse implications do not hold in general. Indeed, examples of quasar-convex and weakly convex functions which are not convex are well-known (see \cite{hinder2020near,NP-2006}). It remains to present functions which is weakly quasar-convex without being neither quasar-convex nor weakly convex for any modulus.

 To that end, we first present a function which is weakly quasar-convex, but it is not quasar-convex for any $\gamma \in  ]0, 1]$.
%________________________%
%       Example          %
%________________________%
\begin{example}\label{example:02}
 Let $\gamma = \tfrac{1}{2}$, $\rho = \tfrac{5}{2}$, and define the function $h : \mathbb{R} \to \mathbb{R}$ by
\begin{equation}\label{eq:example}
 h(x) =  x^6 - 2x^4 + \tfrac{11}{10}x^2 = x^2 \left( \left(x^2-1\right)^2 + \tfrac{1}{10}\right) \geq 0.
\end{equation}
Clearly, $h(x) \geq 0$ for all $x \in \mathbb{R}$, $h(0) = 0$, and $h(x) \to +\infty$ as $|x| \to \infty$, hence $\overline{x} = 0$ is a global minimizer of $h$ on $\mathbb{R}$.

Furthermore, $h$ is $\left(\tfrac{5}{2},\tfrac{1}{2}\right)$-weakly quasar-convex with respect to $\overline{x} = 0$. With $\overline{x}=0$ and $h(\overline{x})=0$, the condition 
\eqref{eq:quasar-wconvex} becomes, for all $x\in\mathbb{R}$ and 
$\lambda\in[0,1]$,
\begin{equation}\label{eq:wqc-to-prove}
    h\!\left((1-\lambda)x\right) 
    \leq \left(1-\tfrac{\lambda}{2}\right)h(x) 
    + \tfrac{5\lambda}{8}\!\left(1-\tfrac{2\lambda}{3}\right)x^2,
\end{equation}
where we used $\gamma=\tfrac{1}{2}$, $\rho=\tfrac{5}{2}$, 
$\tfrac{\gamma\rho}{2}=\tfrac{5}{8}$, and $\tfrac{\lambda}{2-\gamma}=\tfrac{2\lambda}{3}$. Define the gap function
\begin{equation*}
    \Delta(\lambda,x) 
    := \left(1-\tfrac{\lambda}{2}\right)h(x) 
    + \tfrac{5\lambda}{8}\!\left(1-\tfrac{2\lambda}{3}\right)x^2 
    - h\!\left((1-\lambda)x\right).
\end{equation*}
We must show $\Delta(\lambda,x)\geq 0$ for all 
$x\in\mathbb{R}$ and $\lambda\in[0,1]$. Indeed, substituting $h(x)=x^6-2x^4+\tfrac{11}{10}x^2$, we decompose
\begin{equation*}
 \Delta(\lambda,x) = A_6(\lambda) x^6 + A_4(\lambda) x^4 + A_2(\lambda) x^2,
\end{equation*}
where the coefficients for all $\lambda\in[0,1]$ are
\begin{align*}
    A_6(\lambda) &:= \left(1-\tfrac{\lambda}{2}\right) 
                  - (1-\lambda)^6 =-\lambda^6+6 \lambda^5-15 \lambda^4+20 \lambda^3-15 \lambda^2+\tfrac{11}{2}\lambda  \in\left[0,\tfrac{1}{2}+\tfrac{5}{12 \sqrt[5]{12}}\right], \\
    A_4(\lambda) &:= -2\left(1-\tfrac{\lambda}{2}\right) 
                  + 2(1-\lambda)^4 =2 \lambda^4-8 \lambda^3+12 \lambda^2-7 \lambda  \in\left[-\tfrac{11}{8},0\right], \\
    A_2(\lambda) &:= \tfrac{11}{10}\!\left(1-\tfrac{\lambda}{2}\right) + \tfrac{5\lambda}{8}\!\left(1-\tfrac{2\lambda}{3}\right)  - \tfrac{11}{10}(1-\lambda)^2= -\tfrac{91}{60}\lambda^2+\tfrac{91}{40}\lambda  \in\left[0,\tfrac{273}{320} \right].
\end{align*}
Since $x^2 \geq 0$, we can view $\Delta(\lambda,x)$ as a cubic polynomial in $u = x^2 \geq 0$,
\begin{equation*}
 \Delta(\lambda,x) = u \big( A_6(\lambda) u^2 + A_4 (\lambda) u + A_2(\lambda) \big).
\end{equation*}
To show $\Delta(\lambda,x) \geq 0$, it suffices to prove that the quadratic $Q(u) = A_6 (\lambda) u^2 + A_4 (\lambda) u + A_2(\lambda)$ is non-negative for all $u \geq 0$ and all $\lambda \in [0,1]$. Since $A_6 (\lambda) \ge 0$ and $A_4(\lambda) \leq 0$ for all $\lambda \in [0,1]$, consider the quadratic
\begin{equation*}
 Q(u) = A_6 (\lambda)u^2 + A_4 (\lambda)u + A_2(\lambda), ~ \forall  u \ge 0.
\end{equation*}

\noindent\textbf{Case $\lambda=0$.} Here $A_6 (\lambda) = A_4(\lambda) = A_2(\lambda) = 0$, so $Q\equiv 0$.

\noindent\textbf{Case $\lambda\in]0,1]$.} Then $A_6(\lambda) > 0$, so $Q$ is an upward-opening parabola with vertex at
\begin{equation*}
u^*=-\tfrac{A_4(\lambda)}{2A_6(\lambda)}\ge 0,
\end{equation*}
because $A_4\leq 0$. Thus the global minimum of $Q$ over $\mathbb{R}$ already occurs at $u^*\ge 0$, and
\begin{equation}\label{eq:claim1}
Q(u)\ge 0 \ \ \forall u\ge0 
\Longleftrightarrow 
\min_{\mathbb R} Q =Q(u^*)= A_2(\lambda)-\tfrac{A_4(\lambda)^2}{4A_6(\lambda)}\ge 0
\Longleftrightarrow 
A_4(\lambda)^2 \leq 4A_6(\lambda)A_2(\lambda). 
\end{equation}

Hence it suffices to prove $A_4(\lambda)^2 \leq 4A_6(\lambda)A_2(\lambda)$ for all $\lambda\in[0,1]$.
Direct expansion gives, for all $\lambda$, that
$A_6(\lambda)=\lambda B_6(\lambda)$, $A_4(\lambda) = \lambda B_4 (\lambda)$ and $A_2(\lambda) = \lambda B_2 (\lambda)$, where
\begin{align*}
 B_2 (\lambda) = \tfrac{91}{120}(3-2\lambda), ~ B_4 (\lambda)  = -7+12\lambda-8\lambda^2 + 2 \lambda^3, ~\text{and}~ B_6 (\lambda) = \tfrac{11}{2}-15\lambda + 20 \lambda^2 - 15 \lambda^3+6\lambda^4-\lambda^5.
\end{align*}
For $\lambda=0$, \eqref{eq:claim1} is trivial. For $\lambda>0$, \eqref{eq:claim1} is equivalent to
\begin{equation}\label{eq:claim2}
 B_4 (\lambda)^2 \leq 4B_6(\lambda) B_2 (\lambda), ~ \forall  \lambda\in [0,1]. 
\end{equation}
Expanding both sides, 
$$
\begin{aligned}
B_4(\lambda)^2 &= 4 \lambda^6 -32 \lambda^5 + 112 \lambda^4-220 \lambda^3 + 256 \lambda^2 - 168 \lambda+49, 
\\
4B_6 (\lambda) B_2 (\lambda) &= \tfrac{91}{15} \lambda^6 - \tfrac{91}{2} \lambda^5 + \tfrac{728}{5} \lambda^4- \tfrac{1547}{6} \lambda^3 + 273 \lambda^2 -\tfrac{2548}{15} \lambda + \tfrac{1001}{20}.
\end{aligned}
$$
Define
\begin{equation*}
P(\lambda):=60\big(4B_6(\lambda)B_2(\lambda)-B_4(\lambda)^2\big)=124\lambda^6-810\lambda^5+2016\lambda^4-2270\lambda^3+1020\lambda^2-112\lambda+63.
\end{equation*}
Since  \eqref{eq:claim2} is equivalent to $P(\lambda)\ge 0$, it remains to show that
\begin{equation}\label{eq:claim3}
 P(\lambda) \ge 0, ~ \forall  \lambda \in [0,1]. 
\end{equation}
Write $P$ in the Bernstein basis \cite{farouki2012bernstein} of degree $6$ on $[0,1]$,
\begin{equation*}
P(\lambda)=\sum_{k=0}^{6} b_k \binom{6}{k}\lambda^k(1-\lambda)^{6-k}.
\end{equation*}
The coefficients $b_k$ are obtained from the power-basis coefficients $a_i$ of $P$ via the standard conversion
\begin{equation*}
b_k=\sum_{i=0}^{k}\tfrac{\binom{k}{i}}{\binom{6}{i}} a_i, ~a_0=63,~ a_1=-112,~ a_2=1020,~ a_3=-2270,~ a_4=2016,~ a_5=-810,~ \text{and}~ a_6=124.
\end{equation*}
Carrying out this computation gives $b_0=63$,  $b_1 = \tfrac{133}{3}$, $b_2=\tfrac{281}{3}$, $b_3 = \tfrac{195}{2}$, $b_4=\tfrac{1151}{15}$, $b_5 = \tfrac{155}{3}$ and $b_6=31$. Since the Bernstein basis polynomials $\binom{6}{k}\lambda^k(1-\lambda)^{6 - k}$ are non-negative for $\lambda \in [0,1]$, we conclude
\begin{equation*}
 P(\lambda) = \sum_{k=0}^{6} b_k \binom{6}{k} \lambda^k (1-\lambda)^{6-k} \ge 0, ~ \forall  \lambda \in[0,1],
\end{equation*}
which establishes \eqref{eq:claim3}.
%-------------------------------
 
On the other hand, $h$ is not $\gamma$-quasar-convex at $\overline{x} = 0$ for any $\gamma \in  ]0,1]$. Indeed, if $h$ is $\gamma$-quasar-convex at $\overline{x}=0$ (with $h(\overline{x})=0$), then for all $x \in \mathbb{R}$ and all $\lambda \in [0,1]$, we should have $h \left((1-\lambda) x\right) \leq (1-\gamma \lambda) h(x)$. In particular, take $x = 1$ and $\lambda = \tfrac{1}{2}$. Then, $h(\tfrac{1}{2}) \leq \left( 1 - \tfrac{\gamma}{2} \right) h(1)$. Substituting $h(1) = \tfrac{32}{320}$ and $h(\tfrac{1}{2}) = \tfrac{53}{320}$, we get
 $$ \tfrac{53}{320} \leq \left(1-\tfrac{\gamma}{2} \right) \tfrac{32}{320} ~ \implies ~ 53 \leq 32 - 16 \gamma \implies \gamma \leq -\tfrac{21}{16},$$
 a contradiction. Therefore, $h$ is not quasar-convex for any $\gamma \in  ]0, 1]$.
%-----------------------------    
\qed
\end{example}
%------------end example--------------%

The following proposition will be useful.
%________________________%
%       proposition      %
%________________________%

\begin{proposition} \label{prop:wqcx-radial}
 Let $f: \mathbb{R} \to \mathbb{R}$ be a function with $0 \in \argmin_{\mathbb{R}} f$, $f(0)=0$, and let $g: \mathbb{R}^n \to \mathbb{R}$ be a function such that $1 \leq g \leq M$ for some constant $M > 0$. Define,
\begin{equation*}
     h(x) := \begin{cases}
        f(\|x\|) ~ g\left(\tfrac{x}{\|x\|}\right), &  x \in \mathbb{R}^n \setminus \{0\}\\
        0. & x=0
    \end{cases}
\end{equation*}
If $f$ is $(\rho, \gamma)$-weakly quasar-convex with respect to $0$, with $\rho\geq 0$ and $\gamma \in ]0,1]$, then $h$ is $(M \rho, \gamma)$-weakly quasar-convex with respect to $0$.
\end{proposition}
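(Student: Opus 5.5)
The plan is a direct verification of \eqref{eq:quasar-wconvex} for $h$ with $\overline{x}=0$, reducing it along each ray through the origin to the corresponding inequality for $f$. First I would check that $0\in\argmin_{\mathbb{R}^n} h$. Since $0\in\argmin_{\mathbb{R}} f$ and $f(0)=0$, we have $f\ge 0$ on $\mathbb{R}$. Together with $g\ge 1>0$, this gives $h(x)=f(\|x\|)\,g(x/\|x\|)\ge 0=h(0)$ for all $x$. Hence $h(\overline{x})=0$, and the target inequality becomes
\begin{equation*}
h((1-\lambda)x)\le (1-\gamma\lambda)h(x)+\tfrac{M\gamma\rho}{2}\lambda\left(1-\tfrac{\lambda}{2-\gamma}\right)\|x\|^2,\quad \forall x\in\mathbb{R}^n,\ \lambda\in[0,1].
\end{equation*}

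Next I would dispose of the degenerate cases. If $x=0$, both sides vanish. If $\lambda=1$, the left side is $h(0)=0$. The right side is nonnegative because $h(x)\ge0$, $1-\gamma\ge0$, and $1-\tfrac{1}{2-\gamma}\ge 0$ (as $2-\gamma\ge1$).

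For the main case $x\neq0$ and $\lambda\in[0,1[$, set $t=\|x\|>0$ and $u=x/\|x\|$. Since $1-\lambda>0$, the point $(1-\lambda)x$ has direction $u$ and norm $(1-\lambda)t$, so $h((1-\lambda)x)=f((1-\lambda)t)\,g(u)$. Applying the weak quasar-convexity of $f$ with respect to $0$ at the point $t$ gives
\begin{equation*}
f((1-\lambda)t)\le (1-\gamma\lambda)f(t)+\tfrac{\gamma\rho}{2}\lambda\left(1-\tfrac{\lambda}{2-\gamma}\right)t^2 .
\end{equation*}
Multiplying by $g(u)>0$ preserves the inequality and turns the first term into $(1-\gamma\lambda)h(x)$. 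For the quadratic term, the coefficient $\lambda\left(1-\tfrac{\lambda}{2-\gamma}\right)$ is nonnegative because $\lambda\le 1\le 2-\gamma$. We also have $\rho\ge0$, so we may bound $g(u)\le M$ and obtain the term $\tfrac{M\gamma\rho}{2}\lambda\left(1-\tfrac{\lambda}{2-\gamma}\right)\|x\|^2$. This yields the claim.

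I do not expect a serious obstacle. The only points needing care are the sign checks: positivity of $g$ when multiplying, and nonnegativity of the quadratic coefficient when replacing $g(u)$ by its upper bound $M$. The other is the observation that the direction $u$ is unchanged along the segment from $x$ to $0$ for $\lambda<1$, which is exactly what makes the radial reduction work.
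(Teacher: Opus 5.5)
Your proposal is correct and takes essentially the same route as the paper. You verify $0\in\argmin h$ from $f\ge 0$ and $g\ge 1$, reduce along the ray to the inequality for $f$ at $t=\|x\|$, multiply by $g(u)$, and bound $g(u)\le M$ using the nonnegativity of the quadratic coefficient. Your separate treatment of $x=0$ and $\lambda=1$ is slightly more careful than the paper, which applies $h((1-\lambda)y)=f((1-\lambda)\|y\|)\,g(y/\|y\|)$ without singling these cases out.
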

%------------proof--------------%
\begin{proof}
First, we verify that $0 \in \argmin_{\mathbb{R}^n} h$. Since $f(0)=0$ and $0 \in \argmin_{\mathbb{R}} f$, we have $f(t) \ge 0$ for all $t \ge 0$. Also, $g \ge 1 > 0$. Hence, for any $x \in \mathbb{R}^n$, we have
\begin{equation*}
h(x) = f(\|x\|) ~ g\left(\tfrac{x}{\|x\|}\right) \ge 0 = h(0),
\end{equation*}
with equality if and only if $f(\|x\|)=0$. Thus, $0 \in \argmin_{\mathbb{R}^n} h$.

\noindent
Now, fix $y \in \mathbb{R}^n$, $\lambda \in [0,1]$, and let $\overline{x} = 0$. Since $h(0)=0$ and $\overline{x}=0$,
\begin{equation*}
h(\lambda \overline{x} + (1-\lambda)y) = h((1-\lambda)y) = f((1-\lambda)\|y\|) ~ g\left(\tfrac{y}{\|y\|}\right).
\end{equation*}
Since $f$ is $(\rho, \gamma)$-weakly quasar-convex with respect to $0$, we have
\begin{equation*}
f((1-\lambda)\|y\|) \leq (1-\gamma\lambda) f(\|y\|) + \tfrac{\gamma\rho}{2}\lambda\left(1 - \tfrac{\lambda}{2-\gamma}\right)\|y\|^2.
\end{equation*}
Multiplying by $g(y/\|y\|) \ge 0$ and using $g \ge 1$, we obtain
\begin{align*}
 h((1-\lambda)y) & = f((1-\lambda)\|y\|) ~ g \left( \tfrac{y}{\|y\|}\right) \\
 & \leq (1-\gamma\lambda) f(\|y\|) ~ g\left( \tfrac{y}{\|y\|}\right) + \tfrac{\gamma\rho}{2} \lambda \left( 1 - \tfrac{\lambda}{2-\gamma}\right)\|y\|^2 ~ g \left(\tfrac{y}{\|y\|}\right) \\
 & = (1-\gamma\lambda) h(y) + \tfrac{\gamma\rho}{2} \lambda \left(1 - \tfrac{\lambda}{2-\gamma} \right) \|y\|^2 ~ g\left(\tfrac{y}{\|y\|}\right).
\end{align*}
Since $1 \leq g(y/\|y\|) \leq M$, we have $g\left( \tfrac{y}{\|y\|} \right) \leq M$. Thus,
\begin{equation*}
 h((1-\lambda)y) \leq (1- \gamma \lambda) h(y) + \tfrac{ \gamma (M\rho)}{2} \lambda\left(1 - \tfrac{ \lambda}{2 - \gamma} \right)\|y\|^2.
\end{equation*}
This is exactly the $(M\rho, \gamma)$-weakly quasar-convex inequality for $h$ with respect to $0$. Therefore, $h$ is $(M\rho, \gamma)$-weakly quasar-convex with respect to $0$.
\end{proof}
%------------end proof--------------%

Finally, we present the desired example.
%________________________%
%       Example          %
%________________________%
\begin{example}\label{exam:large} {\bf (Strictly Large)}
 Let $f: [-1,1] \to \mathbb{R}$ be the function given by
$$f(s) = 2-\sqrt{|s|},$$
and $g$ be the function of Example \ref{example:02}, i.e., $g(s) = s^2 \left((s^2-1)^2 + \tfrac{1}{ 10}\right) \geq 0$.

Clearly, $f$ is positive on $[-1,1]$, but $f$ is not weakly convex on $[-1,1]$ for any modulus, since the one-sided derivatives at $s=0$ are $\{\pm\infty\}$. Also, since $(s^2-1)^2+\tfrac{1}{10} \geq \tfrac{1}{10} > 0$ for all $s$, we have $g(s)=0$ if and only if $s=0$, so $0$ is the unique global minimizer of $g$ on $\mathbb{R}$. By Example \ref{example:02}, $g$ is $\left( \tfrac{5}{2}, \tfrac{1}{2} \right)$-weakly quasar-convex with respect to $0$, and $g$ is not $\gamma$-quasar-convex with respect to $0$ for any $\gamma \in ~ ]0,1]$.
Set
\begin{equation*}
 \mathcal{A} := \left\{(x_1,x_2)\in\mathbb{R}^2 ~ : ~ x_1 \geq 0,   -x_1 \leq x_2 \leq x_1 \right\},
\end{equation*}
and define the function $h : \mathcal{A}  \to \mathbb{R}$ by
\begin{equation}\label{the:function}
 h(x_1,x_2) :=
 \begin{cases}
    g(x_1) ~ f\left(\dfrac{x_2}{x_1} \right), & (x_1,x_2) \neq (0,0), \\
    0, & (x_1,x_2) = (0,0).
\end{cases}
\end{equation}

Since $g \geq 0$ and $f > 0$ on $[-1,1]$, we have $h \geq 0$ on $\mathcal{A}$. Moreover, for $(x_1,x_2)\in \mathcal{A}$ with $x_1 > 0$, $h(x_1,x_2) = g(x_1)f(x_2/x_1) > 0$ since $g(x_1)>0$ and $f>0$, and for $x_1=0$, the domain constraint of $\mathcal{A}$ forces $x_2=0$. Hence $\overline{x}=(0,0)$ is the unique global minimizer of $h$ on $\mathcal{A}$, i.e., $\{(0, 0)\} = \argmin_\mathcal{A}~h$. Moreover, along the ray $x_2=0$, $h(t,0) = f(0) g(t) = 2g(t) \to 0$ as $t \downarrow 0$, and more generally, since $f$ is bounded on $[-1, 1]$ and $g(x_1) \to g(0)=0$ as $x_1\downarrow 0$,
$$h(x_1,x_2) = g(x_1)~f\left(\frac{x_2}{x_1} \right) \to 0$$
uniformly over $x_2/x_1 \in [-1,1]$ as $x_1 \downarrow 0$, thus $h$ is continuous at the origin.
We claim that $h$ is weakly quasar-convex with respect to $(0,0)$, but is neither weakly convex nor quasi-convex nor quasar-convex on $\mathcal{A}$ for any modulus. Indeed, let us prove this point by point:

\medskip
\noindent
\emph{(i) $h$ is weakly quasar-convex with respect to $(0,0)$:} On any ray through the origin, the ratio $x_2/x_1$ is constant, say $x_2/x_1=c \in [-1,1]$, so restricted to that ray, $h(t,ct) = f(c)~g(t)$ is a positive scalar multiple of $g$, with $f(c) \in [1,2]$. Since $g$ is $\left(\tfrac{5}{2}, \tfrac{1}{2} \right)$-weakly quasar-convex with respect to $0$, multiplying  \eqref{eq:quasar-wconvex} (with $g(0)=0$) by the positive constant $f(c)$ preserves $\gamma = \tfrac{1}{2}$ while scaling $\rho$ by $f(c)$, yielding
\begin{equation*}
h\big((1-\lambda)x_1, (1-\lambda)x_2\big) \leq \left(1-\tfrac{\lambda}{2}\right)h(x_1,x_2) + \frac{\tfrac{1}{2} \cdot f(c) \cdot \tfrac{5}{2}}{2} \lambda\left(1-\tfrac{2\lambda}{3}\right) x_1^2.
\end{equation*}
Since $x = (x_1, c x_1)$, we have $\|x\|^2 = x_1^2(1+c^2) \geq x_1^2$. Using $x_1^2 \leq \|x\|^2$ together with $f(c) \leq 2$, we get $f(c) \cdot \tfrac{5}{2} \leq 5$, which gives
\begin{equation*}
h\big((1-\lambda)x\big) \leq \left(1-\tfrac{\lambda}{2}\right)h(x) + \frac{\tfrac{1}{2} \cdot 5}{2} \lambda\left(1-\tfrac{2\lambda}{3}\right) \|x\|^2.
\end{equation*}
Thus, $h$ is $\left(5, \tfrac{1}{2} \right)$-weakly quasar-convex with respect to $(0, 0)$ on $\mathcal{A}$.

\medskip
\noindent
\emph{(ii) $h$ is not weakly convex on $\mathcal{A}$ for any modulus:} Fix $\rho\geq 0$ and consider, for $s \in ~ ]0, 1]$, the points $(1, -s), (1, s) \in \mathcal{A}$ and their midpoint $\tfrac{1}{2} (1,-s) + \tfrac{1}{2} (1, s) = (1,0)$. Since $f$ is even, $h(1,-s) = h(1,s) = g(1) f(s)$, and $h(1,0) = g(1) f(0)$. Weak convexity with modulus $\rho$ would require
\begin{equation*}
 g (1)f(0) \leq g(1) f(s) + \tfrac{\rho}{2} \cdot \tfrac14(2s)^2 = g(1) f(s)+\tfrac{\rho}{2}s^2,
\end{equation*}
i.e., using $f(0)-f(s)=\sqrt{s}$, we get $g(1)\sqrt{s} \leq \tfrac{\rho}{2}s^2$.

Since $g(1) = 1-2+\tfrac{11}{10} = \tfrac{1}{10} > 0$, dividing by $\sqrt{s}>0$ gives $\tfrac1{10} \leq \tfrac{\rho}{2}s^{3/2}$, which fails as $s\downarrow 0$ for any fixed $\rho\geq 0$. Hence, $h$ is not $\rho$-weakly convex on $\mathcal{A}$ for any $\rho\geq 0$.

\medskip
\noindent
\emph{(iii) $h$ is not quasi-convex, not star quasi-convex, and not quasar-convex on $\mathcal{A}$ for any modulus:} It suffices to consider $h$ restricted to the half-line $(t,0)$, $t\geq 0$, where $h(t,0) = f(0) g (t) = 2g(t)$. Using $g(0)=0$, $g(1) = \tfrac{1}{10} = \tfrac{32}{320}$, and $g \left( \tfrac{1}{2} \right) = \tfrac{53}{320}$ (see Example \ref{example:02}), we get
\begin{equation*}
h(0,0)=0, \quad h(1,0) = \tfrac{1}{5} = \tfrac{32}{160}, \quad\text{and}\quad h\!\left(\tfrac12,0\right) = \tfrac{53}{160}.
\end{equation*}
Since $\tfrac{53}{160} > \tfrac{32}{160}$, the point $\left(\tfrac12,0\right)$, which lies on the segment joining $(0,0)$ and $(1,0)$, satisfies $h\left( \tfrac12, 0 \right) > \max\{h(0,0), h(1,0)\}$, which directly violates quasi-convexity and, in particular, star quasi-convexity with respect to $(0, 0)$ (see \cite[Theorem 12]{PQK-L}).

Furthermore, $h$ is not $\gamma$-quasar-convex with respect to $(0,0)$ for any $\gamma\in~]0,1]$. Indeed, taking $x=(1,0)$ and $\lambda=\tfrac12$ in the quasar-convexity inequality $h((1-\lambda)x)\leq(1-\gamma\lambda)h(x)$ reduces, after cancelling the common factor of $2$ from $h(t,0)=2g(t)$, to $g\!\left(\tfrac12\right)\leq\left(1-\tfrac{\gamma}{2}\right)g(1)$, i.e., $\tfrac{53}{320}\leq\left(1-\tfrac{\gamma}{2}\right)\tfrac{32}{320}$, which is exactly the inequality shown in Example~\ref{example:02} to force $\gamma\leq-\tfrac{21}{16}$, a contradiction.

Therefore, $h$ is weakly quasar-convex on $\mathcal{A}$ with respect to $(0,0)$ (indeed $\left(5,\tfrac12\right)$-weakly quasar-convex), yet neither weakly convex, nor quasi-convex, nor star quasi-convex, nor quasar-convex, for any modulus.
\qed
\end{example}
%------------end example--------------%

%%==========================%%
%%        Properties        %%
%%==========================%%
\subsection{Basic Properties}
We now establish several fundamental properties of weakly quasar-convex functions. 

%________________________%
%       proposition      %
%________________________%
\begin{proposition}\label{prop:quasar-ray}
$h$ is $(\rho,\gamma)$-weakly quasar-convex with respect to $\overline{x} \in \argmin_{\R^n} h$ (where $\rho \geq 0$, $\gamma \in   ]0, 1]$) if and only if for every $d \in \R^n$ with $\|d\|=1$, the function
\begin{equation}\label{eq:quasar-ray}
 \varphi_d(t) := h(\overline{x} + td), \quad t \geq 0,
\end{equation}
is $(\rho,\gamma)$-weakly quasar-convex with respect to $t = 0$ on $[0, +\infty[$, i.e., for every $t \geq 0$ and every $\lambda \in [0,1]$, we have
\begin{equation}
\varphi_d\bigl((1-\lambda)t\bigr) \leq \gamma\lambda  \varphi_d(0) + (1 - \gamma\lambda) \varphi_d(t) + \tfrac{\gamma\rho}{2}  \lambda\left(1 - \tfrac{\lambda}{2-\gamma}\right) t^2. \label{1d:char}
\end{equation}
\end{proposition}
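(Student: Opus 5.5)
The plan is to prove the equivalence by a direct change of variables: every point $x\in\R^n$ can be written as $x=\overline{x}+td$ with $t=\|x-\overline{x}\|\geq 0$ and $\|d\|=1$. Along such a ray, the segment $\lambda\overline{x}+(1-\lambda)x$ stays on the same ray, so \eqref{eq:quasar-wconvex} becomes a one-dimensional statement. No earlier result beyond Definition~\ref{def:wquasar-convex} is needed.

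\emph{Necessity.} Assume $h$ is $(\rho,\gamma)$-weakly quasar-convex with respect to $\overline{x}$. Fix a unit vector $d$, a time $t\geq 0$ and $\lambda\in[0,1]$, and set $x:=\overline{x}+td$. I would first record three identities:
\[
\lambda\overline{x}+(1-\lambda)x=\overline{x}+(1-\lambda)td,\qquad \|x-\overline{x}\|=t,\qquad h(\overline{x})=\varphi_d(0),\quad h(x)=\varphi_d(t).
\]
Substituting these into \eqref{eq:quasar-wconvex} gives \eqref{1d:char} verbatim. I would also note that $0$ minimizes $\varphi_d$ on $[0,+\infty[$, because $\overline{x}\in\argmin_{\R^n} h$. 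This is what makes it legitimate to call $\varphi_d$ weakly quasar-convex with respect to $t=0$.

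\emph{Sufficiency.} Assume \eqref{1d:char} holds for every unit $d$. Take any $x\in\R^n$ and $\lambda\in[0,1]$, and split into two cases.
\begin{itemize}
\item If $x=\overline{x}$, both sides of \eqref{eq:quasar-wconvex} equal $h(\overline{x})$, since $\gamma\lambda+(1-\gamma\lambda)=1$ and the quadratic term vanishes.
\item Otherwise, set $t:=\|x-\overline{x}\|>0$ and $d:=(x-\overline{x})/t$. Then \eqref{1d:char}, read through the same three identities in reverse, is exactly \eqref{eq:quasar-wconvex}.
\end{itemize}

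I do not expect a genuine obstacle. The only point needing care is the bookkeeping: the quadratic term $t^2$ must match $\|x-\overline{x}\|^2$, which it does because $d$ is a unit vector. Interpreting ``weakly quasar-convex on $[0,+\infty[$'' is also harmless, since \eqref{1d:char} only ever evaluates $\varphi_d$ at points $(1-\lambda)t\in[0,t]$.
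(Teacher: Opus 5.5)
Your proposal is correct and follows the paper's proof essentially step for step: necessity comes from substituting $x=\overline{x}+td$ into the defining inequality, and sufficiency comes from treating $x=\overline{x}$ trivially and otherwise taking $t=\|x-\overline{x}\|$ and $d=(x-\overline{x})/t$. Your extra remark that $0$ minimizes $\varphi_d$ on $[0,+\infty[$ is a small but welcome addition; the paper leaves it implicit.
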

%------------------%
\begin{proof}
($\Rightarrow$) 
Fix $d$ with $\|d\|=1$. For any $t \geq 0$ and $\lambda \in [0,1]$, define $x = \overline{x} + td$. Then,
\begin{align*}
 \varphi_d ((1-\lambda)t) & = h (\overline{x} + (1-\lambda) td) = h(\lambda \overline{x} + (1-\lambda)(\overline{x} + td)) = h(\lambda \overline{x} + (1-\lambda) x).
\end{align*}
Applying the $(\rho,\gamma)$-weak quasar-convexity of $h$ at $\overline{x}$ and since $\|x - \overline{x}\| = \|td\| = t$, we get
\begin{align*}
h\bigl(\lambda \overline{x} + (1-\lambda)x\bigr) 
&\leq \gamma\lambda  h(\overline{x}) + (1 - \gamma\lambda)  h(x) 
   + \tfrac{\gamma\rho}{2}  \lambda\left(1 - \tfrac{\lambda}{2-\gamma}\right) \|x - \overline{x}\|^2 \\
&= \gamma\lambda  \varphi_d(0) + (1 - \gamma\lambda)  \varphi_d(t) 
   + \tfrac{\gamma\rho}{2}  \lambda\left(1 - \tfrac{\lambda}{2-\gamma}\right) t^2,
\end{align*}
i.e., \eqref{1d:char} holds.

\noindent
($\Leftarrow$) 
Let $x \in \R^n$ be arbitrary. If $x = \overline{x}$, the weak quasar-convexity inequality holds trivially (both sides equal $h(\overline{x})$). Otherwise, define
$d = \tfrac{x - \overline{x}}{\|x - \overline{x}\|}$ and $t = \|x - \overline{x}\| > 0$,
hence $\|d\| = 1$ and $x = \overline{x} + td$. Then, for any $\lambda \in [0,1]$,
\begin{align*}
h\bigl(\lambda \overline{x} + (1-\lambda)x\bigr) 
&= h\bigl(\overline{x} + (1-\lambda)td\bigr) = \varphi_d\bigl((1-\lambda)t\bigr).
\end{align*}
Applying relation \eqref{1d:char} to $\varphi_d$ gives
\begin{align*}
\varphi_d\bigl((1-\lambda)t\bigr) 
&\leq \gamma\lambda  \varphi_d(0) + (1 - \gamma\lambda)  \varphi_d(t) 
   + \tfrac{\gamma\rho}{2}  \lambda\left(1 - \tfrac{\lambda}{2-\gamma}\right) t^2 \\
&= \gamma\lambda  h(\overline{x}) + (1 - \gamma\lambda)  h(x) 
   + \tfrac{\gamma\rho}{2}  \lambda\left(1 - \tfrac{\lambda}{2-\gamma}\right) \|x - \overline{x}\|^2,
\end{align*}
i.e., $h$ is $(\rho,\gamma)$-weakly quasar-convex at $\overline{x}$.
\end{proof}
%------------end proof--------------%

%________________________%
%       proposition      %
%________________________%
\begin{proposition}\label{prop:quasar-sum}
Let $h_i: \R^n \to \R$ be $(\rho_i,\gamma)$-weakly quasar-convex with respect to $\overline{x} \in \argmin_{\mathbb{R}^{n}}h_i$ for every $i=1,\cdots,m$, where $\gamma \in   ]0, 1]$ is the same for all $i$. Then for every $\alpha_i \geq 0$, the function
$h := \sum_{i=1}^m \alpha_i h_i$ 
is $\left(\sum_{i=1}^m \alpha_i \rho_i,\gamma\right)$-weakly quasar-convex with respect to $\overline{x}$.
\end{proposition}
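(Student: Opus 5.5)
The plan is a direct verification: multiply each defining inequality \eqref{eq:quasar-wconvex} by its nonnegative weight and add. First I check the anchor. Since $\overline{x}\in\argmin_{\R^n} h_i$ for every $i$ and $\alpha_i\ge 0$, we have $h_i(x)\ge h_i(\overline{x})$, hence
\begin{equation*}
h(x)=\sum_{i=1}^m\alpha_i h_i(x)\ge \sum_{i=1}^m\alpha_i h_i(\overline{x})=h(\overline{x}), \quad \forall x\in\R^n.
\end{equation*}
So $\overline{x}\in\argmin_{\R^n} h$, and Definition~\ref{def:wquasar-convex} makes sense for $h$ with respect to $\overline{x}$.

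Next, fix $x\in\R^n$ and $\lambda\in[0,1]$. For each $i$, the $(\rho_i,\gamma)$-weak quasar-convexity of $h_i$ gives
\begin{equation*}
h_i(\lambda\overline{x}+(1-\lambda)x)\le \gamma\lambda h_i(\overline{x})+(1-\gamma\lambda)h_i(x)+\tfrac{\gamma\rho_i}{2}\lambda\left(1-\tfrac{\lambda}{2-\gamma}\right)\|x-\overline{x}\|^2 .
\end{equation*}
Multiplying by $\alpha_i\ge0$ preserves the inequality. The coefficients $\gamma\lambda$, $1-\gamma\lambda$ and $\lambda\bigl(1-\tfrac{\lambda}{2-\gamma}\bigr)$ do not depend on $i$, because $\gamma$ is common to all the functions. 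Summing over $i=1,\dots,m$ therefore factors those coefficients out and gives
\begin{equation*}
h(\lambda\overline{x}+(1-\lambda)x)\le \gamma\lambda h(\overline{x})+(1-\gamma\lambda)h(x)+\tfrac{\gamma}{2}\Bigl(\sum_{i=1}^m\alpha_i\rho_i\Bigr)\lambda\left(1-\tfrac{\lambda}{2-\gamma}\right)\|x-\overline{x}\|^2.
\end{equation*}
This is exactly \eqref{eq:quasar-wconvex} with modulus $\sum_i\alpha_i\rho_i\ge0$.

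The argument has no real obstacle. The only points needing care are the ones checked above: that $\overline{x}$ is a minimizer of the sum, and that the common $\gamma$ is what lets the terms combine. With distinct $\gamma_i$ the coefficients $1-\gamma_i\lambda$ would not factor, and the statement would need a different modulus.
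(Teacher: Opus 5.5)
Your proposal is correct and follows essentially the same route as the paper's proof. You first check that $\overline{x}$ minimizes $h$, then multiply each defining inequality by $\alpha_i \ge 0$ and sum, using the common $\gamma$ to factor out the coefficients.
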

%------------------%
\begin{proof}
Since $\overline{x} \in \cap_{i} \argmin_{\mathbb{R}^{n}}h_i$ and $\alpha_i \geq 0$, it follows that $\overline{x} \in \argmin_{\R^n} h$. Then for every $x \in \R^n$ and every $\lambda \in [0,1]$, we have
\begin{align*}
h\bigl(\lambda \overline{x} + (1-\lambda)x\bigr) 
&= \sum_{i=1}^m \alpha_i  h_i\bigl(\lambda \overline{x} + (1-\lambda)x\bigr) \\
&\leq \sum_{i=1}^m \alpha_i \Biggl(  \gamma\lambda  h_i (\overline{x}) + (1 - \gamma\lambda) h_i(x) + \tfrac{\gamma \rho_i}{2}  \lambda \left(1 - \tfrac{ \lambda }{2 - \gamma}\right) \|x - \overline{x}\|^2 \Biggr) \\
&= \gamma\lambda \sum_{i=1}^m \alpha_i h_i(\overline{x}) + (1 - \gamma\lambda) \sum_{i=1}^m \alpha_i h_i(x)  + \tfrac{\gamma}{2}  \lambda\left(1 - \tfrac{\lambda}{2-\gamma}\right) \|x - \overline{x} \|^2 \sum_{i=1}^m \alpha_i \rho_i \\
& = \gamma\lambda  h(\overline{x}) + (1 - \gamma \lambda)  h(x) + \tfrac{\gamma}{2} \left( \sum_{i=1}^m \alpha_i \rho_i\right) \lambda \left(1 - \tfrac{\lambda}{2-\gamma}\right) \|x - \overline{x} \|^2.
\end{align*}
Therefore, $h$ is $(\rho,\gamma)$-weakly quasar-convex at $\overline{x}$ with modulus $\rho = \sum_{i=1}^m \alpha_i \rho_i$ and $\gamma \in ]0, 1]$.
\end{proof}
%------------end proof--------------%

%

The following calculus rule will be useful.

%________________________%
%       proposition      %
%________________________%
\begin{proposition} \label{prop:weak-quasar-shift}
 Let $h: \mathbb{R}^n \to\mathbb{R}$ and let $\overline{x} \in \argmin_{\mathbb{R}^n} h$. For $\gamma \in ]0,1]$ and $\rho \ge 0$, define
 \begin{equation}\label{eq:shifted-function}
  g(x) := h(x) + \tfrac{\gamma \rho}{2 (2-\gamma)} \|x - \overline{x}\|^2.
 \end{equation}
 Then $h$ is $(\rho,\gamma)$-weakly quasar-convex with respect to $\overline{x}$ if and only if $g$ is $\gamma$-quasar-convex with respect to $\overline{x}$.
\end{proposition}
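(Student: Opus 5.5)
The plan is a direct algebraic verification: the quadratic shift in \eqref{eq:shifted-function} converts the weak quasar-convexity inequality \eqref{eq:quasar-wconvex} for $h$ exactly into the quasar-convexity inequality (with $\rho=0$) for $g$, and vice versa. So both directions follow from a single chain of equivalences, valid for each fixed $x$ and $\lambda$.

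First I will check that $\overline{x}$ is an admissible anchor for $g$, since quasar-convexity is defined relative to a global minimizer. Set $c := \tfrac{\gamma\rho}{2(2-\gamma)} \ge 0$. Then $g(\overline{x}) = h(\overline{x})$, and for every $x$,
\[
g(x) = h(x) + c\|x-\overline{x}\|^2 \ge h(\overline{x}) = g(\overline{x}).
\]
Hence $\overline{x} \in \argmin_{\R^n} g$. Since $\overline{x} \in \argmin_{\R^n} h$ is assumed, both notions are anchored at the same point.

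Next, fix $x\in\R^n$ and $\lambda\in[0,1]$, and write $r := \|x-\overline{x}\|$. Because $\lambda\overline{x}+(1-\lambda)x - \overline{x} = (1-\lambda)(x-\overline{x})$, we have
\[
g(\lambda\overline{x}+(1-\lambda)x) = h(\lambda\overline{x}+(1-\lambda)x) + c(1-\lambda)^2 r^2 .
\]
The $\gamma$-quasar-convex inequality for $g$ is
\[
g(\lambda\overline{x}+(1-\lambda)x) \le \gamma\lambda\, g(\overline{x}) + (1-\gamma\lambda)\, g(x).
\]
Substituting the expressions for $g$, it is equivalent to
\[
h(\lambda\overline{x}+(1-\lambda)x) \le \gamma\lambda\, h(\overline{x}) + (1-\gamma\lambda)\, h(x) + c\,\bigl[(1-\gamma\lambda)-(1-\lambda)^2\bigr] r^2 .
\]

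The only real computation is the bracket:
\[
(1-\gamma\lambda)-(1-\lambda)^2 = \lambda(2-\gamma-\lambda) = (2-\gamma)\,\lambda\Bigl(1-\tfrac{\lambda}{2-\gamma}\Bigr).
\]
Multiplying by $c$ gives exactly $\tfrac{\gamma\rho}{2}\lambda\bigl(1-\tfrac{\lambda}{2-\gamma}\bigr)r^2$, which is the quadratic term in \eqref{eq:quasar-wconvex}. This is where the constant $c$ is seen to be the right choice. Every step above is an equivalence holding for all $x$ and $\lambda$, so the inequality for $g$ holds for all $(x,\lambda)$ if and only if \eqref{eq:quasar-wconvex} holds for all $(x,\lambda)$. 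This gives both implications at once.

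I expect no real obstacle. The points needing care are the argmin bookkeeping in the first step and the factorization of the bracket.
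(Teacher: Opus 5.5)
Your proposal is correct and follows essentially the same route as the paper's proof. Both set $c=\tfrac{\gamma\rho}{2(2-\gamma)}$, check that $\overline{x}\in\argmin_{\R^n} g$, substitute $g$ at the three points, and use the identity $(1-\gamma\lambda)-(1-\lambda)^2=\lambda(2-\gamma-\lambda)$ to match the quadratic coefficient in \eqref{eq:quasar-wconvex}. Both then obtain the two implications from the pointwise equivalence for each pair $(x,\lambda)$.
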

%-----------------
\begin{proof}
 Clearly, $\overline{x} \in \argmin_{ \mathbb{R}^n} g$. Furthermore, throughout the proof, ``$g$ is $\gamma$-quasar-convex with respect to $\overline{x}$'' refers to the $\rho = 0$ instance of \eqref{eq:quasar-wconvex}, namely
\begin{equation}\label{eq:QC}
 g(\lambda \overline{x} + (1-\lambda)x) \leq \gamma \lambda g(\overline{x}) + (1-\gamma\lambda) g(x), ~ \forall  x \in \mathbb{R}^n, ~ \forall  \lambda \in [0,1].
\end{equation}
Set $c := \frac{\gamma\rho}{2(2-\gamma)}$. Since $\gamma \in ]0,1]$, we have $2-\gamma \ge 1 > 0$, so $c$ is well defined and $c \ge 0$ because $\rho \ge 0$. Furthermore, the term $c\|x-\overline{x}\|^{2}$ is non-negative for every $x \in \mathbb{R}^n$, vanishing precisely at $x = \overline{x}$ when $\rho > 0$ (and identically, with $g = h$, when $\rho = 0$). Consequently,
\begin{equation*}
 g(x) - g(\overline{x}) = \big(h(x) - h (\overline{x}) \big) + c\|x - \overline{x}\|^{2} \ge 0, ~ \forall  x \in \mathbb{R}^n,
\end{equation*}
since both summands are non-negative, the first because $\overline{x} \in \argmin_{\mathbb{R}^n} h$. It therefore follows that $\overline{x}$ is likewise a global minimizer of $g$ over $\mathbb{R}^n$, thus the base point $\overline{x}$ remains valid for the shifted function $g$, and the assertion that $g$ is $\gamma$-quasar-convex with respect to $\overline{x}$ is consequently well posed.

Fix $x \in \mathbb{R}^n$ and $\lambda \in [0,1]$, and set $z_\lambda := \lambda \overline{x} + (1-\lambda)x$. Since $z_\lambda - \overline{x} = (1-\lambda)(x - \overline{x})$, we have $\|z_\lambda - \overline{x}\|^{2} = (1-\lambda)^{2}\|x-\overline{x}\|^{2}$, thus
\begin{equation*}
g(\overline{x}) = h(\overline{x}), \quad g(x) = h(x) + c\|x-\overline{x}\|^{2}, \quad\text{and}\quad g(z_\lambda) = h(z_\lambda) + c(1-\lambda)^{2}\|x-\overline{x}\|^{2}.
\end{equation*}
Substituting these three expressions in \eqref{eq:QC} and rearranging shows that $g(z_\lambda) \leq \gamma\lambda g(\overline{x}) + (1-\gamma\lambda) g(x)$ holds if and only if
\begin{equation*}
 h(z_\lambda) \leq \gamma\lambda h(\overline{x}) + (1-\gamma \lambda) h(x) + c \Big((1-\gamma\lambda) - (1-\lambda)^{2}\Big) \|x-\overline{x}\|^{2}.
\end{equation*}
A direct expansion gives $(1-\gamma\lambda) - (1-\lambda)^{2} = \lambda(2-\gamma-\lambda)$, so that
\begin{equation*}
 c \Big((1-\gamma\lambda) - (1-\lambda)^{2}\Big) = \tfrac{\gamma \rho}{2(2-\gamma)} \lambda(2-\gamma-\lambda) = \tfrac{\gamma \rho}{2}\lambda\left(1 - \tfrac{\lambda}{2-\gamma} \right),
\end{equation*}
which is exactly the coefficient of $\|x-\overline{x}\|^2$ appearing in \eqref{eq:quasar-wconvex}. Then, for every $x \in \mathbb{R}^n$ and every $\lambda \in [0,1]$, the inequality $g(z_\lambda) \leq \gamma\lambda g(\overline{x}) + (1-\gamma\lambda)g(x)$ holds if and only if
\begin{equation*}
h(z_\lambda) \leq \gamma\lambda h(\overline{x}) + (1-\gamma\lambda) h(x) + \tfrac{\gamma\rho}{2}\lambda\left(1-\tfrac{\lambda}{2-\gamma}\right)\|x-\overline{x}\|^{2},
\end{equation*}
that is, precisely when inequality \eqref{eq:quasar-wconvex} holds for $h$ at the same pair $(x, \lambda)$. Since both notions of quasar-convexity require their defining inequality to hold for every $x \in \mathbb{R}^n$ and every $\lambda \in [0,1]$, taking the conjunction over all such pairs on each side yields $h$ is $(\rho,\gamma)$-weakly quasar-convex with respect to $\overline{x}$ if and only if $g$ is $\gamma$-quasar-convex with respect to $\overline{x}$, 
which proves the proposition. 
\end{proof}

%------------------%

Now, we provide a sufficient condition for the preservation of the weak quasar-convexity under linear composition.
%________________________%
%       proposition      %
%________________________%
\begin{proposition}\label{prop:quasar-composition}
Let $h: \R^m \to \R$ be $(\rho, \gamma)$-weakly quasar-convex with respect to $\overline{y} \in \argmin h$, with $\rho\geq 0$ and $\gamma \in ]0, 1]$. Let $A: \R^n \to \R^m$ be a linear operator satisfying $A\overline{x} = \overline{y}$, and assume $\overline{x} \in \argmin_{\R^n} g$ where $g(x) = h(Ax)$. Then $g$ is $\bigl( \rho \|A\|_{ \mathrm{op}}^2, \gamma \bigr)$-weakly quasar-convex with respect to $\overline{x}$, where $\|A\|_{ \mathrm{op}}$ denotes the operator norm of $A$.
\end{proposition}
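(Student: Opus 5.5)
The plan is to verify inequality \eqref{eq:quasar-wconvex} for $g$ directly, by pulling everything back through the linear map $A$. First I note that the base point is legitimate: $\overline{x} \in \argmin_{\R^n} g$ is assumed, and $g(\overline{x}) = h(A\overline{x}) = h(\overline{y})$. In fact $h(\overline{y}) = \min h \le h(Ax)$ for all $x$, so this hypothesis is automatic, but it is given anyway.

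Next I fix $x \in \R^n$ and $\lambda \in [0,1]$ and set $y := Ax \in \R^m$. By linearity and $A\overline{x} = \overline{y}$,
\begin{equation*}
A\bigl(\lambda\overline{x} + (1-\lambda)x\bigr) = \lambda \overline{y} + (1-\lambda) y .
\end{equation*}
Hence
\begin{equation*}
g\bigl(\lambda\overline{x} + (1-\lambda)x\bigr) = h\bigl(\lambda \overline{y} + (1-\lambda)y\bigr).
\end{equation*}
I then apply the $(\rho,\gamma)$-weak quasar-convexity of $h$ with respect to $\overline{y}$ at the point $y$, which gives
\begin{equation*}
h\bigl(\lambda \overline{y} + (1-\lambda)y\bigr) \le \gamma\lambda\, h(\overline{y}) + (1-\gamma\lambda)\, h(y) + \tfrac{\gamma\rho}{2}\lambda\left(1-\tfrac{\lambda}{2-\gamma}\right)\|y-\overline{y}\|^2 .
\end{equation*}
Rewriting $h(\overline{y}) = g(\overline{x})$ and $h(y) = g(x)$ turns the first two terms into the corresponding terms for $g$.

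The only remaining step is the quadratic term. Here $\|y - \overline{y}\| = \|A(x-\overline{x})\| \le \|A\|_{\mathrm{op}}\|x - \overline{x}\|$. The coefficient $\tfrac{\gamma\rho}{2}\lambda\bigl(1-\tfrac{\lambda}{2-\gamma}\bigr)$ is nonnegative: $\rho \ge 0$, and $\lambda \le 1 \le 2-\gamma$ gives $1 - \lambda/(2-\gamma) \ge 0$. So the bound can be substituted without reversing the inequality, and it yields exactly \eqref{eq:quasar-wconvex} for $g$ with modulus $\rho\|A\|_{\mathrm{op}}^2$ and the same $\gamma$.

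There is no real obstacle. The one point that needs care is the sign check of the coefficient, since that is what licenses replacing $\|A(x-\overline{x})\|^2$ by the larger quantity $\|A\|_{\mathrm{op}}^2\|x-\overline{x}\|^2$.
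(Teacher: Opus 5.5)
Your proposal is correct and follows the paper's proof essentially step for step: you pull back through $A$ using linearity and $A\overline{x}=\overline{y}$, apply the weak quasar-convexity of $h$ at $Ax$, and bound $\|A(x-\overline{x})\|^2 \le \|A\|_{\mathrm{op}}^2\|x-\overline{x}\|^2$. Your two additions are also correct and make explicit what the paper leaves implicit: the sign check on the coefficient $\tfrac{\gamma\rho}{2}\lambda\bigl(1-\tfrac{\lambda}{2-\gamma}\bigr)$, and the observation that the hypothesis $\overline{x}\in\argmin_{\mathbb{R}^n} g$ is redundant.
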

%------------------%
\begin{proof}
 Since $A$ is linear and $A\overline{x} = \overline{y}$, we have for every $x \in \R^n$ and every $\lambda \in [0,1]$ that $A \bigl(\lambda \overline{x} + (1-\lambda)x\bigr) = \lambda A\overline{x} + (1-\lambda)Ax = \lambda \overline{y} + (1-\lambda)Ax$. Applying the $(\rho, \gamma)$-weak quasar-convexity of $h$ at $\overline{y}$ with $y = Ax$ yields
\begin{align}
 g\bigl(\lambda \overline{x} + (1-\lambda)x\bigr) & = h\bigl(A(\lambda \overline{x} + (1-\lambda)x) \bigr) = h\bigl(\lambda \overline{y} + (1-\lambda) Ax\bigr) \notag \\
 & \leq \gamma\lambda  h(\overline{y}) + (1 - \gamma\lambda)  h(Ax) + \tfrac{\gamma\rho}{2}  \lambda\left(1 - \tfrac{\lambda}{2-\gamma}\right) \|Ax - \overline{y}\|^2. \label{eq:quasar-op}
\end{align}
Since $\overline{y} = A\overline{x}$, we have $\|Ax - \overline{y}\| = \|A(x - \overline{x})\|$. By definition of the operator norm,
\begin{equation*}
\|A(x - \overline{x})\| \leq \|A\|_{\mathrm{op}}  \|x - \overline{x}\| ~ \Longrightarrow ~ \|Ax - \overline{y}\|^2 \leq \|A\|_{\mathrm{op}}^2  \|x - \overline{x}\|^2.
\end{equation*}
Substituting this into \eqref{eq:quasar-op} and using $g(\overline{x}) = h(\overline{y})$ and $g(x) = h(Ax)$, we obtain
\begin{align*}
g\bigl(\lambda \overline{x} + (1-\lambda)x\bigr) 
&\leq \gamma\lambda  g(\overline{x}) + (1 - \gamma\lambda)  g(x) + \tfrac{\gamma\rho}{2}  \lambda\left(1 - \tfrac{\lambda}{2-\gamma}\right) \|A\|_{\mathrm{op}}^2  \|x - \overline{x}\|^2 \\
&= \gamma\lambda  g(\overline{x}) + (1 - \gamma\lambda)  g(x) 
   + \tfrac{\gamma \bigl(\rho\|A\|_{\mathrm{op}}^2\bigr)}{2}  \lambda\left(1 - \tfrac{\lambda}{2-\gamma}\right) \|x - \overline{x}\|^2.
\end{align*}
Hence $g$ is $(\rho\|A\|_{\mathrm{op}}^2, \gamma)$-weak quasar-convex at $\overline{x}$.
\end{proof}
%------------end proof--------------%

Another interesting property of weakly quasar-convex functions, is the following extension of \cite[Theorem 4.8]{BLV} from the quasar-convex to the weakly quasar-convex case, (see also \cite[Theorem 3]{HL-sepa}). Its similar proof is omitted.% \abbas{this sentence related to? the following have a full proof}

%________________________%
%       proposition      %
%________________________%
\begin{proposition} \label{theo:quasar-sepa}
Let $h_{i}: \R^{n_{i}} \rightarrow \R$, with $i \in \{1, \cdots, m\}$ be functions, with $n_{i} \geq 0$ and $\sum^{m}_{i=1} n_{i}=n$, and $h:\R^{n} \rightarrow \R$ be a function such that
\begin{equation}\label{eq:quasar-separable}
 h(x_{1}, \cdots, x_{m})=\sum_{i=1}^{m} h_{i}(x_{i}),
\end{equation}
Let $\overline{x} = (\overline{x}_{1}, \cdots ,\overline{x}_{m}) \in \R^{n}$. Then, for any $\rho \geq 0$ and $\gamma \in ]0,1]$, $h$ is $(\rho, \gamma)$-weakly quasar-convex with respect to $\overline{x} \in \argmin_{\R^{n}}~h$ if and only if every $h_i$ is $(\rho,\gamma)$-weakly quasar-convex with respect to $\overline{x}_{i} \in \argmin_{ \R^{n_{i}}} h_{i}$ (with the \emph{same} $\rho$ and $\gamma$).
\end{proposition}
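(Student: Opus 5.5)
The plan is to prove the two implications separately. The only tool needed is that the defining inequality \eqref{eq:quasar-wconvex} is affine in the function values, and that $\|x-\overline{x}\|^2$ splits as $\sum_i \|x_i-\overline{x}_i\|^2$. Throughout, write $c(\lambda) := \tfrac{\gamma\rho}{2}\lambda\left(1-\tfrac{\lambda}{2-\gamma}\right)$. This quantity is nonnegative for $\lambda\in[0,1]$, since $\lambda \le 1 \le 2-\gamma$.

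\emph{Sufficiency.} Assume each $h_i$ is $(\rho,\gamma)$-weakly quasar-convex with respect to $\overline{x}_i \in \argmin h_i$. First, $\overline{x} \in \argmin h$, because $h(x)=\sum_i h_i(x_i) \ge \sum_i h_i(\overline{x}_i) = h(\overline{x})$. Next, fix $x=(x_1,\dots,x_m)$ and $\lambda\in[0,1]$. The $i$-th block of $\lambda\overline{x}+(1-\lambda)x$ is $\lambda\overline{x}_i+(1-\lambda)x_i$. Apply \eqref{eq:quasar-wconvex} to each $h_i$ and sum over $i$. This gives
\[
h(\lambda\overline{x}+(1-\lambda)x) \le \gamma\lambda h(\overline{x}) + (1-\gamma\lambda)h(x) + c(\lambda)\sum_i\|x_i-\overline{x}_i\|^2,
\]
and the last sum equals $\|x-\overline{x}\|^2$. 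Note that this does not go through Proposition~\ref{prop:quasar-sum}. That result would add the moduli to $m\rho$, whereas here the blocks are orthogonal, so the same $\rho$ is kept.

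\emph{Necessity.} Assume $h$ is $(\rho,\gamma)$-weakly quasar-convex with respect to $\overline{x}\in\argmin h$.

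First, each $\overline{x}_i$ minimizes $h_i$. Indeed, if $h_i(y_i)<h_i(\overline{x}_i)$ for some $y_i$, replace the $i$-th block of $\overline{x}$ by $y_i$. This strictly lowers $h$, which contradicts minimality of $\overline{x}$.

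Next, fix $i$, a point $x_i\in\R^{n_i}$ and $\lambda\in[0,1]$. Test \eqref{eq:quasar-wconvex} for $h$ at the point $x$ that equals $\overline{x}$ in every block except the $i$-th, where it equals $x_i$. Then $\lambda\overline{x}+(1-\lambda)x$ agrees with $\overline{x}$ off block $i$, and $\|x-\overline{x}\|=\|x_i-\overline{x}_i\|$. Write $S := \sum_{j\ne i}h_j(\overline{x}_j)$. The inequality for $h$ becomes
\[
S + h_i(\lambda\overline{x}_i+(1-\lambda)x_i) \le \gamma\lambda\big(S+h_i(\overline{x}_i)\big) + (1-\gamma\lambda)\big(S+h_i(x_i)\big) + c(\lambda)\|x_i-\overline{x}_i\|^2.
\]
The constant $S$ appears with total coefficient $\gamma\lambda+(1-\gamma\lambda)=1$ on the right, so it cancels. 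What remains is exactly \eqref{eq:quasar-wconvex} for $h_i$ with the same $(\rho,\gamma)$.

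I do not expect a real obstacle. The only delicate points are the following:
\begin{itemize}
\item the cancellation of the other blocks works only because the coefficients $\gamma\lambda$ and $1-\gamma\lambda$ sum to one;
\item the statement assumes $\overline{x}_i\in\argmin h_i$, and this has to be derived in the necessity direction;
\item blocks with $n_i=0$ are vacuous and can be handled trivially.
\end{itemize}
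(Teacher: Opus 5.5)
Your proof is correct. Summing the blockwise inequalities with $\|x-\overline{x}\|^2=\sum_i\|x_i-\overline{x}_i\|^2$ gives sufficiency with the same $\rho$. Freezing every block except the $i$-th at $\overline{x}_j$ and cancelling the constant $S$ (its coefficients sum to one) gives necessity, and you derive the $\operatorname{argmin}$ memberships correctly in both directions. The paper omits this proof, calling it similar to the quasar-convex separability result it cites, and your block-summation/restriction argument is essentially that adaptation.
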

%------------proof--------------%

The proof of the following characterization follows
the reasoning line from \cite[Theorem 31]{ahookhosh2026quasar} (which studies the strong case), as a consequence, it is omitted.

%________________________%
%       proposition      %
%________________________%
\begin{proposition}[Clarke First-Order Characterization] \label{theo:char}
 Assume $h$ is a locally Lipschitz function with a convex domain. Then $h$ is $(\rho, \gamma)$-weakly quasar-convex with respect to $\overline{x} \in \argmin h$, where $\rho \geq 0$ and $\gamma \in   ]0, 1]$, if and only if for every $x \neq \overline{x}$, we have
\begin{equation}\label{Clarke:char}
 h(\overline{x}) \geq h(x) + \tfrac{1}{\gamma} \langle v, \overline{x} - x \rangle - \tfrac{\rho}{2} \|x - \overline{x}\|^2, ~ \forall  v \in \partial^{C} h(x).
\end{equation}
\end{proposition}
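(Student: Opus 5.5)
The plan is to reduce the statement to a Clarke first-order characterization of plain $\gamma$-quasar-convexity, using the shift of Proposition~\ref{prop:weak-quasar-shift}. Set $c:=\tfrac{\gamma\rho}{2(2-\gamma)}$ and $g(x):=h(x)+c\|x-\overline{x}\|^2$. By Proposition~\ref{prop:weak-quasar-shift}, $h$ is $(\rho,\gamma)$-weakly quasar-convex with respect to $\overline{x}$ if and only if $g$ is $\gamma$-quasar-convex with respect to $\overline{x}$.

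The function $g$ is locally Lipschitz. Since the quadratic term is $C^1$, the Clarke sum rule holds with equality:
\[
\partial^C g(x)=\partial^C h(x)+2c(x-\overline{x}).
\]
For $w=v+2c(x-\overline{x})$ with $v\in\partial^C h(x)$, the inequality $g(\overline{x})\ge g(x)+\tfrac1\gamma\langle w,\overline{x}-x\rangle$ expands to
\[
h(\overline{x})\ge h(x)+\tfrac1\gamma\langle v,\overline{x}-x\rangle+\Bigl(c-\tfrac{2c}{\gamma}\Bigr)\|x-\overline{x}\|^2 .
\]
Here $c-\tfrac{2c}{\gamma}=-\tfrac{c(2-\gamma)}{\gamma}=-\tfrac{\rho}{2}$, so this is exactly \eqref{Clarke:char}. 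It therefore suffices to prove the following: $g$ is $\gamma$-quasar-convex with respect to $\overline{x}$ if and only if $\langle w,\overline{x}-x\rangle\le\gamma\,(g(\overline{x})-g(x))$ for all $x\neq\overline{x}$ and all $w\in\partial^C g(x)$.

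\emph{Necessity.} Fix $x\neq\overline{x}$, put $d:=\overline{x}-x$, and let $L$ be a Lipschitz constant of $g$ near $x$. For $y$ near $x$ and small $t>0$, the quasar inequality at $y$ with $\lambda=t$ gives
\[
g(y+t(\overline{x}-y))\le g(y)+\gamma t\,(g(\overline{x})-g(y)).
\]
The points $y+td$ and $y+t(\overline{x}-y)$ differ by $t(x-y)$. Hence
\[
\frac{g(y+td)-g(y)}{t}\le \gamma\,(g(\overline{x})-g(y))+L\|x-y\|.
\]
Taking the $\limsup$ as $y\to x$ and $t\downarrow0$, and using continuity of $g$, yields $g^\circ(x;d)\le\gamma\,(g(\overline{x})-g(x))$. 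Then \eqref{prop:direc} gives $\langle w,d\rangle\le g^\circ(x;d)$ for every $w\in\partial^C g(x)$. The point of perturbing the base point $y$, rather than fixing it at $x$, is that it controls the full Clarke $\limsup$ and not only a one-sided derivative. I expect this to be the main subtlety, since a naive $\lambda\downarrow0$ argument at $x$ alone bounds the wrong quantity.

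\emph{Sufficiency.} Fix $x\neq\overline{x}$ and let $\psi(s):=g(\overline{x}+s(x-\overline{x}))-g(\overline{x})$ for $s\in[0,1]$. This function is Lipschitz, hence differentiable almost everywhere. By the Clarke chain rule, $\psi'(s)\in\langle\partial^C g(z_s),\,x-\overline{x}\rangle$ where $z_s:=\overline{x}+s(x-\overline{x})$. Apply the hypothesis at $z_s$; since $\overline{x}-z_s=-s(x-\overline{x})$, it gives $s\,\psi'(s)\ge\gamma\,\psi(s)$ for almost every $s\in\,]0,1]$.

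Consequently $s\mapsto s^{-\gamma}\psi(s)$ is locally absolutely continuous on $]0,1]$ with nonnegative a.e.\ derivative, so it is nondecreasing. This gives $\psi(1-\lambda)\le(1-\lambda)^\gamma\,\psi(1)$. Since $\psi(1)\ge0$, Bernoulli's inequality $(1-\lambda)^\gamma\le1-\gamma\lambda$ then yields $g(\lambda\overline{x}+(1-\lambda)x)\le\gamma\lambda\, g(\overline{x})+(1-\gamma\lambda)\,g(x)$. The case $\lambda=1$ is trivial.

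The main obstacles are the Clarke $\limsup$ step in necessity and the justification of monotonicity (absolute continuity of $s^{-\gamma}\psi$ on compact subintervals of $]0,1]$) in sufficiency. Convexity of the domain is used only so that the segments $[\overline{x},x]$ lie in the domain.
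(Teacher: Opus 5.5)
Your proof is correct. It takes a different route from the paper, which omits the argument and only says it follows the line of the strong-case Clarke characterization in \cite[Theorem 31]{ahookhosh2026quasar}. That amounts to a direct adaptation of the strong-case argument, with the quadratic term entering with the opposite sign.

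You instead factor the statement through Proposition~\ref{prop:weak-quasar-shift}. The exact Clarke sum rule for the $C^1$ perturbation $c\|\cdot-\overline{x}\|^2$ transports the condition between $h$ and $g$; you rightly need both inclusions, one for each direction. The identity $c-\tfrac{2c}{\gamma}=-\tfrac{\rho}{2}$ then shows exactly where the $-\tfrac{\rho}{2}$ term comes from, so only the $\rho=0$ case has to be proved.

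You prove that case self-containedly:
\begin{itemize}
\item In necessity, you perturb the base point $y$ and absorb the direction mismatch $t(y-x)$ with the local Lipschitz constant. This bounds the full Clarke derivative $g^\circ(x;\overline{x}-x)$, not just the upper Dini derivative that a plain $\lambda\downarrow 0$ argument at $x$ would give, and the latter would not control every $w\in\partial^C g(x)$. Local Lipschitz continuity around $x$ is also what guarantees these $y$ lie in $\dom(h)$.
\item In sufficiency, the integrating factor $s^{-\gamma}$, with absolute continuity on $[a,1]$ for $a>0$, yields the sharper power bound $g(\overline{x}+s(x-\overline{x}))-g(\overline{x})\le s^{\gamma}\,(g(x)-g(\overline{x}))$. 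The linear-in-$\lambda$ form then follows from Bernoulli's inequality and $\overline{x}\in\argmin g$.
\end{itemize}

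Your route buys modularity and a complete proof that reuses a result already in the paper. The paper's route keeps the weak and strong cases under a single formula, but leaves the nonsmooth details, notably the Clarke $\limsup$ step, to the cited reference.
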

%------------------%

Since a continuously differentiable function is locally Lipschitz, and its Clarke subdifferential reduces to the gradient (i.e., $\partial^C h(x) = \{\nabla h(x)\}$  \cite{Clarke1990}), Proposition \ref{theo:char} immediately yields the following smooth characterization. 

%________________________%
%       corollary        %
%________________________%
\begin{corollary}[First-Order Characterization]\label{cor:star-first_order}
 Assume that $h$ is continuously differentiable on a convex domain $\dom(h)$. Then $h$ is $(\rho, \gamma)$-weakly quasar-convex with respect to $\overline{x} \in \argmin h$, where $\rho \geq 0$ and $\gamma \in   ]0, 1]$, if and only if 
\begin{equation*}
 h(\overline{x})  \geq h(x) + \tfrac{1}{\gamma} \langle \nabla h(x), \overline{x} - x \rangle - \tfrac{\rho}{2} \|x - \overline{x}\|^2.
\end{equation*}
\end{corollary}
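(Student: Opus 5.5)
The corollary should follow directly from Proposition~\ref{theo:char}. The plan is to check its hypotheses and then specialize the Clarke subdifferential to the gradient.

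First, if $h$ is continuously differentiable on the convex set $\dom(h)$, then $h$ is locally Lipschitz there. This holds because around any point $x$ we can take a closed ball inside $\dom(h)$ (on which $\nabla h$ is bounded by continuity). The mean value theorem on this convex ball then gives a Lipschitz constant $\sup\|\nabla h\|$. Since $\dom(h)$ is convex by assumption, all hypotheses of Proposition~\ref{theo:char} are met.

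Second, I would invoke the standard fact from \cite{Clarke1990} that $\partial^C h(x)=\{\nabla h(x)\}$ for $C^1$ functions. One way to see it: strict differentiability gives $h^\circ(x;d)=\langle\nabla h(x),d\rangle$, and \eqref{prop:direc} then identifies the subdifferential as a singleton. Substituting $v=\nabla h(x)$ into \eqref{Clarke:char} turns the quantifier ``for all $v\in\partial^C h(x)$'' into the single inequality
\begin{equation*}
h(\overline{x}) \geq h(x) + \tfrac{1}{\gamma}\langle \nabla h(x), \overline{x}-x\rangle - \tfrac{\rho}{2}\|x-\overline{x}\|^2 .
\end{equation*}
Proposition~\ref{theo:char} then yields the equivalence for all $x\neq\overline{x}$.

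Third, the case $x=\overline{x}$ is trivial, since both sides equal $h(\overline{x})$. So the inequality may be stated for all $x$ in the domain, as in the corollary.

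The only potential obstacle is the localness issue. If $\dom(h)$ is not open, the local Lipschitz property and the gradient identification are needed only at points where the Clarke objects are defined. I would read ``continuously differentiable on $\dom(h)$'' as differentiability on an open neighborhood, as implicitly done in the paper, so no further work is needed.
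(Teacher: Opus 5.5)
Your proposal is correct and follows the paper's own argument: the paper likewise obtains the corollary in one line from Proposition~\ref{theo:char}, using that a $C^1$ function is locally Lipschitz and that $\partial^C h(x)=\{\nabla h(x)\}$. Your remarks on the trivial case $x=\overline{x}$ and on reading ``continuously differentiable on $\dom(h)$'' as differentiability on an open set only make explicit what the paper leaves implicit.
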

%------------------%

An immediate structural consequence of combining the $\mu$-local quadratic growth condition~\eqref{eq:QG} with $(\rho, \gamma)$-weak quasar-convexity under the assumption $\mu > \rho$, is that the $\zeta$-sublevel set $\mathcal{L}_{\zeta}^h(\mathbb{R}^n, \overline{x})$ is free of spurious stationary points, as we show next.

%________________________%
%     proposition     *  %
%________________________%
\begin{proposition}[Absence of Spurious Stationary Points]\label{prop:no_spurious_stationary_points}
 Let $h: \mathbb{R}^n \to \mathbb{R}$ be a continuously differentiable function that satisfies the $\mu$-local quadratic growth condition \eqref{eq:QG} and is $(\rho,\gamma)$-weakly quasar-convex with respect to the global minimizer $\overline{x}$, with $\rho\geq 0$ and $\gamma \in ]0,1]$. If $\mu > \rho$, then $h$ contains no spurious stationary points within the sublevel set $\mathcal{L}_{\zeta}^h (\mathbb{R}^n, \overline{x})$ for some $\zeta>0$, that is, for every $x \in \mathcal{L}_{\zeta}^h (\mathbb{R}^n, \overline{x})$ with $x \neq \overline{x}$, we have $\nabla h(x) \neq 0$.
\end{proposition}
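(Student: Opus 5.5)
The plan is to argue by contradiction, combining the smooth first-order characterization of Corollary~\ref{cor:star-first_order} with the local quadratic growth bound \eqref{eq:QG}. Take $\zeta>0$ to be the parameter for which \eqref{eq:QG} holds at $\overline{x}$. Suppose there is a point $x \in \mathcal{L}_{\zeta}^h(\mathbb{R}^n,\overline{x})$ with $x \neq \overline{x}$ and $\nabla h(x)=0$.

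First apply Corollary~\ref{cor:star-first_order}. Since $h$ is continuously differentiable on $\mathbb{R}^n$, which is convex, and $(\rho,\gamma)$-weakly quasar-convex with respect to $\overline{x}$, we have
\begin{equation*}
h(\overline{x}) \geq h(x) + \tfrac{1}{\gamma}\langle \nabla h(x), \overline{x}-x\rangle - \tfrac{\rho}{2}\|x-\overline{x}\|^2 .
\end{equation*}
Substituting $\nabla h(x)=0$ gives
\begin{equation*}
h(x)-h(\overline{x}) \leq \tfrac{\rho}{2}\|x-\overline{x}\|^2 .
\end{equation*}
If one prefers not to rely on the omitted proof behind the corollary, the same bound follows directly from \eqref{eq:quasar-wconvex}. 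Rearrange it to
\begin{equation*}
\gamma\lambda\big(h(x)-h(\overline{x})\big) \leq h(x) - h(\lambda\overline{x}+(1-\lambda)x) + \tfrac{\gamma\rho}{2}\lambda\big(1-\tfrac{\lambda}{2-\gamma}\big)\|x-\overline{x}\|^2 .
\end{equation*}
Divide by $\lambda>0$ and let $\lambda\downarrow 0$. Differentiability gives $\big(h(x)-h(x+\lambda(\overline{x}-x))\big)/\lambda \to -\langle\nabla h(x),\overline{x}-x\rangle = 0$, so
\begin{equation*}
\gamma\big(h(x)-h(\overline{x})\big) \leq \tfrac{\gamma\rho}{2}\|x-\overline{x}\|^2 .
\end{equation*}
Dividing by $\gamma>0$ recovers the bound above.

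Next, because $x$ lies in the sublevel set $\mathcal{L}_{\zeta}^h(\mathbb{R}^n,\overline{x})$, the growth condition \eqref{eq:QG} applies and gives
\begin{equation*}
\tfrac{\mu}{2}\|x-\overline{x}\|^2 \leq h(x)-h(\overline{x}) .
\end{equation*}
Chaining the two inequalities yields $\tfrac{\mu}{2}\|x-\overline{x}\|^2 \leq \tfrac{\rho}{2}\|x-\overline{x}\|^2$, that is, $(\mu-\rho)\|x-\overline{x}\|^2\leq 0$. Since $\mu>\rho$, this forces $x=\overline{x}$, contradicting the choice of $x$. Hence $\nabla h(x)\neq 0$ for every $x \in \mathcal{L}_{\zeta}^h(\mathbb{R}^n,\overline{x})\setminus\{\overline{x}\}$.

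The argument is short and has no real obstacle. The one point needing care is the justification of the first-order inequality: I would use the elementary limit derivation from \eqref{eq:quasar-wconvex}, since it needs only differentiability at $x$ and sidesteps the omitted proof of Proposition~\ref{theo:char}. One should also make explicit that the $\zeta$ in the conclusion is the same $\zeta$ for which \eqref{eq:QG} holds.
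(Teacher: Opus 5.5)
Your proof is correct and follows the paper's argument. The paper likewise combines the first-order inequality of Corollary~\ref{cor:star-first_order} with \eqref{eq:QG}: it first derives $\langle \nabla h(x), x-\overline{x}\rangle \geq \tfrac{\gamma(\mu-\rho)}{2}\|x-\overline{x}\|^2$ and then sets $\nabla h(x)=0$, whereas you set the gradient to zero before chaining the inequalities. Your limit derivation of the first-order bound directly from \eqref{eq:quasar-wconvex} is a nice addition. It makes self-contained the only direction actually needed, and it uses only differentiability at $x$, so you avoid relying on the omitted proof of Proposition~\ref{theo:char}.
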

%-----------
\begin{proof}
Let $x \in \mathcal{L}_{\zeta}^h (\mathbb{R}^n, \overline{x})$. By the first-order characterization in Corollary~\ref{cor:star-first_order}, we have (the fundamental gradient bound)
\begin{equation}\label{eq:grad_ineq_wqc}
    \langle \nabla h(x), x - \overline{x} \rangle \geq \gamma(h(x) - h(\overline{x})) - \tfrac{\gamma\rho}{2} \|x - \overline{x}\|^2.
\end{equation}
Since $x \in \mathcal{L}_{\zeta}^h (\mathbb{R}^n,\overline{x})$, the local quadratic growth condition \eqref{eq:QG} applies, giving $h(x) - h(\overline{x}) \geq \tfrac{\mu}{2}\|x - \overline{x}\|^2$. Because $\gamma > 0$, we can substitute this lower bound into \eqref{eq:grad_ineq_wqc} to obtain
\begin{equation*}
    \langle \nabla h(x), x - \overline{x} \rangle \geq \gamma\left(\tfrac{\mu}{2}\|x - \overline{x}\|^2\right) - \tfrac{\gamma\rho}{2} \|x - \overline{x}\|^2 = \tfrac{\gamma(\mu - \rho)}{2} \|x - \overline{x}\|^2.
\end{equation*}
Now, suppose for the contrary that there exists a spurious stationary point $x \in \mathcal{L}_{\zeta}^h (\mathbb{R}^n,\overline{x})$ with $x \neq \overline{x}$. Hence, $\nabla h(x) = 0$. Also, since $\gamma > 0$ and $\mu > \rho$ by assumption, we get $\gamma(\mu - \rho) > 0$, and since $x \neq \overline{x}$, $\|x - \overline{x}\|^2 > 0$.  Consequently,
\begin{equation*}
    0 \geq \tfrac{\gamma(\mu - \rho)}{2} \|x - \overline{x}\|^2 > 0,
\end{equation*}
a contradiction. Therefore, no such spurious stationary point can exist in $\mathcal{L}_{\zeta}^h (\mathbb{R}^n,\overline{x})$.
\end{proof}
%------------end proof--------------%

%________________________%
%       Remark           %
%________________________%
\begin{remark}[Alternative Sufficient Conditions for Local QG] \label{rem:alternative_sufficient_QG}
 Several other well-known regularity conditions guarantee the local quadratic growth condition \eqref{eq:QG}. Notably, \emph{local strong (quasar-)convexity} inherently enforces a quadratic lower bound, directly satisfying the QG condition. Furthermore, the \emph {Restricted Secant Inequality (RSI)} ensures that the gradient points sufficiently toward the optimal set; as established in the literature, strong quasar-convexity and strong star quasi-convexity imply the RSI, which is fundamentally linked to and often used to derive quadratic growth \cite[Properties 6.13, 6.14, 6.17, 6.18]{khademi2026convexity}. These conditions collectively form a hierarchy of regularity that relaxes classical strong convexity while preserving the favorable algorithmic guarantees provided by local QG.
 \qed
\end{remark}
%------------------------

In the following result, we provide a sufficient condition for weakly quasar-convex functions to satisfies the RSI property locally around the minimizer.
%________________________%
%       proposition      %
%________________________%
\begin{proposition} \label{prop:LQG_WQC_implies_RSI}
 Let $h: \mathbb{R}^n \to \mathbb{R}$ be a continuously differentiable and $(\rho,\gamma)$-weakly quasar-convex function with respect to $\overline{x} \in \argmin h$, with $\rho\geq 0$ and $\gamma \in ]0,1]$. If $h$ satisfies the $\mu$-local quadratic growth condition \eqref{eq:QG} on $\mathcal{L}_{\zeta}^h(\mathbb{R}^n, \overline{x})$ for some $\zeta > 0$ and $\mu > \rho$, then $h$ satisfies RSI on $\mathcal{L}_{\zeta}^h(\mathbb{R}^n, \overline{x})$ with parameter $\nu = \tfrac{\gamma (\mu - \rho)}{2} > 0$, that is,
\begin{equation}\label{eq:RSI_derived}
 \langle \nabla h(x), x - \overline{x} \rangle \geq \tfrac{\gamma (\mu - \rho)}{2} \|x - \overline{x} \|^2, ~ \forall  x \in \mathcal{L}_{\zeta}^h (\mathbb{R}^n, \overline{x}).
\end{equation}
\end{proposition}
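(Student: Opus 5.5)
The argument combines the smooth first-order characterization of Corollary~\ref{cor:star-first_order} with the local quadratic growth condition \eqref{eq:QG}. It is essentially the computation already carried out inside the proof of Proposition~\ref{prop:no_spurious_stationary_points}, now stated as a quantitative inequality rather than a nonvanishing statement.

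\textbf{Step 1.} Fix $x \in \mathcal{L}_{\zeta}^h(\mathbb{R}^n, \overline{x})$. Since $h$ is continuously differentiable and $(\rho,\gamma)$-weakly quasar-convex with respect to $\overline{x}$, Corollary~\ref{cor:star-first_order} gives
\[
h(\overline{x}) \geq h(x) + \tfrac{1}{\gamma}\langle \nabla h(x), \overline{x}-x\rangle - \tfrac{\rho}{2}\|x-\overline{x}\|^2 .
\]
Multiplying by $\gamma>0$ and rearranging yields
\[
\langle \nabla h(x), x-\overline{x}\rangle \geq \gamma\bigl(h(x)-h(\overline{x})\bigr) - \tfrac{\gamma\rho}{2}\|x-\overline{x}\|^2 ,
\]
which is exactly \eqref{eq:grad_ineq_wqc}.

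\textbf{Step 2.} Because $x$ lies in the sublevel set, \eqref{eq:QG} gives $h(x)-h(\overline{x}) \geq \tfrac{\mu}{2}\|x-\overline{x}\|^2$. Since $\gamma>0$, this lower bound can be substituted into the first term, giving
\[
\langle \nabla h(x), x-\overline{x}\rangle \geq \tfrac{\gamma(\mu-\rho)}{2}\|x-\overline{x}\|^2 .
\]
This is \eqref{eq:RSI_derived}. Positivity of $\nu=\gamma(\mu-\rho)/2$ follows from $\gamma>0$ and $\mu>\rho$. The case $x=\overline{x}$ is trivial, since both sides vanish.

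\textbf{Main obstacle.} There is no real difficulty here. The one point to check is that Corollary~\ref{cor:star-first_order} is applicable with $\dom(h)=\mathbb{R}^n$, which is convex. One should also note that the characterization is used only in its "only if" direction, which holds pointwise at every $x$, and in particular on the sublevel set.
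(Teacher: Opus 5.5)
Your proposal is correct and follows the paper's proof essentially step for step. Both proofs rearrange the first-order characterization of Corollary~\ref{cor:star-first_order} into the gradient bound and then substitute the quadratic growth lower bound \eqref{eq:QG}, using $\gamma>0$.
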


%------------proof--------------%
\begin{proof}
 By the first-order characterization of $(\rho, \gamma)$-weakly quasar-convexity (Corollary \ref{cor:star-first_order}), we have 
\begin{equation*}
 \langle \nabla h(x), x - \overline{x} \rangle \geq \gamma \bigl( h(x) - h(\overline{x}) \bigr) - \tfrac{\gamma \rho}{2} \|x - \overline{x}\|^2, ~ \forall  x \in \mathbb{R}^{n}.
\end{equation*}
Since the local quadratic growth condition \eqref{eq:QG} holds, there exists $\zeta > 0$ such that for all $x \in \mathcal{L}_{\zeta}^h(\mathbb{R}^n, \overline{x})$ we have $h(x) - h(\overline{x}) \geq \tfrac{\mu}{2}\|x - \overline{x}\|^2$. Using this in the previous inequality and since $\gamma > 0$, we obtain
\begin{equation*}
    \langle \nabla h(x), x - \overline{x} \rangle \geq \gamma\left(\tfrac{\mu}{2}\|x - \overline{x}\|^2\right) - \tfrac{\gamma\rho}{2} \|x - \overline{x}\|^2 = \tfrac{\gamma(\mu - \rho)}{2} \|x - \overline{x}\|^2,
\end{equation*}
which completes the proof.
\end{proof}
%------------end proof--------------%

%%==========================%%
%%       Moreau Envelope    %%
%%==========================%%
\subsection{Moreau Envelope}

We recall the following well-known property of the Moreau envelope (e.g., see \cite{KabganiAhookhosh2026}), and then show that this smoothing operation also preserves weak quasar-convexity.
%________________________%
%       Lemma            %
%________________________%
\begin{lemma}\label{lem:moreau-argmin}
 Let $h$ be proper lower semicontinuous function, and $\overline{x} \in \argmin_{\mathbb{R}^n} h$. Then, for every ${\beta}>0$, $h^{\beta}$ is finite-valued on $\mathbb{R}^n$, $h^{\beta}(\overline{x})=h(\overline{x})$, and $\argmin_{\mathbb{R}^n} h^{\beta} = \argmin_{\R^n} h$.
\end{lemma}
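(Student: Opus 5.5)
The proof splits into three claims, taken in order: finiteness of $h^{\beta}$, the identity $h^{\beta}(\overline{x})=h(\overline{x})$, and equality of the argmin sets. The one fact used throughout is that $\overline{x}$ minimizes $h$, so $h$ is bounded below by $m:=h(\overline{x})$. Note that $m$ is finite, since $h$ is proper.

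\emph{Finiteness.} Fix $x\in\mathbb{R}^n$.
\begin{itemize}
\item For the upper bound, choose $y=\overline{x}$ in the infimum. This gives $h^{\beta}(x)\le h(\overline{x})+\tfrac{1}{2\beta}\|x-\overline{x}\|^2<+\infty$.
\item For the lower bound, note that every $y$ satisfies $h(y)+\tfrac{1}{2\beta}\|x-y\|^2\ge m$. Hence $h^{\beta}(x)\ge m>-\infty$.
\end{itemize}
Thus $h^{\beta}$ is finite-valued, and moreover $h^{\beta}\ge m$ everywhere. Lower semicontinuity is not needed for this part. It would only be used to guarantee that the infimum is attained, which the statement does not require.

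\emph{Value at $\overline{x}$.} Setting $x=\overline{x}$ in the upper bound gives $h^{\beta}(\overline{x})\le h(\overline{x})$. The lower bound gives $h^{\beta}(\overline{x})\ge m=h(\overline{x})$. Therefore $h^{\beta}(\overline{x})=h(\overline{x})$. Since $h^{\beta}\ge m$ everywhere, $\overline{x}$ is a minimizer of $h^{\beta}$ and $\min h^{\beta}=\min h=m$.

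\emph{Equality of argmin sets.} The two inclusions use opposite bounds.
\begin{itemize}
\item \textbf{$\argmin h\subseteq\argmin h^{\beta}$.} Let $z\in\argmin h$. Then $h(z)=m$, so applying the previous step with $z$ in place of $\overline{x}$ gives $h^{\beta}(z)=m=\min h^{\beta}$.
\item \textbf{$\argmin h^{\beta}\subseteq\argmin h$.} Let $z\in\argmin h^{\beta}$, so $h^{\beta}(z)=m$. Take a minimizing sequence $y_k$ with
$$h(y_k)+\tfrac{1}{2\beta}\|z-y_k\|^2\to m.$$
Since $h(y_k)\ge m$, it follows that $\|z-y_k\|\to0$ and $h(y_k)\to m$. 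Lower semicontinuity of $h$ then yields $h(z)\le\liminf_k h(y_k)=m$. Hence $h(z)=m$, i.e. $z\in\argmin h$.
\end{itemize}

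The only delicate point is this reverse inclusion. It is where lower semicontinuity is essential, because without it the infimum defining $h^{\beta}(z)$ could be approached by points $y_k\to z$ with $h(z)>m$. Everything else is direct substitution into the definition of $h^{\beta}$.
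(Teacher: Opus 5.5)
Your proof is correct and complete. The paper gives no proof of this lemma; it recalls it as a well-known property of the Moreau envelope and cites the literature, so your argument supplies what the paper omits.

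Your argument is essentially the standard one. The only point of comparison is the reverse inclusion $\argmin h^{\beta}\subseteq\argmin h$.
\begin{itemize}
\item The usual textbook route takes a proximal point $p\in\Prox_{\beta h}(z)$. Its existence follows from lower semicontinuity together with the coercivity bound of Lemma~\ref{coercivity:2}. One then reads off $m=h^{\beta}(z)=h(p)+\tfrac{1}{2\beta}\|z-p\|^2\ge m+\tfrac{1}{2\beta}\|z-p\|^2$, which forces $p=z$ and $h(z)=m$.
\item Your minimizing-sequence argument reaches the same conclusion without needing the infimum to be attained. It also uses lower semicontinuity only at the single point $z$, so it is marginally more economical.
\end{itemize}

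Your remark that finiteness and the identity $h^{\beta}(\overline{x})=h(\overline{x})$ need no semicontinuity is also right. Lower semicontinuity is genuinely needed for the reverse inclusion: the function equal to $1$ at the origin and $0$ elsewhere has $h^{\beta}\equiv 0$, yet the origin is not in $\argmin h$.

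One phrase could be sharpened. Finiteness of $m=h(\overline{x})$ uses both halves of properness: $h>-\infty$ gives $m>-\infty$, and $\dom(h)\neq\emptyset$ combined with the minimality of $\overline{x}$ gives $m<+\infty$.
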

%-----------------------

We now show that the Moreau envelope preserves the weak quasar-convexity of the original function.

%________________________%
%       Theorem          %
%________________________%
\begin{theorem}[Moreau Envelope Preserves Weak Quasar-Convexity] \label{prop:moreau-structural}
Let $h: \R^n \to \overline{\mathbb{R}}$ be proper, lower semicontinuous, and $(\rho, \gamma)$-weakly quasar-convex with respect to $\overline{x} \in \argmin_{\R^n} h$, with $\rho\geq 0$ and $\gamma \in ]0,1]$. Let 
\begin{equation}\label{param:cond:2}
 0 < \beta < \tfrac{2-\gamma}{\rho\gamma},
\end{equation} 
with the convention that the right-hand side is $+\infty$ when $\rho = 0$. Then $h^{\beta}$ is $(\rho_{\beta}, \gamma)$-weakly quasar-convex with respect to the same point $\overline{x} \in \argmin_{\R^n} h^{\beta}$, where
\begin{equation}\label{eq:rho-mu}
 \rho_{\beta} = \frac{(2-\gamma)\rho}{(2-\gamma) - \gamma \rho {\beta}} \ge 0 .
\end{equation}
\end{theorem}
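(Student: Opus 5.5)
The plan is a direct argument on the definition of the Moreau envelope. I transport a near-optimal point of the infimum defining $h^{\beta}(x)$ along the segment toward $\overline{x}$, then absorb the error with a weighted Young-type inequality. By Lemma~\ref{lem:moreau-argmin}, $h^{\beta}$ is finite, $h^{\beta}(\overline{x})=h(\overline{x})$, and $\overline{x}\in\argmin_{\R^n} h^{\beta}$, so the anchor point is legitimate. Throughout, set $c:=\tfrac{\gamma\rho}{2(2-\gamma)}$ and $a_\lambda:=\lambda(2-\gamma-\lambda)\ge 0$. The identity $(1-\gamma\lambda)-(1-\lambda)^2=a_\lambda$ from the proof of Proposition~\ref{prop:weak-quasar-shift} gives $\tfrac{\gamma\rho}{2}\lambda\bigl(1-\tfrac{\lambda}{2-\gamma}\bigr)=c\,a_\lambda$.

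Fix $x\in\R^n$, $\lambda\in[0,1]$ and $\varepsilon>0$, and write $z_\lambda=\lambda\overline{x}+(1-\lambda)x$. Choose $y\in\dom(h)$ with $h(y)+\tfrac{1}{2\beta}\|x-y\|^2\le h^{\beta}(x)+\varepsilon$, and set $y_\lambda:=\lambda\overline{x}+(1-\lambda)y$. By the extended-valued reading of Definition~\ref{def:wquasar-convex}, $y_\lambda\in\dom(h)$. Since $z_\lambda-y_\lambda=(1-\lambda)(x-y)$, the definition of $h^{\beta}$ and weak quasar-convexity of $h$ at $y$ give
\begin{equation*}
h^{\beta}(z_\lambda)\le \gamma\lambda h(\overline{x})+(1-\gamma\lambda)h(y)+c\,a_\lambda\|y-\overline{x}\|^2+\tfrac{(1-\lambda)^2}{2\beta}\|x-y\|^2 .
\end{equation*}
Next I write $(1-\lambda)^2=(1-\gamma\lambda)-a_\lambda$ and use the choice of $y$ together with $h(\overline{x})=h^{\beta}(\overline{x})$. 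This yields
\begin{equation*}
h^{\beta}(z_\lambda)\le \gamma\lambda h^{\beta}(\overline{x})+(1-\gamma\lambda)\bigl(h^{\beta}(x)+\varepsilon\bigr)+a_\lambda\Bigl(c\|y-\overline{x}\|^2-\tfrac{1}{2\beta}\|x-y\|^2\Bigr).
\end{equation*}

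The key step, and the only real obstacle, is to bound the bracket by $c_\beta\|x-\overline{x}\|^2$, where $c_\beta:=\tfrac{\gamma\rho_\beta}{2(2-\gamma)}$. If $\rho=0$ the bracket is nonpositive and there is nothing to do. If $\rho>0$, I use $\|y-\overline{x}\|^2\le(1+t)\|x-\overline{x}\|^2+(1+t^{-1})\|x-y\|^2$ for $t>0$, with $t$ chosen so that $c(1+t^{-1})=\tfrac{1}{2\beta}$. This forces $t^{-1}=\tfrac{2-\gamma-\gamma\rho\beta}{\gamma\rho\beta}$, which is positive exactly under \eqref{param:cond:2}. Then $1+t=\tfrac{2-\gamma}{2-\gamma-\gamma\rho\beta}$, so
\begin{equation*}
c(1+t)=\tfrac{\gamma\rho}{2(2-\gamma-\gamma\rho\beta)}=\tfrac{\gamma\rho_\beta}{2(2-\gamma)},
\end{equation*}
which matches \eqref{eq:rho-mu}. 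With this choice the $\|x-y\|^2$ terms cancel and the bracket is at most $c_\beta\|x-\overline{x}\|^2$.

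Since $a_\lambda c_\beta=\tfrac{\gamma\rho_\beta}{2}\lambda\bigl(1-\tfrac{\lambda}{2-\gamma}\bigr)$, I obtain inequality \eqref{eq:quasar-wconvex} for $h^{\beta}$ with modulus $(\rho_\beta,\gamma)$, up to the additive term $(1-\gamma\lambda)\varepsilon$. Letting $\varepsilon\downarrow 0$ concludes the proof. No attainment of the infimum is needed, so only properness and lower semicontinuity, via Lemma~\ref{lem:moreau-argmin}, are used.
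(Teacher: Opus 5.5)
Your proof is correct and follows the paper's argument essentially step for step. You take an $\varepsilon$-near-optimal point $y$ for $h^{\beta}(x)$, move it to $y_\lambda=\lambda\overline{x}+(1-\lambda)y$, apply weak quasar-convexity at $y$, and use $(1-\lambda)^2=(1-\gamma\lambda)-\lambda(2-\gamma-\lambda)$ to isolate the bracket $c\|y-\overline{x}\|^2-\tfrac{1}{2\beta}\|x-y\|^2$. The only difference is cosmetic: you bound this bracket with a Young inequality whose weight is tuned to cancel the $\|x-y\|^2$ terms, while the paper completes the square in $u=y-x$. The two computations are equivalent and give the same $\rho_\beta$ under the same condition $\gamma\rho\beta<2-\gamma$.
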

%-------------
\begin{proof}
By Lemma~\ref{lem:moreau-argmin} (applied with this particular ${\beta}>0$), $h^{\beta}$ is finite-valued, $h^{\beta}(\overline{x})=h(\overline{x})$, and $\overline{x}\in\argmin_{\R^n} h^{\beta}$. It remains to prove the chord inequality for $h^{\beta}$ with modulus $\rho_{\beta}$.
Fix $x\in\R^n$, $\lambda\in[0,1]$, and $\varepsilon>0$. Choose $y^\ast\in\R^n$ with
\begin{equation}\label{eq:near-opt}
h(y^\ast)+\tfrac{1}{2{\beta}}\|x-y^\ast\|^2 \leq h^{\beta}(x)+\varepsilon .
\end{equation}
Set $z_\lambda := \lambda\overline{x}+(1-\lambda)x$ and $y_\lambda := \lambda\overline{x}+(1-\lambda)y^\ast$.
Since $y_\lambda$ is feasible for $h^{\beta}(z_\lambda)$ and $z_\lambda-y_\lambda=(1-\lambda)(x-y^\ast)$,
\begin{equation}\label{eq:step21}
h^{\beta}(z_\lambda) \leq h(y_\lambda) + \tfrac{(1-\lambda)^2}{2{\beta}}\|x-y^\ast\|^2 .
\end{equation}
Applying \eqref{eq:quasar-wconvex} to the pair $(\overline{x},y^\ast)$, we have
$$
h(y_\lambda) \leq \gamma\lambda h(\overline{x}) + (1-\gamma\lambda)h(y^\ast) + \tfrac{\gamma\rho}{2}\lambda\Big(1-\tfrac{\lambda}{2-\gamma}\Big)\|y^\ast-\overline{x}\|^2 .
$$
Substituting into \eqref{eq:step21} and using $h(\overline{x})=h^{\beta}(\overline{x})$,
\begin{equation}\label{eq:step31}
h^{\beta}(z_\lambda) \leq \gamma\lambda h^{\beta}(\overline{x}) + (1-\gamma\lambda)h(y^\ast) + \tfrac{\gamma\rho}{2}\lambda\Big(1-\tfrac{\lambda}{2-\gamma}\Big)\|y^\ast-\overline{x}\|^2 + \tfrac{(1-\lambda)^2}{2{\beta}}\|x-y^\ast\|^2 .
\end{equation}
From \eqref{eq:near-opt}, $h(y^\ast)\leq h^{\beta}(x)+\varepsilon-\tfrac{1}{2{\beta}}\|x-y^\ast\|^2$. Since $1-\gamma\lambda\ge0$, substitute into \eqref{eq:step31} we get
\begin{align}
h^{\beta}(z_\lambda) &\leq \gamma\lambda h^{\beta}(\overline{x})+(1-\gamma\lambda)\big(h^{\beta}(x)+\varepsilon\big) + \tfrac{\gamma\rho}{2}\lambda\Big(1-\tfrac{\lambda}{2-\gamma}\Big)\|y^\ast-\overline{x}\|^2 \nonumber\\
&\qquad + \tfrac{1}{2{\beta}}\big((1-\lambda)^2-(1-\gamma\lambda)\big)\|x-y^\ast\|^2 . \label{eq:step41}
\end{align}
Since $(1-\lambda)^2-(1-\gamma\lambda) = -\lambda(2-\gamma-\lambda)$, $\lambda\in[0,1]$ and $\gamma \in   ]0,1]$, we have $2-\gamma-\lambda\ge1-\gamma\ge0$, this bracket is $\le0$. Let $c := \tfrac{ \gamma\rho}{2-\gamma}\ge0$, so $\tfrac{\gamma\rho}{2}\big(1-\tfrac{\lambda}{2-\gamma}\big)=\tfrac{c}{2}(2-\gamma-\lambda)$. Factor $\tfrac{\lambda(2-\gamma-\lambda)}{2}$ from the two quadratic terms in \eqref{eq:step41} gives
$$ \tfrac{c}{2}(2-\gamma-\lambda)\lambda\|y^\ast-\overline{x}\|^2 - \tfrac{\lambda(2-\gamma-\lambda)}{2{\beta}}\|x-y^\ast\|^2 = \tfrac{\lambda(2-\gamma-\lambda)}{2}\Big(c\|y^\ast-\overline{x}\|^2 - \tfrac1{\beta}\|x-y^\ast\|^2\Big).$$
Set $u:=y^\ast-x$ and $v:=x-\overline{x}$, so $y^\ast-\overline{x} = u+v$, and
$$ c \|y^\ast-\overline{x}\|^2-\tfrac1{\beta}\|x-y^\ast \|^2 = -\Big(\tfrac1{\beta}-c\Big)\|u\|^2 + 2 c\langle u,v\rangle + c\|v\|^2.$$
Let $A := \tfrac1{\beta} - c$. The assumption \eqref{param:cond:2} is equivalent to $c\beta<1$. Indeed, if $\rho=0$, then $c=0$, and if $\rho>0$, then $\beta<\tfrac{2-\gamma}{\gamma\rho}$ if and only if $\tfrac{\gamma\rho\beta}{2-\gamma}<1$. Hence, $A = \tfrac1\beta-c=\tfrac{1-c\beta}{\beta}>0$.

 We can maximize this quadratic form over $u$ by completing the square
\begin{align*}
 - A \|u\|^2 + 2c\langle u, v \rangle + c\|v\|^2 &= -A \left\|u - \tfrac{c}{A} v \right\|^2 + \tfrac{c^2}{A} \|v\|^2 + c\|v\|^2 \leq c \left(\tfrac{c+A}{A} \right)\|v\|^2.
\end{align*}
Substituting $A = \tfrac1{\beta} - c$ back into the scalar coefficient yields
$c\tfrac{(c + (\tfrac{1}{\beta} - c))}{(\tfrac{1}{\beta} - c)} = \tfrac{c}{1 - c {\beta}}$. Then, we obtain the tight upper bound
$$ c\|y^\ast-\overline{x}\|^2-\tfrac1{\beta}\|x-y^\ast\|^2 \leq \tfrac{c}{1-c{\beta}}\|x-\overline{x}\|^2 .$$
Hence,
$$
\tfrac{\gamma\rho}{2}\lambda\Big(1-\tfrac{\lambda}{2-\gamma}\Big)\|y^\ast-\overline{x}\|^2 - \tfrac{\lambda(2-\gamma-\lambda)}{2{\beta}}\|x-y^\ast\|^2 \leq \tfrac{\lambda(2-\gamma-\lambda)}{2}\cdot\tfrac{c}{1-c{\beta}}\|x-\overline{x}\|^2 .
$$
Substituting into \eqref{eq:step41} and using $\lambda(2-\gamma-\lambda)=(2-\gamma)\lambda\big(1-\tfrac{\lambda}{2-\gamma}\big)$, we have
\begin{align*}
 & \hspace{0.8cm} h^{\beta}(z_\lambda) \leq \gamma \lambda h^{\beta}(\overline{x})+(1-\gamma \lambda) h^{\beta}(x) + (1-\gamma \lambda) \varepsilon + \tfrac{c (2-\gamma)}{2 (1-c{\beta})}\lambda \Big(1-\tfrac{\lambda}{2-\gamma} \Big) \|x-\overline{x}\|^2 \\
 & \overset{\varepsilon \downarrow 0}{\Longrightarrow} \, h^{\beta}(z_\lambda) \leq \gamma\lambda h^{\beta}(\overline{x}) + (1-\gamma\lambda)h^{\beta}(x) + \tfrac{c(2-\gamma)}{2(1-c{\beta})}\lambda\Big(1-\tfrac{\lambda}{2-\gamma}\Big)\|x-\overline{x}\|^2 .
\end{align*}
Comparing with \eqref{eq:quasar-wconvex}, this is the $(\rho_{\beta},\gamma)$-weak quasar-convexity inequality for $h^{\beta}$, with
$$
\tfrac{\gamma\rho_{\beta}}{2} = \tfrac{c(2-\gamma)}{2(1-c{\beta})} \iff \rho_{\beta} = \tfrac{(2-\gamma)c}{\gamma(1-c{\beta})} = \tfrac{\rho}{1-c{\beta}},
$$
using $c = \tfrac{\gamma \rho}{2-\gamma}$. This is exactly \eqref{eq:rho-mu}. Since $x\in\R^n$ and $\lambda \in [0,1]$ were arbitrary, the proof is complete.
\end{proof}

%------------end proof--------------%
\begin{remark}\label{rem:beta-range}
Let $\rho\ge 0$ and $\gamma \in ]0,1]$, with the convention that both $\tfrac{1}{\rho}$ and $\tfrac{2-\gamma}{\gamma \rho}$ equal $+\infty$ when $\rho=0$.
Since $\gamma \leq 2-\gamma$, it holds that
\[
 \tfrac{1}{\rho} \leq \tfrac{2-\gamma}{\gamma\rho},
\]
with equality if and only if $\gamma=1$ (or $\rho=0$). Consequently, Theorem~\ref{prop:moreau-structural} applies in particular for every $0< \beta <\tfrac{1}{\rho}$, and the same conclusion remains valid. In this range, the relation $c= \tfrac{\gamma\rho}{2-\gamma} \le \rho$ yields the simpler (yet weaker) upper bound
\[
 \rho_{\beta} = \tfrac{\rho}{1-c\beta} \leq \tfrac{ \rho}{1-\rho\beta}.
\]
Condition \eqref{param:cond:2} is precisely the requirement needed in the proof, namely $A=\tfrac{1}{\beta}-c>0$.
\qed
\end{remark}
%%==========================%%
%%    Proximity Operator    %%
%%==========================%%
\subsection{On the Proximity Operator}

In this subsection, we present the basic properties of the proximity operator of a weak quasar-convex function.

As a first result, we show a sufficient condition for the supercoercivity of the proximity operator of a weakly quasar-convex function.

\begin{lemma}\label{coercivity:2}
Let $h:\R^n\to\overline{\R}$ be proper with $\overline{x}\in\argmin_{\R^n}h$, and let $\beta>0$.
Then, for every $x\in\R^n$, the function
$\Phi(y):=h(y)+\tfrac{1}{2\beta}\|x-y\|^2$ is $2$-supercoercive; more precisely,
\begin{equation}\label{eq:2-supercoercive}
  \Phi(y)\ \ge\ h(\overline{x})+\tfrac{1}{2\beta}\|x-y\|^2\quad\forall y\in\R^n,
  \qquad\text{hence}\qquad
  \liminf_{\|y\|\to\infty}\frac{\Phi(y)}{\|y\|^2}\ \ge\ \frac{1}{2\beta}>0 .
\end{equation}
\end{lemma}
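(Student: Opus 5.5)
The inequality in \eqref{eq:2-supercoercive} is immediate from the fact that $\overline{x}$ is a global minimizer, and no weak quasar-convexity is needed. Fix $x\in\R^n$. For every $y\in\R^n$ we have $h(y)\ge h(\overline{x})$, because $\overline{x}\in\argmin_{\R^n}h$. Properness gives $h(\overline{x})>-\infty$, and since $\overline{x}$ minimizes $h$ and $\dom(h)\neq\emptyset$, also $h(\overline{x})<+\infty$; hence $h(\overline{x})\in\R$. Adding $\tfrac{1}{2\beta}\|x-y\|^2$ to both sides yields
\[
\Phi(y)\ \ge\ h(\overline{x})+\tfrac{1}{2\beta}\|x-y\|^2 \qquad \forall\, y\in\R^n,
\]
under the usual extended arithmetic. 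When $h(y)=+\infty$ the inequality holds trivially.

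For the $\liminf$ claim, I divide by $\|y\|^2$ for $y\neq 0$:
\[
\frac{\Phi(y)}{\|y\|^2}\ \ge\ \frac{h(\overline{x})}{\|y\|^2}+\frac{1}{2\beta}\,\frac{\|x-y\|^2}{\|y\|^2}.
\]
As $\|y\|\to\infty$ with $x$ fixed, the first term tends to $0$ because $h(\overline{x})$ is a finite constant. For the second term, the triangle inequality gives $\|y\|-\|x\|\le\|x-y\|\le\|y\|+\|x\|$, so $\|x-y\|/\|y\|\to1$ and the second term tends to $\tfrac{1}{2\beta}$. Taking $\liminf$ and using superadditivity of $\liminf$ under a convergent lower bound gives $\liminf_{\|y\|\to\infty}\Phi(y)/\|y\|^2\ge\tfrac{1}{2\beta}>0$, which is exactly $2$-supercoercivity.

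There is no real obstacle here. The only point needing care is the finiteness of $h(\overline{x})$, which properness provides, so that the term $h(\overline{x})/\|y\|^2$ vanishes and no indeterminate form $\infty-\infty$ appears.
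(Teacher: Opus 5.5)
Your proposal is correct and follows essentially the same route as the paper: bound $h(y)\ge h(\overline{x})$, then divide by $\|y\|^2$ and use $\|x-y\|/\|y\|\to 1$. Your explicit check that $h(\overline{x})\in\R$ via properness, so that $h(\overline{x})/\|y\|^2\to 0$, is a detail the paper leaves implicit.
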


\begin{proof}
The first inequality follows from $h(y)\ge h(\overline{x})$. Dividing by $\|y\|^2$ and using
$\|x-y\|^2/\|y\|^2\to1$ as $\|y\|\to\infty$ gives the second one.
\end{proof}

\begin{remark}
Lemma~\ref{coercivity:2} does not use weak quasar-convexity, nor any restriction on $\beta$:
it only uses that $\overline{x}$ is a global minimizer of $h$. In particular, the proximal
subproblems in Corollary~\ref{prox:nonempty} and Algorithm~\ref{Alg: PPM} are solvable for
every $\beta>0$, and the upper bound $\beta_k<1/\rho$ in \eqref{param:cond-2} can be dropped.
\end{remark}

As a consequence of Lemma~\ref{coercivity:2} and the lower semicontinuity of $h$. 

%________________________%
%       Corollary        %
%________________________%
\begin{corollary}\label{prox:nonempty}
Let $\mathcal{K}\subseteq\R^n$ be a nonempty closed convex set and let $h:\R^n\to\overline{\R}$ be proper and
lower semicontinuous, with $\mathcal{K}\cap\dom(h)\neq\emptyset$ and $\overline{x}\in\argmin_{\mathcal{K}} h$. Then, for every
$\beta>0$ and every $x\in\R^n$, the set $\Prox_{\beta h}(\mathcal{K},x)$ is nonempty and compact. In particular,
this holds for every $(\rho,\gamma)$-weakly quasar-convex function with respect to $\overline{x}$,
without any restriction on $\beta$.
\end{corollary}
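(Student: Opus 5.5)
The plan is the direct method of the calculus of variations applied to the constrained proximal objective. Fix $\beta>0$ and $x\in\R^n$, and set
\[
\Phi(y):=h(y)+\tfrac{1}{2\beta}\|x-y\|^2+\iota_{\mathcal{K}}(y),
\]
where $\iota_{\mathcal{K}}$ denotes the indicator of $\mathcal{K}$ (written out explicitly, since the paper has no macro for it). Then $\Prox_{\beta h}(\mathcal{K},x)=\argmin_{\R^n}\Phi$. Three properties will be established in turn. First, $\Phi$ is proper, because any $y_0\in\mathcal{K}\cap\dom(h)$ gives $\Phi(y_0)<+\infty$, and $\Phi>-\infty$ everywhere since $h$ is proper. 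Second, $\Phi$ is lower semicontinuous, as a sum of the lsc function $h$, a continuous quadratic, and the indicator of the closed set $\mathcal{K}$, all of which are bounded below.

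Third, and this is the only delicate point, $\Phi$ is coercive. Lemma~\ref{coercivity:2} is stated with $\overline{x}\in\argmin_{\R^n}h$, whereas here only $\overline{x}\in\argmin_{\mathcal{K}}h$ is assumed. I would therefore redo its one-line argument on $\mathcal{K}$. For $y\in\mathcal{K}$ one has $\Phi(y)\ge h(\overline{x})+\tfrac{1}{2\beta}\|x-y\|^2$, and for $y\notin\mathcal{K}$ one has $\Phi(y)=+\infty$. Here $h(\overline{x})$ is finite, because $\overline{x}$ minimizes $h$ over $\mathcal{K}$ and $\mathcal{K}\cap\dom(h)\neq\emptyset$. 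Consequently every sublevel set $\{y:\Phi(y)\le \alpha\}$ is contained in the ball $\{y:\|x-y\|^2\le 2\beta(\alpha-h(\overline{x}))\}$, so it is bounded; it is also closed by lower semicontinuity.

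With these three properties, I take $\alpha=\Phi(y_0)$. The sublevel set at level $\alpha$ is then nonempty and compact, and $\Phi$ attains its infimum on it by Weierstrass' theorem for lsc functions; this infimum is the global minimum of $\Phi$. Hence $\Prox_{\beta h}(\mathcal{K},x)\neq\emptyset$. Moreover, $\Prox_{\beta h}(\mathcal{K},x)$ equals the sublevel set $\{\Phi\le\min\Phi\}$, which is closed and bounded, hence compact.

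For the final assertion, a $(\rho,\gamma)$-weakly quasar-convex function in the sense of Definition~\ref{def:wquasar-convex} has $\overline{x}\in\argmin_{\R^n}h$ by definition. Taking $\mathcal{K}=\R^n$ (or any closed convex $\mathcal{K}\ni\overline{x}$) puts it within the hypotheses above, provided it is proper and lsc, and no restriction on $\beta$ was used anywhere. The main obstacle is only the mismatch between $\argmin_{\mathcal{K}}$ and the global $\argmin$ in Lemma~\ref{coercivity:2}, which the localized bound resolves.
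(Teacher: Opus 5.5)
Your proof is correct and follows the paper's argument: the same objective $\Phi=h+\iota_{\mathcal{K}}+\tfrac{1}{2\beta}\|x-\cdot\|^2$, the same localized bound $h(y)\ge h(\overline{x})$ for $y\in\mathcal{K}$ giving coercivity, and then nonempty compact minimizer sets for a proper, lsc, coercive function. One small slip: $h$ itself need not be bounded below on $\R^n$, but lower semicontinuity of the sum only requires that no summand takes the value $-\infty$ (alternatively, note that $h+\iota_{\mathcal{K}}\ge h(\overline{x})$), so your conclusion is unaffected.
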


\begin{proof}
 For every $y\in \mathcal{K}$ we have $h(y)\ge h(\overline{x})$, hence $h(y)+\tfrac{1}{2\beta} \|x-y\|^2 \ge h(\overline{x})+\tfrac{1}{2\beta}\|x-y\|^2$, exactly as in Lemma \ref{coercivity:2}. Thus $\Phi:=h+\iota_{\mathcal{K}}+\tfrac{1}{2\beta}\|x-\cdot\|^2$ is proper, lower semicontinuous and coercive, so its set of minimizers is nonempty and compact.
\end{proof}

 Note that ${\rm Prox}_{\beta h} (\cdot, x)$ may be set-valued even for (strongly) quasar-convex functions as noted in \cite[Remark 14]{de2025extending}.

%________________________%
%       proposition      %
%________________________%
\begin{proposition} \label{prop:quasar-prox-char}
Let $\mathcal{K} \subseteq \mathbb{R}^{n}$ be a closed convex set and $h: \R^n \to \overline{\R}$ be a proper, lower semicontinuous and $(\rho, \gamma)$-weakly quasar-convex with respect to $\overline{x} \in \argmin_{\mathcal{K}} h$ ($\gamma \in   ]0, 1]$ and $\rho \geq 0$) on $\mathcal{K} \subseteq \dom(h)$. 
Let $z \in \mathcal{K}$ and $\beta > 0$. If $x^* \in \prox_{\beta h} (\mathcal{K}, z)$, then
\begin{equation}\label{eq:quasar-prox-char}
 h(x^*) - h(\overline{x}) \leq \tfrac{1}{\gamma\beta} \langle x^* - z, \overline{x} - x^* \rangle + \tfrac{\rho}{2} \| x^* - \overline{x} \|^{2}.
\end{equation}
\end{proposition}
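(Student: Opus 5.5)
The plan is to compare the proximal objective at $x^*$ with its value along the segment from $x^*$ toward $\overline{x}$, and then let the step length go to zero. Set
\[
\Phi(y) := h(y) + \tfrac{1}{2\beta}\|z-y\|^2 .
\]
For $\lambda \in ]0,1]$ put $y_\lambda := \lambda\overline{x} + (1-\lambda)x^*$. Since $\mathcal{K}$ is convex and $\overline{x}, x^* \in \mathcal{K}$, the point $y_\lambda$ lies in $\mathcal{K}$. Minimality of $x^*$ therefore gives $\Phi(x^*) \le \Phi(y_\lambda)$. Because $\mathcal{K}\subseteq\dom(h)$, all values involved are finite, and weak quasar-convexity \eqref{eq:quasar-wconvex} applies at the pair $(\overline{x},x^*)$:
\[
h(y_\lambda) \le \gamma\lambda h(\overline{x}) + (1-\gamma\lambda)h(x^*) + \tfrac{\gamma\rho}{2}\lambda\Big(1-\tfrac{\lambda}{2-\gamma}\Big)\|x^*-\overline{x}\|^2 .
\]

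Next I expand the quadratic term. We have $z - y_\lambda = (z-x^*) - \lambda(\overline{x}-x^*)$, so
\[
\|z-y_\lambda\|^2 = \|z-x^*\|^2 - 2\lambda\langle z-x^*,\overline{x}-x^*\rangle + \lambda^2\|\overline{x}-x^*\|^2 .
\]
Substitute this and the bound on $h(y_\lambda)$ into $\Phi(x^*) \le \Phi(y_\lambda)$. The terms $h(x^*)$ and $\tfrac{1}{2\beta}\|z-x^*\|^2$ cancel, leaving
\[
\gamma\lambda\big(h(x^*)-h(\overline{x})\big) \le \tfrac{\lambda}{\beta}\langle x^*-z,\overline{x}-x^*\rangle + \tfrac{\gamma\rho}{2}\lambda\Big(1-\tfrac{\lambda}{2-\gamma}\Big)\|x^*-\overline{x}\|^2 + \tfrac{\lambda^2}{2\beta}\|\overline{x}-x^*\|^2 .
\]
Here I used $-\langle z-x^*,\overline{x}-x^*\rangle = \langle x^*-z,\overline{x}-x^*\rangle$.

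Finally, divide by $\gamma\lambda>0$:
\[
h(x^*)-h(\overline{x}) \le \tfrac{1}{\gamma\beta}\langle x^*-z,\overline{x}-x^*\rangle + \tfrac{\rho}{2}\Big(1-\tfrac{\lambda}{2-\gamma}\Big)\|x^*-\overline{x}\|^2 + \tfrac{\lambda}{2\gamma\beta}\|\overline{x}-x^*\|^2 .
\]
Letting $\lambda \downarrow 0$ removes the last term and sends the factor $1-\tfrac{\lambda}{2-\gamma}$ to $1$, which yields \eqref{eq:quasar-prox-char}.

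There is no real obstacle in this argument. The only points needing care are the sign bookkeeping in the inner product, and checking that the extended-valued setting causes no trouble: $y_\lambda \in \mathcal{K}\subseteq\dom(h)$ keeps every quantity finite. Alternatively, one could use the Clarke characterization of Proposition \ref{theo:char} together with the optimality condition $0\in\partial^C h(x^*)+\tfrac{1}{\beta}(x^*-z)+N_{\mathcal{K}}(x^*)$. That route, however, needs local Lipschitz continuity and a normal-cone sign argument, so the direct segment comparison above is preferable.
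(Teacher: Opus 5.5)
Your proposal is correct and follows essentially the same route as the paper. The paper also compares the proximal objective at $x^*$ with its value at $\lambda\overline{x}+(1-\lambda)x^*$, applies weak quasar-convexity at the pair $(\overline{x},x^*)$, divides by $\lambda$ and lets $\lambda\downarrow 0$. The only difference is bookkeeping: you expand $\|z-y_\lambda\|^2$ directly around $x^*$, whereas the paper uses identity \eqref{iden:1} followed by the three-point identity \eqref{id:3points}; both reach the same intermediate inequality.
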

%------------------%
\begin{proof}
Since $x^* \in \prox_{\beta h} (\mathcal{K}, z)$, we have
\begin{equation}\label{eq:prox-ineq}
 h(x^*) + \tfrac{1}{2\beta} \| x^* - z \|^{2} \leq h(x) + \tfrac{1}{2\beta} \| x - z \|^{2}, ~ \forall  x \in \mathcal{K}.
\end{equation}
Fix $\lambda \in ]0, 1]$ and set $x = \lambda \overline{x} + (1-\lambda)x^* \in \mathcal{K}$. Since $h$ is  $(\rho, \gamma)$-weakly quasar-convex,
{\small\begin{align*}
 h(x^*) + \tfrac{1}{2\beta} \| x^* - z \|^{2} & \leq \gamma\lambda  h(\overline{x}) + (1 - \gamma\lambda)  h(x^*) + \tfrac{\gamma\rho}{2}  \lambda\left(1 - \tfrac{\lambda}{2-\gamma}\right) \| x^* - \overline{x} \|^{2} + \tfrac{1}{2 \beta} \| \lambda \overline{x} + (1-\lambda)x^* - z \|^{2} \\
 & = \gamma\lambda  h(\overline{x}) + (1 - \gamma\lambda)  h(x^*) + \tfrac{\gamma\rho}{2}  \lambda\left(1 - \tfrac{\lambda}{2-\gamma}\right) \| x^* - \overline{x} \|^{2} \\
&\quad + \tfrac{\lambda}{2\beta} \| \overline{x} - z \|^{2} 
   + \tfrac{1-\lambda}{2\beta} \| x^* - z \|^{2} 
   - \tfrac{\lambda(1-\lambda)}{2\beta} \| \overline{x} - x^* \|^{2}.
\end{align*}}
That is,
\begin{align*}
\gamma\lambda \bigl( h(x^*) - h(\overline{x}) \bigr) &\leq \tfrac{\lambda}{2\beta} \bigl( \| \overline{x} - z \|^{2} - \| x^* - z \|^{2} \bigr)  + \tfrac{\gamma\rho}{2}  \lambda\left(1 - \tfrac{\lambda}{2-\gamma}\right) \| x^* - \overline{x} \|^{2} - \tfrac{\lambda(1-\lambda)}{2\beta} \| \overline{x} - x^* \|^{2}.
\end{align*}
Dividing by $\lambda > 0$ and using identity \eqref{id:3points}, we obtain
\begin{align*}
 \gamma \bigl( h(x^*) - h(\overline{x}) \bigr) 
 & \leq \tfrac{1}{\beta} \langle x^* - z, \overline{x} - x^* \rangle + \tfrac{1}{2\beta} \| \overline{x} - x^* \|^{2} + \tfrac{\gamma\rho}{2} \left(1 - \tfrac{\lambda}{2-\gamma}\right) \| x^* - \overline{x} \|^{2} - \tfrac{1-\lambda}{2\beta} \| \overline{x} - x^* \|^{2} \\
 & = \tfrac{1}{\beta} \langle x^* - z, \overline{x} - x^* \rangle + \tfrac{\lambda}{2\beta} \| x^* - \overline{x} \|^{2} + \tfrac{\gamma\rho}{2} \left(1 - \tfrac{\lambda}{2-\gamma}\right) \| x^* - \overline{x} \|^{2},
\end{align*}
and the result follows by taking $\lambda \downarrow 0$.
%\begin{equation*}
%h(x^*) - h(\overline{x}) \leq \tfrac{1}{\gamma\beta} \langle x^* - z, \overline{x} - x^* \rangle + \tfrac{\rho}{2} \| x^* - \overline{x} \|^{2},
%\end{equation*}
%which is precisely \eqref{eq:quasar-prox-char}.
\end{proof}
%------------end proof--------------%

The following sufficient decrease property holds:
%________________________%
%       proposition      %
%________________________%
\begin{proposition} \label{prop:quasar-prox-descent}
 Let $\mathcal{K} \subseteq \mathbb{R}^{n}$ be a closed convex set and $h: \R^n \to \overline{\R}$ be a proper, lower semicontinuous, and $(\rho, \gamma)$-weakly quasar-convex with respect to $\overline{x} \in \argmin_{\mathcal{K}} h$ ($\gamma \in   ]0, 1]$ and $\rho \geq 0$) on $\mathcal{K} \subseteq \dom(h)$. Then for every $z \in \mathcal{K}$, $\beta > 0$ and $x^* \in \prox_{\beta h} (\mathcal{K}, z)$, we have
\begin{equation}\label{eq:quasar-prox-descent}
 h(x^*) + \tfrac{1}{2\beta} \| x^* - z \|^{2} \leq h(z).
\end{equation}
Furthermore, if $0 < \gamma\beta\rho < 1$, then
\begin{equation*}
\| x^* - \overline{x} \| \leq \frac{1}{\sqrt{1 - \gamma\beta\rho}}  \| z - \overline{x} \|.
\end{equation*}
\end{proposition}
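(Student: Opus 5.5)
The descent inequality \eqref{eq:quasar-prox-descent} needs no structure beyond the definition of the proximity operator. I will test the minimality inequality \eqref{eq:prox-ineq} with the admissible point $x = z \in \mathcal{K}$. This gives $h(x^*) + \tfrac{1}{2\beta}\|x^*-z\|^2 \leq h(z) + 0$ immediately. This part is routine.

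For the distance bound I will start from Proposition~\ref{prop:quasar-prox-char}, which gives
\begin{equation*}
 0 \leq h(x^*) - h(\overline{x}) \leq \tfrac{1}{\gamma\beta}\langle x^*-z, \overline{x}-x^*\rangle + \tfrac{\rho}{2}\|x^*-\overline{x}\|^2 .
\end{equation*}
The left inequality holds because $x^* \in \mathcal{K}$ and $\overline{x} \in \argmin_{\mathcal{K}} h$. Multiplying by $\gamma\beta>0$ yields $0 \leq \langle x^*-z, \overline{x}-x^*\rangle + \tfrac{\gamma\beta\rho}{2}\|x^*-\overline{x}\|^2$.

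Next I will expand the inner product with the three-point identity \eqref{id:3points}, taking $x = x^*$, $y = \overline{x}$ and the third point $z$:
\begin{equation*}
 \langle x^*-z, \overline{x}-x^*\rangle = \tfrac12\|z-\overline{x}\|^2 - \tfrac12\|x^*-z\|^2 - \tfrac12\|\overline{x}-x^*\|^2 .
\end{equation*}
Substituting, dropping the nonpositive term $-\tfrac12\|x^*-z\|^2$, and multiplying by $2$ gives
\begin{equation*}
 (1-\gamma\beta\rho)\|x^*-\overline{x}\|^2 \leq \|z-\overline{x}\|^2 .
\end{equation*}
Under the hypothesis $0<\gamma\beta\rho<1$ the coefficient on the left is positive. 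Dividing by it and taking square roots gives the claim.

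The only point needing care is the sign bookkeeping in the three-point identity: one has to match its variables correctly so that the coefficient of $\|x^*-\overline{x}\|^2$ comes out exactly as $\tfrac12 - \tfrac{\gamma\beta\rho}{2}$. Beyond that, I expect no real obstacle. The case $\rho=0$ would give the bound with constant $1$ as well, but it is excluded by the stated hypothesis and is not needed.
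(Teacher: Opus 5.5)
Your proposal is correct and follows essentially the same route as the paper. Both prove the descent inequality by testing the prox-minimality condition \eqref{eq:prox-ineq} at $x=z$. Both obtain the distance bound by combining Proposition~\ref{prop:quasar-prox-char} with $h(x^*)\ge h(\overline{x})$, expanding with the three-point identity \eqref{id:3points}, and dropping $\|x^*-z\|^2$. Your step of multiplying through by $\gamma\beta$ first is only a rescaling of the paper's inequality $\bigl(\tfrac{1}{\gamma\beta}-\rho\bigr)\|x^*-\overline{x}\|^2\le\tfrac{1}{\gamma\beta}\|z-\overline{x}\|^2$.
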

%------------------%
\begin{proof}
The inequality \eqref{eq:quasar-prox-descent} holds for any proper functions (does not need convexity assumptions), and follows directly by taking $x = z$ in \eqref{eq:prox-ineq}.

For the second part, from relation \eqref{eq:quasar-prox-char} and since $h(x^*) \geq h(\overline{x})$, we have
%\begin{equation}\label{eq:qpc}
$$0 \leq \tfrac{1}{\gamma\beta} \langle x^* - z, \overline{x} - x^* \rangle + \tfrac{\rho}{2} \| x^* - \overline{x} \|^{2}.$$
%\end{equation}
Using identity \eqref{id:3points} we obtain 
\begin{align*}
 & 0 \leq \tfrac{1}{2\gamma\beta} \left( \| \overline{x} - z \|^{2} - \| x^* - z \|^{2} - \| \overline{x} - x^* \|^{2} \right) + \tfrac{\rho}{2} \| x^* - \overline{x} \|^{2}, \\ 
  \Longrightarrow & \left( \tfrac{1}{\gamma\beta} - \rho \right) \| x^* - \overline{x} \|^{2} \leq \tfrac{1}{\gamma\beta} \left( \| \overline{x} - z \|^{2} - \| x^* - z \|^{2} \right).
\end{align*}
Since $\| x^* - z \|^{2} \geq 0$, the right-hand side is bounded above by $\tfrac{1}{\gamma\beta} \| \overline{x} - z \|^{2}$. Under the assumption $0 < \gamma\beta\rho < 1$ (so that $\tfrac{1}{\gamma\beta} - \rho > 0$), we obtain
\begin{equation*}
\| x^* - \overline{x} \|^{2} \leq \frac{\tfrac{1}{\gamma\beta}}{\tfrac{1}{\gamma\beta} - \rho}  \| \overline{x} - z \|^{2} = \tfrac{1}{1 - \gamma\beta\rho}  \| z - \overline{x} \|^{2},
\end{equation*}
which completes the proof.
\end{proof}
%------------end proof--------------%

%%==========================%%
%%       Algorithm          %%
%%==========================%%
\section{The Proximal Point Method and Convergence Analysis} \label{sec:PPM}

In this part, we consider the following general optimization problem
\begin{equation}\tag{P}\label{P}
 \min\limits_{x \in \mathbb{R}^n} h(x),
\end{equation}
where $h$ is a weakly quasar-convex function. 
Our purpose is to solve problem \eqref{P} via the proximal point method in its classical form (see \cite{rock-1976}).

%

%________________________%
%       Algorithm       %
%________________________%
\begin{algorithm}[hbpt!]
\caption{Proximal Point Method (PPM)}\label{Alg: PPM}

\textbf{Input:}  Initial point $x^0 \in \R^n$, $\beta'>0$, and a sequence of step sizes $\{\beta_k\}_k$ such that
\begin{equation}\label{param:cond-2}
  \beta_k\ \ge\ \beta' > 0, ~ \forall k \in \mathbb{N}_{0}.
\end{equation} 
%Initial point $x^0 \in \mathbb{R}^n$, $\rho \geq 0$, $\beta^{\prime} > 0$, and a sequence of step sizes $\{\beta_k\}_{k}$ such that 
%\begin{equation}\label{param:cond-2}
% 0 < \beta^{\prime} \leq \beta_{k} <  \frac{1}{\rho}, ~ \forall k \in \mathbb{N}_{0}.
%\end{equation} 

\textbf{Iterate:} For $k = 0, 1, 2, \cdots$
\begin{equation}\label{eq:ppm-iteration}
 x^{k+1} \leftarrow \text{Prox}_{\beta_k h}(x^k) = \argmin_{z \in \mathbb{R}^n} \left\{h(z) + \tfrac{1}{2 \beta_k}\|z - x^k\|^2\right\}.
\end{equation}
\textbf{Until:} Terminate when the stopping criteria are satisfied. \\
\textbf{Return:} $x^{k+1}$. 
\end{algorithm}
%------------------%

Corollary \ref{prox:nonempty} establishes the existence of solutions to all proximal subproblems, rendering the iterative scheme well-defined. 

%________________________%
%      lemma             %
%________________________%
\begin{lemma}[Descent Property]\label{prop:ppm-descent}
 Let $h: \mathbb{R}^n \to \overline{\mathbb{R}}$ be proper, lower semicontinuous, and $(\rho,\gamma)$-weakly quasar-convex with respect to $\overline{x} \in \argmin_{ \mathbb{R}^n} h$, with $\rho \geq 0$ and $\gamma \in   ]0, 1]$. Then for every $\beta_k > 0$ and $x^{k+1} \in {\rm Prox}_{\beta_k h} (x^k)$, we have
\begin{equation}\label{eq:descent}
 h(x^{k+1}) + \tfrac{1}{2\beta_k}\|x^{k+1} - x^k\|^2 \leq h(x^k).
\end{equation}
In particular, $\{h(x^k)\}_{k}$ is a non-increasing sequence.
\end{lemma}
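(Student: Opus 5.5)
The descent inequality \eqref{eq:descent} does not depend on weak quasar-convexity at all. It only uses that $x^{k+1}$ minimizes the proximal objective. The plan is to compare the value of that objective at the minimizer with its value at the feasible point $z = x^k$.

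First we make sure the iteration is well defined. By Corollary~\ref{prox:nonempty}, applied with $\mathcal{K}=\R^n$, the set ${\rm Prox}_{\beta_k h}(x^k)$ is nonempty for every $\beta_k>0$. This uses only that $h$ is proper and lower semicontinuous and that $\overline{x}\in\argmin_{\R^n} h$. So any choice of $x^{k+1}$ in this set is legitimate.

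Next we derive the inequality. Since $x^{k+1}$ minimizes $z\mapsto h(z)+\tfrac{1}{2\beta_k}\|z-x^k\|^2$, its value there is at most the value at $z=x^k$, which equals $h(x^k)+0$. This gives exactly \eqref{eq:descent}. Equivalently, we can invoke Proposition~\ref{prop:quasar-prox-descent} with $\mathcal{K}=\R^n$, $z=x^k$ and $\beta=\beta_k$.

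The only point needing care is when $h(x^k)=+\infty$. Then the inequality is trivially true, but monotonicity would be vacuous. For $k\ge 1$ we have $h(x^{k})<+\infty$: the minimum of the proximal objective is at most its value at any point of $\dom(h)\neq\emptyset$, and that value is finite. If $x^0\notin\dom(h)$, the first step still produces an iterate with finite value. Note also that $h(x^{k+1})\geq h(\overline{x})>-\infty$ because $h$ is proper, so the subtraction below is legitimate.

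Finally, monotonicity follows by discarding the nonnegative term $\tfrac{1}{2\beta_k}\|x^{k+1}-x^k\|^2$, which gives $h(x^{k+1})\le h(x^k)$ for all $k$. There is no real obstacle; the only subtlety is the extended-valued bookkeeping just described.
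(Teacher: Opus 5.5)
Your proposal is correct and is essentially the paper's proof: the paper obtains \eqref{eq:descent} directly from \eqref{eq:prox-ineq} with $z=x^k$ and $x^*=x^{k+1}$, i.e., by comparing the proximal objective at the minimizer with its value at $x^k$, with weak quasar-convexity playing no role. Your extra remarks on well-definedness and on the case $h(x^k)=+\infty$ are sound but go beyond what the paper records.
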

%------------------%
\begin{proof}
Directly from \eqref{eq:prox-ineq} with $z=x^{k}$ and $x^{*} = x^{k+1}$.
\end{proof}
%------------------%]

The following useful relation holds.

%________________________%
%       proposition      %
%________________________%
\begin{proposition}\label{Prop: 1}
Let $\mathcal{K} \subset \mathbb{R}^n$ be a closed convex set and $h: \mathbb{R}^n \rightarrow \overline{ \mathbb{R}}$ be a proper, lower semicontinuous and $(\rho, \gamma)$-weakly quasar-convex function with respect to $\overline{x} \in \argmin_{\mathcal{K}} h$ ($\gamma \in   ]0,1]$ and $\rho \geq 0$) on $\mathcal{K} \subset \dom(h)$, $x^k \in \mathbb{R}^n$ and $\beta_{k} > 0$ for all $k \in \mathbb{N}_{0}$. If $x^{k+1} \in {\rm Prox}_{\beta_{k} h}(\mathcal{K}, x^k)$, then
\begin{equation}\label{eq:prop1}
 h\left(x^{k+1}\right) - h(\overline{x}) \leq \tfrac{\rho}{2} \left\|x^{k+1} - \overline{x} \right\|^2 + \tfrac{1}{\gamma \beta_{k}} \langle x^{k+1} - x^k, \overline{x} -x^{k+1} \rangle.
\end{equation}
\end{proposition}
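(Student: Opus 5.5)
This is Proposition~\ref{prop:quasar-prox-char} with $z = x^k$, $x^* = x^{k+1}$ and $\beta = \beta_k$. Inequality \eqref{eq:quasar-prox-char} is stated with $\langle x^* - z, \overline{x} - x^*\rangle$, which is exactly the inner product appearing in \eqref{eq:prop1}. The plan is therefore to reduce to that proposition. If $x^k \notin \mathcal{K}$, where that proposition's hypothesis $z \in \mathcal{K}$ is not available, the same argument is repeated directly.

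\textbf{Step 1: optimality of the prox step.} Since $x^{k+1} \in {\rm Prox}_{\beta_k h}(\mathcal{K},x^k)$, we have, for every $x \in \mathcal{K}$,
\[
h(x^{k+1}) + \tfrac{1}{2\beta_k}\|x^{k+1}-x^k\|^2 \leq h(x) + \tfrac{1}{2\beta_k}\|x - x^k\|^2 .
\]
This is \eqref{eq:prox-ineq}, and it does not use $x^k \in \mathcal{K}$.

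\textbf{Step 2: test point on the segment to the minimizer.} Fix $\lambda \in ]0,1]$ and choose $x = \lambda\overline{x} + (1-\lambda)x^{k+1}$. This point lies in $\mathcal{K}$ by convexity, since both $\overline{x}$ and $x^{k+1}$ lie in $\mathcal{K}$.

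\textbf{Step 3: apply the two inequalities.} Bound $h(x)$ using \eqref{eq:quasar-wconvex}, and expand $\|x - x^k\|^2$ using \eqref{iden:1}. Cancel the common terms, divide by $\lambda$, and rewrite $\|\overline{x}-x^k\|^2 - \|x^{k+1}-x^k\|^2$ through \eqref{id:3points}. This gives
\[
\gamma\bigl(h(x^{k+1}) - h(\overline{x})\bigr) \leq \tfrac{1}{\beta_k}\langle x^{k+1}-x^k, \overline{x}-x^{k+1}\rangle + \tfrac{\lambda}{2\beta_k}\|x^{k+1}-\overline{x}\|^2 + \tfrac{\gamma\rho}{2}\Bigl(1-\tfrac{\lambda}{2-\gamma}\Bigr)\|x^{k+1}-\overline{x}\|^2 .
\]

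\textbf{Step 4: limit and conclusion.} Let $\lambda \downarrow 0$ and divide by $\gamma > 0$; this yields \eqref{eq:prop1}.

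The only delicate point is Step 2. It needs $x^{k+1} \in \mathcal{K} \subset \dom(h)$, so that weak quasar-convexity applies on $\mathcal{K}$ and the value $h(x^{k+1})$ is finite and can be cancelled from both sides. Both facts hold because the prox is taken over $\mathcal{K}$ and $h$ is proper.
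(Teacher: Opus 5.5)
Your proof is correct and matches the paper, which proves this by invoking Proposition~\ref{prop:quasar-prox-char} with $z=x^k$ and $x^*=x^{k+1}$. Your direct re-derivation (test point $\lambda\overline{x}+(1-\lambda)x^{k+1}$, expansion via \eqref{iden:1} and \eqref{id:3points}, then $\lambda\downarrow 0$) is exactly that proposition's proof, and your remark that its hypothesis $z\in\mathcal{K}$ is never used is a useful clarification. It shows the citation stays valid when $x^k\notin\mathcal{K}$, a point the paper glosses over.
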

%--------
\begin{proof}
Directly from Proposition \ref{prop:quasar-prox-char} with $z=x^{k}$ and $x^{*} = x^{k+1}$.
\end{proof}
%------------------%

In the next result, we establish the convergence of the sequence generated by Algorithm~\ref{Alg: PPM} to a global solution of the minimization problem.

%________________________%
%     theorem            %
%________________________%
\begin{theorem}[Convergence of PPM under Local Quadratic Growth] \label{thm:ppm-linear}
 Let $h:\mathbb{R}^n\to\overline{\mathbb{R}}$ be proper, lower semicontinuous, $(\rho,\gamma)$-weakly quasar-convex with respect to $\overline{x} \in \argmin_{ \mathbb{R}^n}h$, with $\rho \geq 0$ and $\gamma \in   ]0, 1]$, and that satisfies the $\mu$-local quadratic growth condition \eqref{eq:QG}. Assume that $\mu>\rho$. Let $\{x^k\}_{k}$ be generated by Algorithm \ref{Alg: PPM} with step sizes $\{\beta_{k}\}_{k}$ satisfying \eqref{param:cond-2}. If $x^0 \in \mathcal{L}_{\zeta}^h (\mathbb{R}^n, \overline{x})$ for some $\zeta>0$, then setting
\begin{equation}\label{eq:tau}
 \tau := \frac{1}{1+ (\mu-\rho)\gamma \beta^{\prime}} \in   ]0, 1[,
\end{equation}
we have for every $k \in \mathbb{N}_{0}$ that:
\begin{itemize}
 \item[$(a)$] $x^k\in\mathcal{L}_{\zeta}^h(\mathbb{R}^n,\overline{x})$.
    
 \item[$(b)$] $\|x^{k+1}-\overline{x}\|^2 \leq \tau   \|x^k-\overline{x}\|^2 \leq \tau^{ k+1}\|x^0-\overline{x}\|^2$.

 \item[$(c)$] $h(x^k)\to h(\overline{x})$.
\end{itemize}
\end{theorem}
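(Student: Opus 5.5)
We argue by induction on $k$, proving (a) and the one-step contraction in (b) together, and then deduce (c) from QG.

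\emph{Step 1: sublevel invariance.} Part (a) follows from Lemma~\ref{prop:ppm-descent}. The descent inequality \eqref{eq:descent} gives $h(x^{k+1})\le h(x^k)$. Hence if $x^k\in\mathcal{L}_{\zeta}^h(\mathbb{R}^n,\overline{x})$, i.e.\ $h(x^k)\le h(\overline{x})+\zeta$, the same holds for $x^{k+1}$. Since $x^0$ lies in the sublevel set by assumption, induction yields (a) for all $k$. In particular, the QG condition \eqref{eq:QG} applies at every iterate $x^{k+1}$.

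\emph{Step 2: one-step contraction.} We apply Proposition~\ref{Prop: 1} with $\mathcal{K}=\mathbb{R}^n$ and obtain
\[
h(x^{k+1})-h(\overline{x})\le \tfrac{\rho}{2}\|x^{k+1}-\overline{x}\|^2+\tfrac{1}{\gamma\beta_k}\langle x^{k+1}-x^k,\overline{x}-x^{k+1}\rangle .
\]
By Step 1 we may bound the left side from below by $\tfrac{\mu}{2}\|x^{k+1}-\overline{x}\|^2$. For the inner product we use \eqref{id:3points} with $x=x^{k+1}$, $z=x^k$, $y=\overline{x}$:
\[
\langle x^{k+1}-x^k,\overline{x}-x^{k+1}\rangle=\tfrac12\|x^k-\overline{x}\|^2-\tfrac12\|x^{k+1}-x^k\|^2-\tfrac12\|x^{k+1}-\overline{x}\|^2 .
\]
We discard the nonpositive term $-\tfrac12\|x^{k+1}-x^k\|^2$ and multiply through by $2\gamma\beta_k$. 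This gives
\[
\bigl(1+(\mu-\rho)\gamma\beta_k\bigr)\|x^{k+1}-\overline{x}\|^2\le \|x^k-\overline{x}\|^2 .
\]
Since $\mu>\rho$ and $\beta_k\ge\beta'$, the factor $1+(\mu-\rho)\gamma\beta_k$ is at least $1/\tau$, where $\tau\in\,]0,1[$ is defined in \eqref{eq:tau}. This gives the first inequality in (b), and iterating it gives $\|x^{k+1}-\overline{x}\|^2\le\tau^{k+1}\|x^0-\overline{x}\|^2$.

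\emph{Step 3: function values.} For (c), note that $\{h(x^k)\}_k$ is nonincreasing by Lemma~\ref{prop:ppm-descent} and bounded below by $h(\overline{x})$, so it converges. To identify the limit, we return to the inequality of Proposition~\ref{Prop: 1} and bound its right-hand side directly. By Cauchy--Schwarz, the inner product term is at most $\tfrac{1}{\gamma\beta'}\|x^{k+1}-x^k\|\,\|x^{k+1}-\overline{x}\|$. Both norms tend to $0$ by (b), since $\|x^{k+1}-x^k\|\le\|x^{k+1}-\overline{x}\|+\|x^k-\overline{x}\|$. The first term $\tfrac{\rho}{2}\|x^{k+1}-\overline{x}\|^2$ also tends to $0$. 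Hence $0\le h(x^{k+1})-h(\overline{x})\to0$, which is (c); the bound is in fact $O(\tau^k)$, giving R-linear convergence of the values.

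\emph{Main obstacle.} The delicate point is Step 2: the QG condition must be applied at $x^{k+1}$, not at $x^k$. This is legitimate only because of the descent property established in Step 1, which keeps every iterate inside $\mathcal{L}_{\zeta}^h$. One should also check that no upper bound on $\beta_k$ is needed. In Step 2 the factor $\mu-\rho$ is positive regardless of the size of $\beta_k$, so large $\beta_k$ only improve the contraction, consistent with \eqref{param:cond-2}.
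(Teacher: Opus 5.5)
Your proof is correct and follows the paper's argument step for step: sublevel-set invariance from the descent lemma, then Proposition~\ref{Prop: 1} combined with QG at $x^{k+1}$ and the three-point identity to get $\bigl(1+(\mu-\rho)\gamma\beta_k\bigr)\|x^{k+1}-\overline{x}\|^2\le\|x^k-\overline{x}\|^2$, and finally Cauchy--Schwarz for (c). Your side remark that the value gap is actually $O(\tau^k)$ is also right, and it is slightly sharper than the $(\sqrt{\tau})^{k-1}$ rate the paper records in Corollary~\ref{cor:ppm-rates}.
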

%------------------%
\begin{proof}
 $(a)$: It follows by Lemma~\ref{prop:ppm-descent} that
\begin{equation}\label{eq:descent-use}
 h(x^{k+1}) \leq h(x^{k+1}) + \tfrac{1}{2\beta_k} \|x^{k+1}-x^k\|^2 \leq h(x^k).
\end{equation}
Hence $\{h(x^k)\}$ is non-increasing, and by induction $h(x^k)\leq h(x^0) \leq h(\overline{x})+\zeta$, so $x^k\in\mathcal{L}_{\zeta}^h(\mathbb{R}^n, \overline{x})$ for every $k \in \mathbb{N}_{0}$.

\noindent
$(b)$: Applying Proposition~\ref{Prop: 1} with $\mathcal{K}=\mathbb{R}^n$ and rearranging yields
\begin{equation}\label{eq:key}
 2 \langle x^{k+1}-x^k,  x^{k+1}-\overline{x} \rangle \leq \rho\gamma\beta_k \|x^{k+1}-\overline{x} \|^2 + 2 \gamma\beta_k (h(\overline{x}) - h(x^{k+1})).
\end{equation}
Since $x^{k+1}\in\mathcal{L}_{\zeta}$ by part $(a)$, condition~\eqref{eq:QG} gives
\begin{equation}\label{eq:QG-applied}
 h(x^{k+1})-h(\overline{x}) \geq \tfrac{\mu}{2} \|x^{k+1}-\overline{x}\|^2,
\end{equation}
so that $h(\overline{x}) - h(x^{k+1}) \leq - \frac{\mu}{2} \|x^{k+1} - \overline{x}\|^2$. Substituting into~\eqref{eq:key},
\begin{equation}\label{eq:key2}
 2 \langle x^{k+1}-x^k,  x^{k+1} - \overline{x} \rangle \leq - (\mu-\rho) \gamma \beta_k \|x^{k+1}-\overline{x}\|^2.
\end{equation}
Using \eqref{id:3points}, we have
\begin{align}
 & \|x^{k+1}-\overline{x}\|^2 - \|x^k-\overline{x}\|^2 + \|x^{k+1}-x^k\|^2 \leq - (\mu-\rho) \gamma \beta_k \|x^{k+1}-\overline{x}\|^2, \notag \\
 & \hspace{1.05cm} \Longrightarrow \bigl( 1 + (\mu-\rho) \gamma \beta_k \bigr) \|x^{k+1} - \overline{x}\|^2 \leq \|x^k - \overline{x}\|^2. \label{eq:contraction}
\end{align}
Since $\mu>\rho$, $\gamma>0$, and $\beta_k \geq \beta^{\prime}>0$, we have $1+(\mu-\rho) \gamma \beta_k \geq 1+(\mu-\rho) \gamma \beta^{\prime} = \tfrac{1}{\tau} >1$, where $\tau \in ]0, 1[$ is defined in~\eqref{eq:tau}. Dividing \eqref{eq:contraction} by $1+(\mu-\rho)\gamma\beta_k$ yields
\begin{equation*}
 \|x^{k+1}-\overline{x}\|^2 \leq \tfrac{1}{1+(\mu-\rho) \gamma \beta_k} \|x^k - \overline{x}\|^2 \leq \tau  \|x^k - \overline{x}\|^2.
\end{equation*}
Applying this bound recursively, we obtain $(b)$.

\noindent
$(c)$: Since $\overline{x}$ is a global minimizer of $h$, $h(x^{k+1}) \geq h(\overline{x})$ for all $k$. Furthermore, by applying Proposition~\ref{Prop: 1} with $\mathcal{K}=\mathbb{R}^n$, we get
\begin{equation}\label{eq:upper-raw}
 0 \leq h(x^{k+1}) - h(\overline{x}) \leq \tfrac{\rho}{2}\|x^{k+1}-\overline{x}\|^2 + \tfrac{1}{\gamma\beta_k} \langle x^{k+1} - x^k, \overline{x} - x^{k+1}\rangle.
\end{equation}
Applying the Cauchy-Schwarz inequality to the inner product and using $\beta_k\geq\beta^{\prime}>0$, we get
\begin{equation}\label{eq:upper}
 h(x^{k+1}) - h(\overline{x}) \leq \tfrac{\rho}{2} \|x^{k+1}-\overline{x}\|^2 + \tfrac{1}{\gamma \beta^{\prime}} \|x^{k+1}-x^k\| \cdot \|x^{k+1}-\overline{x}\|.
\end{equation}
From part $(b)$, we know that $\|x^{k}-\overline{x}\|\leq\tau^{k/2}\|x^0-\overline{x}\|\to 0$. By the triangle inequality,
\begin{equation*}
 \|x^{k+1}-x^k\| \leq \|x^{k+1}-\overline{x}\| + \|x^k-\overline{x}\| \leq \bigl( \tau^{(k+1)/2} + \tau^{k/2}\bigr) \|x^0-\overline{x}\| \to 0.
\end{equation*}
Hence both $\|x^{k+1}-\overline{x}\|\to 0$ and $\|x^{k+1}-x^k\|\to 0$, so the
right-hand side of~\eqref{eq:upper} tends to $0$. Combining with \eqref{eq:upper-raw} yields $h(x^k)\to h(\overline{x})$, which completes the proof.
\end{proof}
%------------end proof--------------%

In particular, we have the following convergence rates.
%________________________%
%       Corollary        %
%________________________%

\begin{corollary}[Convergence Rates] \label{cor:ppm-rates}
Under the same assumptions as Theorem~\ref{thm:ppm-linear}, the following hold:
\begin{itemize}
 \item[$(a)$] The sequence of iterates $\{x^k\}_{k}$ converges $Q$-linearly to $\overline{x}$ with Q-factor $\sqrt{\tau}$, i.e.,
 \begin{equation}
  \|x^{k+1} - \overline{x}\| \leq \frac{1}{\sqrt{1+ (\mu-\rho)\gamma \beta^{\prime}}} \|x^k - \overline{x}\|, ~ \forall  k \in \mathbb{N}_0.
 \end{equation}
 \item[$(b)$] The sequence of function values $\{h(x^k)\}_{k}$ converges R-linearly to $h(\overline{x})$ with R-factor $\sqrt{\tau}$, i.e.,
 \begin{equation}
  h(x^k) - h(\overline{x}) \leq C  (\sqrt{\tau})^{k-1}, ~ \forall k \in \mathbb{N},
 \end{equation}
 where
 \begin{equation}\label{eq:C-def}
  C := \left(\tfrac{\rho}{2} + \tfrac{1}{\gamma\beta'}\big(1+\sqrt{\tau}\big)\right)\|x^0-\overline{x}\|^2.
 \end{equation}
\end{itemize}
\end{corollary}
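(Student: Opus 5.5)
Part $(a)$ is immediate from Theorem~\ref{thm:ppm-linear}$(b)$. The first inequality there, $\|x^{k+1}-\overline{x}\|^2 \leq \tau\|x^k-\overline{x}\|^2$, holds for every $k\in\mathbb{N}_0$. Taking square roots gives $\|x^{k+1}-\overline{x}\| \leq \sqrt{\tau}\,\|x^k-\overline{x}\|$, and $\sqrt{\tau} = (1+(\mu-\rho)\gamma\beta')^{-1/2}\in\,]0,1[$. This is exactly the Q-linear estimate \eqref{eq:q-linear} with factor $r=\sqrt{\tau}$. Iterating also yields the R-linear bound $\|x^k-\overline{x}\|\leq (\sqrt{\tau})^k\|x^0-\overline{x}\|$, which I will use in part $(b)$.

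For part $(b)$ I would start from inequality \eqref{eq:upper} in the proof of Theorem~\ref{thm:ppm-linear}$(c)$, written with index $k-1$ in place of $k$ for $k\in\mathbb{N}$:
\[
h(x^{k}) - h(\overline{x}) \leq \tfrac{\rho}{2}\|x^{k}-\overline{x}\|^2 + \tfrac{1}{\gamma\beta'}\|x^{k}-x^{k-1}\|\,\|x^{k}-\overline{x}\|.
\]
Next I bound each factor by powers of $\sqrt{\tau}$ using part $(a)$. First, $\|x^k-\overline{x}\|\leq (\sqrt{\tau})^{k}\|x^0-\overline{x}\|\leq (\sqrt{\tau})^{k-1}\|x^0-\overline{x}\|$. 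Second, by the triangle inequality, $\|x^k-x^{k-1}\|\leq \|x^k-\overline{x}\|+\|x^{k-1}-\overline{x}\| \leq (1+\sqrt{\tau})(\sqrt{\tau})^{k-1}\|x^0-\overline{x}\|$.

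Substituting these bounds, the first term is at most $\tfrac{\rho}{2}\tau^{k}\|x^0-\overline{x}\|^2$. The second is at most $\tfrac{1}{\gamma\beta'}(1+\sqrt{\tau})\,\tau^{k-1/2}\|x^0-\overline{x}\|^2$. Both quadratic-in-rate factors, $\tau^{k}$ and $\tau^{k-1/2}$, are bounded by $(\sqrt{\tau})^{k-1}$, because $\tau\le1$ and $2k\geq k-1$, $2k-1\geq k-1$. Summing gives $h(x^k)-h(\overline{x})\leq C(\sqrt{\tau})^{k-1}$ with $C$ as in \eqref{eq:C-def}.

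There is no real obstacle. The only care needed is in the exponent bookkeeping: shifting the index in \eqref{eq:upper} (valid for $k\geq1$, hence the restriction $k\in\mathbb{N}$), and checking that the crude bounds $\tau^{k}\le(\sqrt\tau)^{k-1}$ and $\tau^{k-1/2}\le(\sqrt\tau)^{k-1}$ produce precisely the stated constant. The estimate is in fact loose, since the true rate is $\tau$, but the statement only asks for R-factor $\sqrt{\tau}$.
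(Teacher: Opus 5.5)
Your proposal is correct and matches the paper's proof: part $(a)$ is obtained by taking square roots in Theorem~\ref{thm:ppm-linear}$(b)$. Part $(b)$ combines \eqref{eq:upper} (Proposition~\ref{prop:quasar-prox-char} plus Cauchy--Schwarz and $\beta_k\ge\beta'$) with the triangle-inequality bound on consecutive iterates and the crude exponent comparison $\tau^{k}, \tau^{k-1/2}\le(\sqrt{\tau})^{k-1}$; the only cosmetic difference is that the paper bounds $h(x^{k+1})-h(\overline{x})\le C\tau^{k/2}$ and reindexes at the end, whereas you shift the index first.
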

%------------------
\begin{proof}
$(a)$: This follows directly from Theorem \ref{thm:ppm-linear}$(b)$. The contraction factor $\tau$ comes from the inequality
\begin{equation*}
 \|x^{k+1} - \overline{x}\|^2 \leq \tfrac{1}{1 + (\mu - \rho) \gamma\beta_k}\|x^k - \overline{x}\|^2 \leq \tau \|x^k - \overline{x}\|^2,
\end{equation*}
where the last inequality uses $\beta_k \ge \beta'$ from \eqref{param:cond-2}, and the result follows by applying this recursively.

$(b)$: From Proposition~\ref{prop:quasar-prox-char} with $\mathcal{K}=\mathbb{R}^n$, we have
\begin{equation*} %\label{eq:upper-raw-cor}
 h(x^{k+1}) - h(\overline{x}) \leq \tfrac{\rho}{2} \|x^{k+1} - \overline{x}\|^2 + \tfrac{1}{\gamma \beta_k} \langle x^{k+1} - x^k, \overline{x} - x^{k+1}\rangle.
\end{equation*}
Applying the Cauchy-Schwarz inequality and using $\beta_k \ge \beta' > 0$, we get
\begin{equation*}
 h(x^{k+1}) - h(\overline{x}) \leq \tfrac{\rho}{2} \|x^{k+1}-\overline{x}\|^2 + \tfrac{1}{\gamma \beta'} \|x^{k+1}-x^k\| \cdot \|x^{k+1}-\overline{x}\|.
\end{equation*}
Using part $(a)$ and the triangle inequality,
\begin{equation*}
 \|x^{k+1} - x^k\| \leq \|x^{k+1} - \overline{x}\| + \|x^k - \overline{x}\| \leq \left(\tau^{(k+1)/2} + \tau^{k/2} \right)\|x^0 - \overline{x}\|.
\end{equation*}
Substituting and simplifying,
\begin{align*}
 h(x^{k+1}) - h(\overline{x}) & \leq \left( \tfrac{ \rho}{2} \tau^{k+1} + \tfrac{1}{\gamma \beta'} ( \tau^{(k+1)/2} + \tau^{k/2}) \tau^{ (k+1)/2} \right) \|x^0 - \overline{x}\|^2 \\
 & \leq \left( \tfrac{\rho}{2} + \tfrac{1}{\gamma \beta'} (1 + \sqrt{\tau}) \right) \tau^{k/2} \|x^0 - \overline{x}\|^2 \\
 & = C  \tau^{k/2},
\end{align*}
with $C$ as in \eqref{eq:C-def}. Since this bounds $h(x^{k+1})-h(\overline x)$ for every $k\in \mathbb{N}_0$, reindexing $n:=k+1$ gives
\begin{equation*}
 h(x^n) - h(\overline{x}) \leq C  \tau^{(n-1)/2} = C (\sqrt\tau)^{n-1}, ~ \forall  n \geq 1,
\end{equation*}
which is the claimed bound.
\end{proof}

%------------end proof--------------%
We finish this subsection with the following complexity analysis. The standard proof is omitted.
%________________________%
%       Corollary        %
%________________________%
\begin{corollary}[Iteration Complexity] \label{cor:ppm-complexity}
Under the same assumptions as Corollary~\ref{cor:ppm-rates}, the following assertions hold:
\begin{itemize}
\item[$(a)$] For any $\varepsilon > 0$, after at most
\begin{equation}\label{eq:complexity-iter}
 K_{\mathrm{iter}}(\varepsilon) = \left\lceil \tfrac{2}{- \ln (\tau)} \ln\left(\tfrac{\|x^0 - \overline{x}\|}{\varepsilon} \right) \right\rceil
\end{equation}
iterations, we have $\|x^K - \overline{x}\| \leq \varepsilon$.

\item[$(b)$] For any $\varepsilon > 0$, after at most
\begin{equation}\label{eq:complexity-func}
 K_{\mathrm{func}}(\varepsilon) = \left\lceil 1+ \tfrac{2}{-\ln (\tau)} \ln\left( \tfrac{C}{\varepsilon} \right) \right\rceil
\end{equation}
iterations, we have $h(x^K) - h(\overline{x}) \leq \varepsilon$, where $C$ is defined in Corollary \ref{cor:ppm-rates}.
\end{itemize}
In both cases, the iteration complexity is $O \left(\tfrac{1}{-\ln (\tau)} \ln(1/\varepsilon) \right)$, which is the standard complexity for linearly convergent methods.
\end{corollary}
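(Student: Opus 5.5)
Both parts follow from the explicit bounds in Corollary~\ref{cor:ppm-rates}: we make the geometric right-hand side at most $\varepsilon$ and solve for the index by taking logarithms. Throughout we use $\tau\in\,]0,1[$ from \eqref{eq:tau}, so $-\ln(\tau)>0$ and $\sqrt{\tau}^{\,m}=\exp\bigl(-\tfrac{m}{2}(-\ln\tau)\bigr)$ is strictly decreasing in $m$.

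For $(a)$, we iterate Corollary~\ref{cor:ppm-rates}$(a)$, equivalently use Theorem~\ref{thm:ppm-linear}$(b)$ after taking square roots. This gives $\|x^K-\overline{x}\|\le \tau^{K/2}\|x^0-\overline{x}\|$ for every $K\in\mathbb{N}_0$. The trivial cases come first. If $x^0=\overline{x}$, every iterate equals $\overline{x}$ by the same bound, and nothing needs proof. If $\|x^0-\overline{x}\|\le\varepsilon$, the logarithm in \eqref{eq:complexity-iter} is nonpositive, we read $K=0$, and the claim holds. Otherwise we require $\tau^{K/2}\|x^0-\overline{x}\|\le\varepsilon$. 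Taking logarithms and dividing by $\tfrac12\ln\tau<0$ reverses the inequality and gives
\begin{equation*}
K\ \ge\ \tfrac{2}{-\ln(\tau)}\ln\!\Bigl(\tfrac{\|x^0-\overline{x}\|}{\varepsilon}\Bigr).
\end{equation*}
The ceiling $K_{\mathrm{iter}}(\varepsilon)$ satisfies this. By monotonicity, every $K\ge K_{\mathrm{iter}}(\varepsilon)$ also does, which justifies the phrase ``after at most''.

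For $(b)$, we use Corollary~\ref{cor:ppm-rates}$(b)$, which gives $h(x^K)-h(\overline{x})\le C\tau^{(K-1)/2}$ for $K\ge1$. The case $C=0$ is trivial, since then $x^0=\overline{x}$; so assume $C>0$. Requiring $C\tau^{(K-1)/2}\le\varepsilon$ and taking logarithms as before gives
\begin{equation*}
K\ \ge\ 1+\tfrac{2}{-\ln(\tau)}\ln\!\bigl(\tfrac{C}{\varepsilon}\bigr).
\end{equation*}
The quantity $K_{\mathrm{func}}(\varepsilon)$ in \eqref{eq:complexity-func} satisfies this and is at least $1$ whenever $C>\varepsilon$, so the bound from Corollary~\ref{cor:ppm-rates}$(b)$ applies to it.

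The only real obstacle is the bookkeeping when $C\le\varepsilon$. Then the logarithm is nonpositive and the formula may return $0$ or $1$, while the rate bound in Corollary~\ref{cor:ppm-rates}$(b)$ is stated only for $K\ge1$. For $K=1$ the bound reads $h(x^1)-h(\overline{x})\le C\le\varepsilon$, so that case is fine; the issue arises only if the ceiling evaluates to $K=0$. I would handle it by stating the count as $\max\{1,K_{\mathrm{func}}(\varepsilon)\}$, which changes nothing asymptotically. The final $O\bigl(\tfrac{1}{-\ln\tau}\ln(1/\varepsilon)\bigr)$ statement then follows by absorbing $\ln\|x^0-\overline{x}\|$, $\ln C$, and the additive constants into the constant.
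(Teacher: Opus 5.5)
Your proposal is correct and is exactly the standard argument the paper omits: you invert the geometric bounds of Corollary~\ref{cor:ppm-rates} by taking logarithms and use monotonicity in $K$. Your side remark also checks out: when $C<\varepsilon$ the formula $K_{\mathrm{func}}(\varepsilon)$ can return $0$ (or less), and nothing in the hypotheses bounds $h(x^0)-h(\overline{x})$ by $\varepsilon$, so replacing the count by $\max\{1,K_{\mathrm{func}}(\varepsilon)\}$ is a legitimate small repair that leaves the asymptotic complexity unchanged.
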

%--------------------------%
\begin{remark}
 Theorem \ref{thm:ppm-linear} extends \cite[Theorem 4.3]{LDZ}  from weakly convex to weakly quasar-convex functions. Furthermore, when $\gamma = 1$ (i.e., $h$ is weakly star-convex), our convergence rate in Corollary \ref{cor:ppm-rates} is exactly as good as \cite[Theorem 4.3]{LDZ} ($\tau = (1+(\mu-\rho)\beta')^{-1}$ in our case and $\tau= (1+c_{\min}(\mu_q-\rho))^{-1}$ in their case). 
 \qed
\end{remark}

\subsection{Computable Complexity via the Proximal Residual}

The complexity results of Corollaries~\ref{cor:ppm-rates} and~\ref{cor:ppm-complexity} are stated in terms of $\|x^0-\overline{x}\|$ or the constant $C$, both of which require knowledge of the (typically unknown) minimizer $\overline{x}$. In this subsection, we derive a complementary, fully computable stopping criterion based on the \emph{proximal residual}, together with its own iteration complexity, and we show it is consistent with the sub-optimality complexity already established.

%________________________%
%       Definition        %
%________________________%
\begin{definition}[Proximal Residual]\label{def:prox-residual}
 Let $h:\mathbb{R}^n\to\overline{\mathbb{R}}$ be proper, lower semicontinuous, and $\beta>0$. For $x \in \mathbb{R}^n$ and $x^{*} \in \prox_{\beta h}(x)$, the proximal residual at $x$ (associated with $x^*$) is defined by
\begin{equation*}
 r_{\beta}(x) := \tfrac{1}{\beta}\left(x - x^{*} \right).
\end{equation*}
\end{definition}

Unlike $\|x-\overline{x}\|$, the quantity $\| r_{\beta} (x)\|$ is observable during the execution of Algorithm \ref{Alg: PPM}, since along the PPM trajectory $x^{k+1} \in \prox_{\beta_k h}(x^k)$ we have simply $r_{\beta_k}(x^k) = \tfrac{1}{\beta_k}(x^k - x^{k+1})$, computable from two consecutive iterates without any knowledge of $\overline{x}$.

We first relate the residual to sub-optimality, under the local quadratic growth condition \eqref{eq:QG} already used in Corollary~\ref{cor:ppm-rates}.

%________________________%
%       Proposition       %
%________________________%
\begin{proposition}[Proximal-PL-type Inequality]\label{prop:prox-PL}
 Let $h: \mathbb{R}^n \to \overline{\mathbb{R}}$ be proper, lower semicontinuous, $(\rho, \gamma)$-weakly quasar-convex with respect to $\overline{x} \in \argmin_{\mathbb{R}^n} h$, and satisfy the $\mu$-quadratic growth condition \eqref{eq:QG} at $\overline{x}$ globally on $\mathbb{R}^n$ (i.e., for every $\zeta>0$), with $\mu > \rho$. Let $\beta > 0$, $x \in \mathbb{R}^n$, and $x^{*} \in \prox_{\beta h} (x)$. Then,
\begin{equation}\label{eq:prox-PL}
 h(x^{*}) - h(\overline{x})   \leq   \tfrac{2\mu}{\gamma^{2}(\mu-\rho)^{2}} \|r_{\beta}(x)\|^{2}.
\end{equation}
\end{proposition}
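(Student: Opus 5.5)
Set $r := r_\beta(x)$, so that $x^* - x = -\beta r$. The plan is to combine Proposition~\ref{prop:quasar-prox-char}, with $\mathcal{K}=\mathbb{R}^n$ and $z=x$, and the global quadratic growth condition at $x^*$; note that $x^*$ lies in every sublevel set, so \eqref{eq:QG} applies to it. Proposition~\ref{prop:quasar-prox-char} gives
\begin{equation*}
 h(x^*) - h(\overline{x}) \leq \tfrac{1}{\gamma\beta}\langle x^*-x, \overline{x}-x^*\rangle + \tfrac{\rho}{2}\|x^*-\overline{x}\|^2 = \tfrac{1}{\gamma}\langle r, x^*-\overline{x}\rangle + \tfrac{\rho}{2}\|x^*-\overline{x}\|^2 .
\end{equation*}
Write $\delta := h(x^*) - h(\overline{x}) \ge 0$ and $d := \|x^*-\overline{x}\|$. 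Cauchy--Schwarz then yields
\begin{equation*}
 \delta \leq \tfrac{1}{\gamma}\|r\|\, d + \tfrac{\rho}{2} d^2 .
\end{equation*}

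The next step uses quadratic growth, $\tfrac{\mu}{2}d^2 \le \delta$, in two ways.

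\textbf{Bound on $d$.} Split $\delta = \tfrac{\rho}{\mu}\delta + \tfrac{\mu-\rho}{\mu}\delta$ and use $\tfrac{\rho}{2}d^2 \le \tfrac{\rho}{\mu}\delta$. The displayed inequality then gives $\tfrac{\mu-\rho}{\mu}\delta \le \tfrac{1}{\gamma}\|r\| d$. Combining this with $\tfrac{\mu}{2}d^2 \le \delta$ gives $\tfrac{\mu-\rho}{2} d^2 \le \tfrac{1}{\gamma}\|r\| d$, that is,
\begin{equation*}
 d \le \tfrac{2\|r\|}{\gamma(\mu-\rho)} .
\end{equation*}

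\textbf{Bound on $\delta$.} Substitute the bound on $d$ back into $\tfrac{\mu-\rho}{\mu}\delta \le \tfrac{1}{\gamma}\|r\| d$:
\begin{equation*}
 \tfrac{\mu-\rho}{\mu}\,\delta \le \tfrac{2\|r\|^2}{\gamma^2(\mu-\rho)} ,
\end{equation*}
which is exactly
\begin{equation*}
 \delta \le \tfrac{2\mu}{\gamma^2(\mu-\rho)^2}\|r\|^2 ,
\end{equation*}
the claimed inequality \eqref{eq:prox-PL}. The case $d=0$ is trivial, since then $\delta=0$ as well.

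The main point needing care is the split $\delta = \tfrac{\rho}{\mu}\delta + \tfrac{\mu-\rho}{\mu}\delta$: it absorbs the $\tfrac{\rho}{2}d^2$ term into $\delta$ instead of bounding $d^2$ crudely, and it is precisely what produces the constant $\tfrac{2\mu}{\gamma^2(\mu-\rho)^2}$. Everything else is direct substitution. The only other requirement is that quadratic growth be usable at $x^*$. This is why the statement assumes \eqref{eq:QG} globally; alternatively, one could note that $h(x^*) \le h(x)$ by \eqref{eq:quasar-prox-descent}, so local growth suffices whenever $x$ lies in the sublevel set.
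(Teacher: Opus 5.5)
Your proof is correct and follows essentially the paper's route: Proposition~\ref{prop:quasar-prox-char} with $z=x$, then Cauchy--Schwarz, then quadratic growth at $x^*$ to get $\|x^*-\overline{x}\| \le \tfrac{2}{\gamma(\mu-\rho)}\|r\|$, then substitution back. The only difference is cosmetic: the paper plugs the bound on $d$ into both terms of $\delta \le \tfrac{1}{\gamma}\|r\|d + \tfrac{\rho}{2}d^2$ and obtains the same constant $\tfrac{2}{\gamma^2(\mu-\rho)} + \tfrac{2\rho}{\gamma^2(\mu-\rho)^2} = \tfrac{2\mu}{\gamma^2(\mu-\rho)^2}$, so your split is convenient but not what produces the constant. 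Also, $x^*$ lies in \emph{some} sublevel set (any $\zeta \ge h(x^*)-h(\overline{x})$), not in every one.
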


%------------------%
\begin{proof}
Taking $z=x$ in Proposition \ref{prop:quasar-prox-char}, we obtain
\begin{equation*}
 h(x^{*}) - h(\overline{x})   \leq   \tfrac{1}{\gamma\beta}\langle x^{*}-x, \overline{x}-x^{*}\rangle + \tfrac{\rho}{2} \|x^{*}-\overline{x}\|^{2}.
\end{equation*}
By definition, $x^{*}-x = -\beta  r_{\beta}(x)$, so, applying the Cauchy-Schwarz inequality,
\begin{equation*}
 \tfrac{1}{\gamma\beta}\langle x^{*}-x, \overline{x}-x^{*}\rangle = -\tfrac{1}{\gamma} \langle r_{\beta}(x), \overline{x}-x^{*} \rangle   \leq   \tfrac{1}{\gamma} \|r_{\beta}(x)\| \|x^{*}-\overline{x}\|.
\end{equation*}
Combining the two displays,
\begin{equation}\label{eq:prox-PL-step1}
 h(x^{*}) - h(\overline{x})   \leq   \tfrac{1}{\gamma} \|r_{\beta}(x)\| \|x^{*}-\overline{x}\| + \tfrac{\rho}{2} \|x^{*}-\overline{x}\|^{2}.
\end{equation}
On the other hand, by the $\mu$-local quadratic growth condition \eqref{eq:QG} applied at $x^{*}$, we have $h(x^{*}) - h(\overline{x})   \geq   \tfrac{\mu}{2}\|x^{*}-\overline{x}\|^{2}$. Combining this lower bound with \eqref{eq:prox-PL-step1} gives
\begin{equation*}
 \tfrac{\mu}{2} \|x^{*}- \overline{x}\|^{2}   \leq   \tfrac{1}{\gamma} \|r_{\beta}(x)\| \|x^{*}-\overline{x}\| + \tfrac{\rho}{2} \|x^{*}-\overline{x} \|^{2}.
\end{equation*}
If $x^{*}= \overline{x}$, then \eqref{eq:prox-PL} holds trivially since $h(x^*) - h(\overline x) = 0$. Otherwise, dividing by $\|x^{*}- \overline{x}\| >0$ and rearranging, using $\mu > \rho$, we get $\|x^{*} - \overline{x}\|   \leq   \tfrac{2}{\gamma (\mu - \rho)} \|r_{\beta}(x)\|$. Substituting this bound back into \eqref{eq:prox-PL-step1},
\begin{align*}
 h(x^{*}) - h(\overline{x}) & \leq \tfrac{1}{\gamma} \|r_{\beta}(x)\| \tfrac{2}{ \gamma (\mu-\rho)} \|r_{\beta}(x)\| + \tfrac{\rho}{2} \left(\tfrac{2}{\gamma (\mu-\rho)} \right)^{2} \|r_{\beta}(x)\|^{2} \\
 & = \tfrac{2}{\gamma^{2} (\mu-\rho)} \|r_{\beta}(x) \|^{2} + \tfrac{2 \rho}{\gamma^{2} (\mu-\rho)^{2}} \|r_{\beta}(x) \|^{2} \\
 & = \tfrac{2 \mu}{\gamma^{2} (\mu-\rho)^{2}} \| r_{\beta} (x)\|^{2},
\end{align*}
which is \eqref{eq:prox-PL}.
\end{proof}
%------------------%

We now show that the PPM iterates drive the residual to zero linearly, at the same rate $\sqrt{\tau}$ already established for the iterate distance and function values.

%________________________%
%       Corollary        %
%________________________%
\begin{corollary}[Residual Convergence Rate] \label{cor:ppm-residual-rate}
 Under the same assumptions as Corollary \ref{cor:ppm-rates}, let $\{x^k\}_{k}$ be generated by Algorithm \ref{Alg: PPM} with $\beta_k \equiv \beta'$ satisfying \eqref{param:cond-2}, and set $$r^{k} := r_{\beta'}(x^{k}) = \tfrac{1}{\beta'} (x^k - x^{k+1}).$$ 
 Then,
\begin{equation}\label{eq:residual-rate}
 \|r^{k}\|   \leq   \tfrac{1+\sqrt{\tau}}{\beta'}  (\sqrt{\tau})^{k}   \|x^0-\overline{x}\|, \qquad \forall   k \in\mathbb{N}_0.
\end{equation}
\end{corollary}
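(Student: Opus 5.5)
The proof is a direct consequence of the $Q$-linear contraction in Theorem~\ref{thm:ppm-linear}$(b)$ (equivalently Corollary~\ref{cor:ppm-rates}$(a)$), combined with the triangle inequality. Since $\beta_k \equiv \beta'$, the residual along the trajectory is exactly $r^k = \tfrac{1}{\beta'}(x^k - x^{k+1})$. Its norm is therefore controlled by the distances of two consecutive iterates to the minimizer $\overline{x}$.

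First, because $x^0 \in \mathcal{L}_{\zeta}^h(\mathbb{R}^n,\overline{x})$, Theorem~\ref{thm:ppm-linear}$(a)$ keeps the whole sequence in the sublevel set. Part $(b)$ then gives
\[
\|x^k - \overline{x}\| \leq (\sqrt{\tau})^{k}\|x^0-\overline{x}\| \quad \text{for every } k \in \mathbb{N}_0 .
\]
Second, I would insert $\overline{x}$ and apply the triangle inequality:
\[
\|r^k\| = \tfrac{1}{\beta'}\|x^k - x^{k+1}\| \leq \tfrac{1}{\beta'}\bigl(\|x^k-\overline{x}\| + \|x^{k+1}-\overline{x}\|\bigr).
\]
Third, substituting the two geometric bounds gives
\[
\|r^k\| \leq \tfrac{1}{\beta'}\bigl((\sqrt{\tau})^{k} + (\sqrt{\tau})^{k+1}\bigr)\|x^0-\overline{x}\| = \tfrac{1+\sqrt{\tau}}{\beta'}(\sqrt{\tau})^{k}\|x^0-\overline{x}\|,
\]
which is \eqref{eq:residual-rate}.

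There is no real obstacle. The one point to check is that the constant step $\beta_k\equiv\beta'$ satisfies \eqref{param:cond-2}, so that $\tau$ from \eqref{eq:tau} is the right contraction factor. It does, and the bound then holds for all $k\in\mathbb{N}_0$, not just for large $k$.

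Optionally, one could sharpen the estimate using the contraction inequality \eqref{eq:contraction}, which also controls $\|x^{k+1}-x^k\|^2 \leq \|x^k-\overline{x}\|^2$. That gives the even better bound $\|r^k\|\leq \tfrac{1}{\beta'}(\sqrt{\tau})^k\|x^0-\overline{x}\|$, but the stated bound only needs the triangle-inequality argument.
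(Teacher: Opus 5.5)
Your proof is correct and is exactly the paper's argument: the triangle inequality through $\overline{x}$, combined with the $Q$-linear bound $\|x^k-\overline{x}\|\le(\sqrt{\tau})^k\|x^0-\overline{x}\|$ from Corollary~\ref{cor:ppm-rates}$(a)$. Your optional sharpening is also valid. The first inequality in the derivation of \eqref{eq:contraction} gives $\|x^{k+1}-x^k\|^2\le\|x^k-\overline{x}\|^2-(1+(\mu-\rho)\gamma\beta')\|x^{k+1}-\overline{x}\|^2\le\|x^k-\overline{x}\|^2$, which removes the factor $1+\sqrt{\tau}$.
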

%------------------%
\begin{proof}
By the triangle inequality and Corollary~\ref{cor:ppm-rates}$(a)$,
\begin{equation*}
 \|r^{k}\| = \tfrac{1}{\beta'}\|x^{k}-x^{k+1}\| \leq \tfrac{1}{\beta'} \Big(\|x^{k}-\overline{x}\| + \|x^{k+1} - \overline{x}\| \Big) \leq \tfrac{1}{\beta'} (\tau^{k/2} + \tau^{(k+1)/2}) \|x^{0}-\overline{x}\|,
\end{equation*}
which simplifies to \eqref{eq:residual-rate}.
\end{proof}

%------------------%
%________________________%
%       Corollary        %
%________________________%
\begin{corollary}[Residual-Based Iteration Complexity]\label{cor:ppm-residual-complexity}
 Under the same assumptions as Corollary \ref{cor:ppm-rates}, let $\{x^k\}_{k}$ be generated by Algorithm \ref{Alg: PPM} with $\beta_k \equiv \beta'$ satisfying \eqref{param:cond-2}. Then for any $\varepsilon > 0$, after at most
\begin{equation}\label{eq:complexity-residual}
 K_{\mathrm{res}}(\varepsilon) = \left\lceil \tfrac{2}{-\ln (\tau)}\ln\left(\tfrac{(1+\sqrt{\tau}) \|x^0-\overline{x}\|}{\beta'\varepsilon} \right) \right\rceil
\end{equation}
iterations, we have $\|r^{K}\| \leq \varepsilon$; this criterion is computable online, without knowledge of $\overline{x}$, from $\|x^K-x^{K+1}\|$ alone. Moreover, if the assumptions of Proposition~\ref{prop:prox-PL} hold, then $\|r^K\|\leq\varepsilon$ implies
\begin{equation*}
 h(x^{K+1}) - h(\overline{x})   \leq   \tfrac{2\mu}{\gamma^{2}(\mu-\rho)^{2}} \varepsilon^{2}.
\end{equation*}
In both cases, the iteration complexity is $O\left(\tfrac{1}{-\ln (\tau)}\ln(1/\varepsilon)\right)$, matching the order already established in Corollary~\ref{cor:ppm-complexity}.
\end{corollary}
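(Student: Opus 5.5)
The plan is to derive both claims from Corollary~\ref{cor:ppm-residual-rate} and Proposition~\ref{prop:prox-PL}, so no new estimate is needed. First, the complexity bound. By Corollary~\ref{cor:ppm-residual-rate}, with $\beta_k\equiv\beta'$,
\[
\|r^k\|\le \tfrac{1+\sqrt{\tau}}{\beta'}(\sqrt{\tau})^{k}\|x^0-\overline{x}\| .
\]
It therefore suffices to find $K$ with $\tfrac{1+\sqrt{\tau}}{\beta'}\tau^{K/2}\|x^0-\overline{x}\|\le\varepsilon$. Taking logarithms, and using $\tau\in]0,1[$ so that $-\ln(\tau)>0$, this condition is equivalent to
\[
K\ge \tfrac{2}{-\ln(\tau)}\ln\Big(\tfrac{(1+\sqrt{\tau})\|x^0-\overline{x}\|}{\beta'\varepsilon}\Big).
\]
Hence $K=K_{\mathrm{res}}(\varepsilon)$ works. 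If the logarithm is nonpositive, then $K=0$ already satisfies the bound, so the ceiling formula remains valid (reading it as $\max\{0,\cdot\}$). Computability is immediate, since $r^K=\tfrac{1}{\beta'}(x^K-x^{K+1})$ involves only iterates.

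Second, the sub-optimality implication. Apply Proposition~\ref{prop:prox-PL} with $\beta=\beta'$, $x=x^K$ and $x^*=x^{K+1}\in\prox_{\beta' h}(x^K)$, so that $r_{\beta'}(x^K)=r^K$. Under its hypotheses (global quadratic growth, $\mu>\rho$), this gives
\[
h(x^{K+1})-h(\overline{x})\le \tfrac{2\mu}{\gamma^2(\mu-\rho)^2}\|r^K\|^2\le \tfrac{2\mu}{\gamma^2(\mu-\rho)^2}\varepsilon^2 .
\]
Global QG is used here only at $x^{K+1}$. Since Theorem~\ref{thm:ppm-linear}(a) keeps the iterates in $\mathcal{L}_\zeta^h$, local QG would in fact suffice; I would note this, but invoke the proposition as stated.

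Finally, the order statement. Write $K_{\mathrm{res}}(\varepsilon)=\tfrac{2}{-\ln\tau}\bigl(\ln(1/\varepsilon)+\text{const}\bigr)$ up to rounding, with the constant depending on $\tau$, $\beta'$ and $\|x^0-\overline{x}\|$. This is $O\big(\tfrac{1}{-\ln\tau}\ln(1/\varepsilon)\big)$, the same order as in Corollary~\ref{cor:ppm-complexity}. To reach function accuracy $\delta$ through the residual, one sets $\varepsilon=\sqrt{\delta\gamma^2(\mu-\rho)^2/(2\mu)}$. Then $\ln(1/\varepsilon)=\tfrac12\ln(1/\delta)+O(1)$, so the order is preserved.

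The only delicate points are bookkeeping: the case where the logarithm is negative, and the index shift, since the residual at $K$ controls $h(x^{K+1})$ rather than $h(x^K)$. Neither is a real obstacle.
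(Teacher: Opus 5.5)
Your proposal is correct and follows the paper's proof essentially verbatim: you solve $\tfrac{1+\sqrt{\tau}}{\beta'}\tau^{K/2}\|x^0-\overline{x}\|\le\varepsilon$ for $K$ using Corollary~\ref{cor:ppm-residual-rate}, note that the residual is observable from consecutive iterates, and apply Proposition~\ref{prop:prox-PL} at $x=x^K$, $x^*=x^{K+1}$. Your extra remarks are also correct and slightly sharpen the statement: the nonpositive-logarithm case, and the observation that local quadratic growth already suffices because $x^{K+1}\in\mathcal{L}_{\zeta}^h(\mathbb{R}^n,\overline{x})$ by Theorem~\ref{thm:ppm-linear}$(a)$.
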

%------------------%
\begin{proof}
 The bound \eqref{eq:complexity-residual} follows from \eqref{eq:residual-rate}, requiring $\tfrac{1 + \sqrt\tau}{\beta'}\tau^{K/2}\|x^0-\overline x\| \leq \varepsilon$, and solving for $K$ exactly as in the derivation of Corollary \ref{cor:ppm-complexity}. The observability of $\|r^K\| = \tfrac1{\beta'} \|x^K - x^{K+1}\|$ from consecutive iterates is immediate from Definition \ref{def:prox-residual}. The final sub-optimality bound follows directly from Proposition \ref{prop:prox-PL}, applied at $x=x^{K}$, $x^{*}=x^{K+1}$.
\end{proof}

%------------------%
\begin{remark}
 Corollary \ref{cor:ppm-residual-complexity} recovers a complexity of the same order $O \big( \tfrac{1}{-\ln (\tau)}\ln(1/\varepsilon)\big)$ as Corollary \ref{cor:ppm-complexity}, confirming consistency between the two criteria, but with an essential practical advantage: the stopping rule $\|x^K-x^{K+1}\| \leq \beta' \varepsilon$ can be checked by the algorithm itself during execution, whereas the criteria of Corollary \ref{cor:ppm-complexity} require an a priori estimate of $\|x^0-\overline{x}\|$ or of the constant $C$, both of which depend on the unknown minimizer $\overline{x}$.
 \qed
\end{remark}

The following lemma guarantees that both the weak quasar-convexity and the local quadratic growth condition are preserved under such a quadratic perturbation.
\begin{lemma}\label{lem:quadratic-perturbation}
Let $\phi: \mathbb{R}^n \to \overline{\mathbb{R}}$ be $(\rho, \gamma)$-weakly quasar-convex with respect to $\overline{x} \in \argmin_{\mathbb{R}^n} \phi$, and suppose $\phi$ satisfies the $\mu$-local Quadratic Growth (QG) condition at $\overline{x}$ with parameter $\zeta > 0$. Then, for any $L \geq 0$, the perturbed function $\psi(x) := \phi(x) + L\|x - \overline{x}\|^2$ is $(\rho_L, \gamma)$-weakly quasar-convex with respect to $\overline{x}$, and satisfies the $\mu_L$-local QG condition at $\overline{x}$ with the same parameter $\zeta$, where
\begin{equation*}
    \rho_L = \max\left\{0,   \rho - 2L\left(\frac{2}{\gamma} - 1\right)\right\} \quad \text{and} \quad \mu_L = \mu + 2L.
\end{equation*}
\end{lemma}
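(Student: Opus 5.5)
The plan is to verify both claims directly from the definitions, in the spirit of Proposition~\ref{prop:weak-quasar-shift}. The key is that the perturbation $L\|x-\overline{x}\|^2$ behaves exactly like the shift used there, but with the opposite effect: it absorbs nonconvexity instead of creating it.

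\textbf{Minimizer and setup.} First note that $\psi(\overline{x})=\phi(\overline{x})$. Also $\psi(x)-\psi(\overline{x})=(\phi(x)-\phi(\overline{x}))+L\|x-\overline{x}\|^2\ge 0$, so $\overline{x}\in\argmin_{\mathbb{R}^n}\psi$ and the base point is valid. Fix $x\in\mathbb{R}^n$ and $\lambda\in[0,1]$, put $z_\lambda:=\lambda\overline{x}+(1-\lambda)x$ and $r:=\|x-\overline{x}\|$, so that $\|z_\lambda-\overline{x}\|^2=(1-\lambda)^2r^2$. If $x\notin\dom(\phi)$ the inequality is vacuous under extended arithmetic, so assume $x\in\dom(\phi)$.

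\textbf{Weak quasar-convexity of $\psi$.} Apply \eqref{eq:quasar-wconvex} for $\phi$ and add $L(1-\lambda)^2r^2$ to both sides. Rewriting the right-hand side through $\psi(x)=\phi(x)+Lr^2$ leaves the quadratic coefficient
\[
\tfrac{\gamma\rho}{2}\lambda\Bigl(1-\tfrac{\lambda}{2-\gamma}\Bigr)+L\bigl((1-\lambda)^2-(1-\gamma\lambda)\bigr).
\]
By the identity already used in Proposition~\ref{prop:weak-quasar-shift}, $(1-\lambda)^2-(1-\gamma\lambda)=-(2-\gamma)\lambda\bigl(1-\tfrac{\lambda}{2-\gamma}\bigr)$. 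Hence the coefficient equals
\[
\tfrac{\gamma}{2}\lambda\Bigl(1-\tfrac{\lambda}{2-\gamma}\Bigr)\Bigl(\rho-2L\bigl(\tfrac{2}{\gamma}-1\bigr)\Bigr).
\]
Since $\lambda\bigl(1-\tfrac{\lambda}{2-\gamma}\bigr)\ge 0$ for $\lambda\in[0,1]$ and $\gamma\in]0,1]$, the last factor can be replaced by its positive part $\rho_L$. This yields exactly the $(\rho_L,\gamma)$-inequality for $\psi$.

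\textbf{Local QG for $\psi$.} Let $x\in\mathcal{L}^{\psi}_{\zeta}(\mathbb{R}^n,\overline{x})$. Then $\phi(x)\le\phi(\overline{x})+\zeta-Lr^2\le\phi(\overline{x})+\zeta$, so $x$ lies in the $\zeta$-level set of $\phi$ and the QG condition for $\phi$ applies there. Adding $Lr^2$ to both sides gives $\psi(x)-\psi(\overline{x})\ge\tfrac{\mu+2L}{2}r^2$, which is the $\mu_L$-local QG condition with the same $\zeta$.

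The only delicate step is the sign bookkeeping in the coefficient. One must notice that the factor multiplying $L$ is a negative multiple of the same weight $\lambda\bigl(1-\tfrac{\lambda}{2-\gamma}\bigr)$; this is what allows the clean factorization, and its nonnegativity is what justifies truncating at $0$ in $\rho_L$.
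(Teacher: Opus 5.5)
Your proposal is correct and follows essentially the same route as the paper. You check $\overline{x}\in\argmin\psi$, get $\mu_L=\mu+2L$ by placing $\mathcal{L}^{\psi}_{\zeta}$ inside $\mathcal{L}^{\phi}_{\zeta}$, and factor the quadratic coefficient as $\tfrac{\gamma}{2}\lambda\bigl(1-\tfrac{\lambda}{2-\gamma}\bigr)\bigl(\rho-2L(\tfrac{2}{\gamma}-1)\bigr)$ before truncating at $0$ using the nonnegativity of the weight, with your explicit treatment of $x\notin\dom(\phi)$ being a small extra care the paper leaves implicit.
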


\begin{proof}
First, observe that since $\overline{x} \in \argmin_{\mathbb{R}^n} \phi$ and $L \geq 0$, we have $\psi(x) = \phi(x) + L\|x - \overline{x}\|^2 \geq \phi(\overline{x}) = \psi(\overline{x})$ for all $x \in \mathbb{R}^n$. This ensures that $\overline{x} \in \argmin_{\mathbb{R}^n} \psi$, which is a strict requirement for the definition of weak quasar-convexity. We now prove the two properties separately. For any $x \in \mathcal{L}_{\zeta}^{\psi}(\mathbb{R}^n, \overline{x})$, we have 
$$ \phi(x) \leq \psi(x) \leq \psi(\overline{x}) + \zeta = \phi(\overline{x}) + \zeta, $$ 
which implies $x \in \mathcal{L}_{\zeta}^{\phi}(\mathbb{R}^n, \overline{x})$. Therefore, the $\mu$-QG condition for $\phi$ applies to $x$. By the definition of $\psi$, we obtain
\begin{align*}
 \psi(x) - \psi(\overline{x}) &= \big(\phi(x) + L\|x - \overline{x}\|^2\big) - \big(\phi(\overline{x}) + 0\big) = \big(\phi(x) - \phi(\overline{x})\big) + L\|x - \overline{x}\|^2 \\
 & \geq \frac{\mu}{2}\|x - \overline{x}\|^2 + L\|x - \overline{x}\|^2 = \frac{\mu + 2L}{2}\|x - \overline{x}\|^2.
\end{align*}
Thus, $\psi$ satisfies the $\mu_L$-QG condition with $\mu_L = \mu + 2L$ on $\mathcal{L}_{\zeta}^{\psi}(\mathbb{R}^n, \overline{x})$.

Now, let $x_\lambda = \lambda \overline{x} + (1-\lambda)x$ for $\lambda \in [0,1]$. Note that $x_\lambda - \overline{x} = (1-\lambda)(x - \overline{x})$, which implies $\|x_\lambda - \overline{x}\|^2 = (1-\lambda)^2\|x - \overline{x}\|^2$. 
Using the definition of $\psi$ and the $(\rho, \gamma)$-weak quasar-convexity of $\phi$, we obtain
\begin{align*}
    \psi(x_\lambda) &= \phi(x_\lambda) + L\|x_\lambda - \overline{x}\|^2 \\
    &\leq \gamma \lambda \phi(\overline{x}) + (1-\gamma \lambda) \phi(x) + \frac{\gamma \rho}{2} \lambda \left(1 - \frac{\lambda}{2-\gamma}\right)\|x - \overline{x}\|^2 + L(1-\lambda)^2\|x - \overline{x}\|^2.
\end{align*}
Substitute $\phi(\overline{x}) = \psi(\overline{x})$ and $\phi(x) = \psi(x) - L\|x - \overline{x}\|^2$ into the inequality
\begin{align*}
    \psi(x_\lambda) &\leq \gamma \lambda \psi(\overline{x}) + (1-\gamma \lambda) \big(\psi(x) - L\|x - \overline{x}\|^2\big)  + \left[ \frac{\gamma \rho}{2} \lambda \left(1 - \frac{\lambda}{2-\gamma}\right) + L(1-\lambda)^2 \right] \|x - \overline{x}\|^2 \\
    &= \gamma \lambda \psi(\overline{x}) + (1-\gamma \lambda) \psi(x) + B(\lambda)\|x - \overline{x}\|^2,
\end{align*}
where $B(\lambda)$ is the coefficient of $\|x - \overline{x}\|^2$. We simplify $B(\lambda)$ by expanding the terms involving $L$
\begin{align*}
    B(\lambda) &= \frac{\gamma \rho}{2} \lambda \left(1 - \frac{\lambda}{2-\gamma}\right) + L \big( 1 - 2\lambda + \lambda^2 - 1 + \gamma \lambda \big) \\
%    &= \frac{\gamma \rho}{2} \lambda \left(1 - \frac{\lambda}{2-\gamma}\right) + L \big( \lambda^2 - (2-\gamma)\lambda \big) \\
    &= \frac{\gamma \rho}{2} \lambda \left(1 - \frac{\lambda}{2-\gamma}\right) - L \lambda (2-\gamma) \left( 1 - \frac{\lambda}{2-\gamma} \right) \\
%    &= \frac{\gamma}{2} \left[ \rho - \frac{2L(2-\gamma)}{\gamma} \right] \lambda \left( 1 - \frac{\lambda}{2-\gamma} \right) \\
    &= \frac{\gamma}{2} \left[ \rho - 2L\left(\frac{2}{\gamma} - 1\right) \right] \lambda \left( 1 - \frac{\lambda}{2-\gamma} \right).
\end{align*}
For $\lambda \in [0,1]$ and $\gamma \in ]0,1]$, the term $\lambda \left( 1 - \frac{\lambda}{2-\gamma} \right)$ is non-negative because $1 - \frac{\lambda}{2-\gamma} \geq 1 - \frac{1}{2-\gamma} = \frac{1-\gamma}{2-\gamma} \geq 0$. 
Therefore, to satisfy the definition of $(\rho_L, \gamma)$-weak quasar-convexity, we require
\begin{equation*}
    B(\lambda) \leq \frac{\gamma \rho_L}{2} \lambda \left( 1 - \frac{\lambda}{2-\gamma} \right).
\end{equation*}
This holds if and only if $\rho_L \geq \rho - 2L\left(\frac{2}{\gamma} - 1\right)$. Since the weak quasar-convexity modulus must be non-negative by definition, we take $\rho_L = \max\left\{0,   \rho - 2L\left(\frac{2}{\gamma} - 1\right)\right\}$, which completes the proof.
\end{proof}

\begin{remark}
 Lemma \ref{lem:quadratic-perturbation} shows why a naive quadratic perturbation in the function of Example \ref{example:02} cannot by itself yield an instance with $\mu>\rho$ while preserving failure of quasiconvexity: as $L$ grows, $\rho_L$ is driven toward $0$ exactly in the regime needed for $\mu_L>\rho_L$, at which point the perturbed function becomes genuinely (star-)convex. This motivated the alternative, piecewise construction of Example \ref{exam:qg-radial}.
\qed
\end{remark}

Finally, we present an example in which Theorem \ref{thm:ppm-linear} applies, and no other result in the literature does. To that end, we first show the following.

%________________________%
%       Example          %
%________________________%
\begin{example}[A Radial Function with $\mu>\rho$]\label{exam:qg-radial} 
Let $\varphi:\mathbb{R}\to\mathbb{R}$ be the even function defined, for $t\geq0$, by
\begin{equation}\label{eq:phi-def}
\varphi(t) := \begin{cases}
 t^2, & 0\leq t\leq 1, \\[2mm]
 1+2(t-1)-\dfrac18(t-1)^2, & 1\leq t\leq 13, \\[2mm]
 7-(t-13)+(t-13)^2, & t\geq 13,
\end{cases}
\qquad \varphi(-t):=\varphi(t).
\end{equation}
Clearly, $\varphi$ is continuously differentiable on $\mathbb{R}$. Indeed, at $t=1$, both pieces give value $1$ and slope $2$, while at $t=13$, both pieces give value $7$ and slope $-1$, and $\varphi$ is smooth and even at $t=0$. We now record the main properties of $\varphi$.
\begin{itemize}
\item[$(i)$] {\rm Unique global minimizer.} $\varphi (0) = 0$, and $\varphi(t)>0$ for every $t\neq0$: on $[0, 1]$, $\varphi(t)=t^2>0$ for $t>0$; on $[1, 13]$, $\varphi$ increases from $\varphi(1) = 1$ to a strict local maximum $\varphi (9) = 9$ (since $\varphi' (t) = 2-\tfrac14 (t-1)$ vanishes exactly at $t=9$), then decreases to $\varphi(13) = 7$; on $[13, +\infty[$, $\varphi$ decreases further to a local minimum $\varphi(13.5) = \tfrac{27}{4}$ (since $\varphi'(t) = -1+ 2(t-13)$ vanishes at $t=13.5$), then increases to $+\infty$. In particular $\varphi(t) \geq \tfrac{27}{4}>0$ for all $t\geq13$. Hence, $\{0\} = \argmin_{\mathbb{R}} \varphi$.

\item[$(ii)$] {\rm Global weak convexity.} On $]0, 1[$, $\varphi''(t) = 2$; on $]1, 13[$, $\varphi'' (t) = -\tfrac14$; on $]13, + \infty[$, $\varphi'' (t) = 2$, with the corresponding formulas on the negative half-line by evenness. Since $\varphi + \tfrac18 \lvert \cdot \rvert^2$ is continuously differentiable and piecewise quadratic with nonnegative second derivative on each quadratic piece, its derivative is nondecreasing. Hence it is convex. Thus $\varphi$ is $\tfrac14$-weakly convex on $\mathbb{R}$. In particular, it is $(\rho,1)$-weakly quasar-convex with respect to $0$ for every $\rho\geq\tfrac14$, by \eqref{eq:quasar-wconvex}.

\item[$(iii)$] {\rm Local quadratic growth.} Since $\varphi(t) = t^2$ exactly on $[-1, 1]$, and $\varphi(t) \geq 1$ for every $t$ with $\lvert t \rvert \geq 1$ (as $\varphi$ increases on $[1,9]$ from $1$, decreases on $[9,13]$ to $7$, and is bounded below by $\tfrac{27}{4} > 1$ on $[13, + \infty[$), the sublevel set $\{t \in \mathbb{R}:   \varphi(t) \leq \zeta\}$ is contained in $[-1, 1]$ for every $\zeta \in ]0, 1[$, and on that set $\varphi (t) = t^2$ exactly. Hence $\varphi$ satisfies the $\mu$-local quadratic growth condition \eqref{eq:QG} at $\overline x=0$ with $\mu=2$, for every $\zeta \in ]0, 1[$.

\item[$(iv)$] {\rm Failure of (star) quasi-convexity.} The point $t=9$ lies on the segment joining $0$ and $13.5$, and $\varphi(9) = 9 >   \frac{27}{4} = \max\{\varphi(0),\varphi(13.5)\}$, which directly violates quasi-convexity and, in particular, star quasi-convexity with respect to $0$.

\item[$(v)$] {\rm Failure of quasar-convexity, for every $\gamma\in ]0,1]$.} Taking $x=13.5$ and $\lambda=\tfrac13$ (so that $(1-\lambda)x=9$), $\gamma$-quasar-convexity with respect to $0$ would require $\varphi(9)\leq\left(1-\tfrac\gamma3 \right) \varphi(13.5)$. Since $\varphi(13.5)=\tfrac{27}{4}$ and $1-\tfrac\gamma3\leq1$ for every $\gamma\in ]0,1]$, we get $\left(1-\frac\gamma3\right) \varphi (13.5)  \leq  \frac{27}{4}  =  6.75  <  9  =  \varphi(9)$, a contradiction for every $\gamma\in ]0,1]$. Hence $\varphi$ is not $\gamma$-quasar-convex with respect to $0$, for any $\gamma\in ]0,1]$.
\qed
\end{itemize}
%\qed
\end{example}

Now we present the desired example.

%________________________%
%       Example          %
%________________________%
\begin{example}[An Application for PPM] \label{exam:qg-application}
Let $f(s)=2-\sqrt{|s|}$ be as in Example~\ref{exam:large}, and let $\varphi$ be the function of Example~\ref{exam:qg-radial}. Divide $\mathbb{R}^2$ into the four closed regions delimited by the lines $x_2=x_1$ and $x_2=-x_1$, namely
\begin{align*}
\mathcal{A} &:= \left\{(x_1,x_2)\in\mathbb{R}^2:~ x_1 \geq 0,  -x_1 \leq x_2 \leq x_1\right\}, &
\mathcal{B} &:= \left\{(x_1,x_2)\in\mathbb{R}^2:~ x_2 \geq 0,  -x_2 \leq x_1 \leq x_2\right\}, \\
\mathcal{C} &:= \left\{(x_1,x_2)\in\mathbb{R}^2:~ x_1 \leq 0,  x_1 \leq x_2 \leq -x_1\right\}, &
\mathcal{D} &:= \left\{(x_1,x_2)\in\mathbb{R}^2:~ x_2 \leq 0,  x_2 \leq x_1 \leq -x_2\right\}.
\end{align*}
Define $H:\mathbb{R}^2\to\mathbb{R}$ by $H(0,0):=0$, and for $(x_1,x_2)\neq(0,0)$,
\begin{equation*}
H(x_1,x_2) :=
\begin{cases}
\varphi(x_1)~ f\!\left(\dfrac{x_2}{x_1}\right), & (x_1,x_2) \in \mathcal{A}, \\[3mm]
\varphi(x_2)~ f\!\left(\dfrac{x_1}{x_2}\right), & (x_1,x_2) \in \mathcal{B}, \\[3mm]
\varphi(-x_1)~ f\!\left(\dfrac{x_2}{-x_1}\right), & (x_1,x_2) \in \mathcal{C}, \\[3mm]
\varphi(-x_2)~ f\!\left(\dfrac{x_1}{-x_2}\right), & (x_1,x_2) \in \mathcal{D}.
\end{cases}
\end{equation*}
The four formulas agree on the shared boundary rays $x_2=\pm x_1$, so $H$ is well defined and continuous on $\mathbb{R}^2$.

\medskip
\noindent
\emph{Unique global minimizer.} Since $\varphi(s)=0$ if and only if $s=0$ by Example~\ref{exam:qg-radial}$(i)$ and $f>0$ on $[-1,1]$, the same argument as in Example~\ref{exam:large} shows $H(x)=0$ if and only if $x=(0,0)$, thus $\{(0,0)\}=\argmin_{\mathbb{R}^2}H$.

\medskip
\noindent
\emph{Local quadratic growth with $\mu_H=1$.} On $\mathcal{A}$, using $\varphi(t)=t^2$ for $t\in[0,1]$ and $f\geq1$, we have
\begin{equation*}
 H(x_1,x_2) = \varphi(x_1) f\!\left( \frac{x_2}{x_1} \right) \geq \varphi(x_1) \geq x_1^2 \geq \frac12 \|x\|^2, \qquad \forall  (x_1, x_2) \in \mathcal{A},\ 0< x_1 \leq 1,
\end{equation*}
using $|x_2|\leq x_1$ on $\mathcal{A}$. The same bound holds trivially at $(0,0)$. On the other hand, for every $(x_1,x_2)\in \mathcal{A}$ with $x_1 \ge 1$ we have $H(x_1,x_2) \ge \varphi(x_1) \ge 1$, by Example \ref{exam:qg-radial}(iii) and $f\ge1$. Hence, for every $\zeta \in \, ]0, 1[$, the sublevel set $\{H \le \zeta\} \cap \mathcal{A}$ is contained in $\{x_1 < 1\}$, where the bound above holds.

\medskip
\noindent
\emph{Weak quasar-convexity with $\rho_H = \tfrac12$.} On any ray through the origin, $x_2/x_1 = c \in [-1,1]$ constant, so restricted to that ray, $H(t,ct) = f(c) \varphi(t)$ is a positive scalar multiple of $\varphi$, with $f(c) \in [1,2]$. Since $\varphi$ is $\tfrac14$-weakly convex on $\mathbb{R}$ by Example \ref{exam:qg-radial}$(ii)$, multiplying by $f(c)\leq2$ gives that $t \mapsto f(c) \varphi(t)$ is $\tfrac12$-weakly convex, hence $\left(\tfrac12, 1\right)$-weakly quasar-convex with respect to $t=0$, for every $c \in [-1, 1]$. As in Example~\ref{exam:large}$(i)$, this shows $H$ is $\left(\tfrac12, 1\right)$-weakly quasar-convex with respect to $(0, 0)$ on $\mathbb{R}^2$.

\medskip
\noindent
\emph{Not weakly convex for any modulus.} Fix $\rho\geq0$ and, for $s\in ]0,1]$, consider $(1,-s),(1,s)\in\mathbb{R}^2$ and their midpoint $(1,0)$. Since $f$ is even, $H(1,\pm s)=\varphi(1)f(s)$ and $H(1,0)=\varphi(1)f(0)$. As in Example~\ref{exam:large}$(ii)$, weak convexity with modulus $\rho$ would require $\varphi(1)\sqrt{s}\leq\tfrac\rho2 s^2$, using $\varphi(1)=1>0$ and $f(0)-f(s)=\sqrt s$; this fails as $s\to0^+$ for any fixed $\rho\geq0$. Hence, $H$ is not weakly convex on $\mathbb{R}^2$ for any modulus.

\medskip
\noindent
\emph{Neither (star) quasi-convexity nor $\gamma$-quasar-convexity for any $\gamma\in ]0,1]$}. It suffices to consider $H$ restricted to $(t,0)$, $t\geq0$, where $H(t,0)=f(0)\varphi(t)=2\varphi(t)$. Using Example~\ref{exam:qg-radial}$(iv)$--$(v)$, we get
\begin{equation*}
 H(0,0)=0, \qquad H(9,0)=18, \qquad H(13.5,0) = \frac{27}{2}.
\end{equation*}
Since $H (9,0) = 18 > \tfrac{27}{2} = \max\{H(0,0), H(13.5,0)\}$, and $(9,0)$ lies on the segment joining $(0,0)$ and $(13.5,0)$, this violates quasi-convexity and star quasi-convexity with respect to $(0,0)$ (see \cite[Theorem 12]{PQK-L}). Furthermore, taking $x=(13.5,0)$, $\lambda = \tfrac13$, the same computation as in Example \ref{exam:qg-radial}$(v)$ (scaled by the common factor $2$) shows $H(9,0)=18 > \left(1-\tfrac\gamma3 \right)\cdot\tfrac{27}{2}$ for every $\gamma \in   ]0,1]$, so $H$ is not $\gamma$-quasar-convex with respect to $(0,0)$, for any $\gamma \in   ]0,1]$.

\medskip

Therefore, $H$ is $\left(\tfrac12,1\right)$-weakly quasar-convex with respect to its unique global minimizer $\overline{x}=(0,0)$, and satisfies the $1$-local quadratic growth condition \eqref{eq:QG} at $\overline{x}$ for every $\zeta \in ]0, 1[$, with
\begin{equation*}
 \mu_H = 1  >  \tfrac12 = \rho_H,
\end{equation*}
yet $H$ is neither weakly convex, nor quasi-convex, nor star quasi-convex, nor quasar-convex, for any modulus. As a consequence, provided the initial point $x^0$ of the PPM satisfies $H(x^0)<1$ (so that the trajectory remains within the region of validity $\zeta \in ]0, 1[$), Theorem~\ref{thm:ppm-linear} applies to $H$, while no other result in the literature does.
\qed
\end{example}

%%==========================%%
%%       Conclusion         %%
%%==========================%%
\section{Conclusion}\label{sec:Conclusion}

We introduced the class of weakly quasar-convex functions, unifying weak convexity and quasar-convexity under a single $(\rho,\gamma)$-parametrized inequality anchored at a global minimizer. We developed its basic properties and showed that the class strictly contains the union of its two parent notions. We further proved that the Moreau envelope preserves weak quasar-convexity with an explicit modulus update, yielding coercivity and characterization results for the associated proximity operator. Building on this machinery, we established $Q$-linear convergence of the proximal point algorithm under a local quadratic growth condition, together with matching iteration complexity and a computable, minimizer-free stopping rule based on the proximal residual.

\section{Declarations}

\subsection{Availability of supporting data}

 No data sets were generated during the current study. 

\subsection{Author Contributions}

 All authors contributed equally to the study conception, design and implementation and wrote and corrected the manuscript.

\subsection{Conflicts of Interest}

 There are no conflicts of interest or competing interests related to this manuscript.

\subsection{Funding}

 A. Khademi acknowledges financial support from IVADO and the Canada First Research Excellence Fund, and F. Lara from ANID-Chile through project Fondecyt Regular 1241040.

%---------------------%
\bibliographystyle{abbrv}  % or use: plain, abbrv, alpha, unsrt, etc.
\bibliography{bibliography}
%---------------------%

\end{document}